\declaretheorem[name=Theorem]{thm}
\newtheorem{corollary}[thm]{Corollary}
\newtheorem{lemma}[thm]{Lemma}
\newtheorem{question}[thm]{Question}
\newtheorem{example}[thm]{Example}
\newtheorem{definition}[thm]{Definition}
\newtheorem*{intdef}{Definition}
\newtheorem*{claim*}{Claim}
\newtheorem{remark}[thm]{Remark}
\newtheorem{prop}[thm]{Proposition}
\newtheorem{claim}[thm]{Claim}
\newtheorem{obs}[thm]{Observation}
\newenvironment{Example}{\begin{example}\rm}{\end{example}}
\newenvironment{Definition}{\begin{definition}\rm}{\end{definition}}
\newenvironment{Remark}{\begin{remark}\rm}{\end{remark}}
\def\et{\;\mbox{and}\;}
\def\DTC{\mathrm{DTC}}
\DeclareMathOperator{\Ima}{Im}
\def\fd{{\omega}}
\def\for{\quad\mbox{for }}
\def\et{\quad\mbox{and}\quad}
\def\epsilon{\varepsilon}
\def\N{\mathbb{N}}
\def\Q{\mathbb{Q}}
\def\R{\mathbb{R}}
\def\Z{\mathbb{Z}}
\def\C{\mathbb{C}}
\def\D{\mathbb{D}}
\def\Re{\rm{Re}}
\def\G{\mathrm{MCG}}
\begin{document}
\title
{The Dehn twist coefficient for big and small mapping class groups}
\author{Peter Feller}
\author{Diana Hubbard}
\author{Hannah Turner}

\address{Universit\'e de Neuch\^atel, Rue Emile-Argand 11, 2000 Neuch\^atel, Switzerland}
\email{peter.feller@unine.ch}

\address{Brooklyn College CUNY, 2900 Bedford Avenue 11210 USA}
\email{diana.hubbard@brooklyn.cuny.edu}

\address{Georgia Institute of Technology}
\email{hannah.turner@math.gatech.edu}

\subjclass[2020]{57K20, 20F36, 20F60}
\keywords{Fractional Dehn twist coefficient, mapping class groups, surfaces of infinite type, big mapping class groups}

\begin{abstract}
We study a quasimorphism, which we call the Dehn twist coefficient (DTC), from the mapping class group of a surface (with a chosen compact boundary component) that generalizes the well-studied fractional Dehn twist coefficient (FDTC) to surfaces of infinite type. Indeed, for surfaces of finite type the DTC coincides with the FDTC. We provide a characterization of the DTC as the unique homogeneous quasimorphism satisfying certain positivity conditions. This characterization is new even for the classical finite-type case and requires minimal input beyond elementary topology.
 
The FDTC has image contained in $\mathbb{Q}$. In contrast to this, we find that for some surfaces of infinite type the DTC has image all of $\mathbb{R}$. To see this we provide a new construction of maps with irrational rotation behavior for some surfaces of infinite type with a countable space of ends or even just one end. In fact, we find that the DTC is the right tool to detect irrational rotation behavior, even for surfaces without boundary.
\end{abstract}

\maketitle

\section{Introduction}

In this paper, a \emph{surface} is a smooth connected oriented $2$-manifold possibly with boundary. Let $\Sigma$
be a surface with at least one compact boundary component $\partial$. The \emph{mapping class group}
$\G(\Sigma) 
$ of $\Sigma$ is the set of isotopy classes of orientation preserving self-diffeomorphisms of $\Sigma$ that fix all boundary components pointwise.\footnote{Later in this paper we also consider the version of the mapping class group where only $\partial$ is fixed, but restrict to the standard version in the introduction for simplicity of exposition.} Denote by $T\in \G(\Sigma) 
$ the positive Dehn twist along a simple closed curve that is parallel to $\partial$.
We further make the standing assumption that $T$ is a non-trivial element in $\G(\Sigma) 
$, that is, we assume that $\G(\Sigma) 
$ is not the trivial group.


\begin{thm}\label{thm:main} There exists a unique homogeneous quasimorphism $\fd\colon \G(\Sigma)
\to\R$
such that $\fd(T)=1$ and $\fd(g)$ is non-negative for all $\partial$-right-veering $g\in\G(\Sigma)$.
\end{thm}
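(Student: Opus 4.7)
I would take $\fd$ to be the Dehn twist coefficient ($\DTC$) constructed earlier in the paper. By its construction via a translation number of an induced action associated with the boundary $\partial$, it will be a homogeneous quasimorphism satisfying $\fd(T)=1$ (because $T$ lifts to a unit translation) and $\fd\geq 0$ on $\partial$-right-veering elements (because right-veering translates precisely to the relevant lifted action having non-negative translation).

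\textbf{Uniqueness.} Let $\fd$ be any homogeneous quasimorphism satisfying $\fd(T)=1$ and $\fd\geq 0$ on $\partial$-right-veering elements, and let $D$ denote its defect. Fix $f\in\G$. For integers $p\in\Z$ and $q\geq 1$, I would apply the positivity condition to both $T^{-p}f^q$ and its inverse $(T^{-p}f^q)^{-1}=f^{-q}T^{p}$. Using homogeneity, $\fd(T^{-p})=-p$ and $\fd(f^q)=q\fd(f)$, so the quasimorphism inequality yields
$$-p+q\fd(f)-D \;\leq\; \fd(T^{-p}f^q) \;\leq\; -p+q\fd(f)+D.$$
If $T^{-p}f^q$ is $\partial$-right-veering, then $\fd(T^{-p}f^q)\geq 0$, which combined with the right inequality gives $\fd(f)\geq p/q - D/q$. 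If instead $(T^{-p}f^q)^{-1}$ is $\partial$-right-veering, then using $\fd(g^{-1})=-\fd(g)$ one has $\fd(T^{-p}f^q)\leq 0$, which combined with the left inequality gives $\fd(f)\leq p/q + D/q$.

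Letting $q\to\infty$ sandwiches $\fd(f)$ between
\[
\alpha(f) := \sup\{p/q : q\geq 1,\ T^{-p}f^q \text{ is } \partial\text{-right-veering}\}
\]
and
\[
\beta(f) := \inf\{p/q : q\geq 1,\ (T^{-p}f^q)^{-1} \text{ is } \partial\text{-right-veering}\}.
\]
Note that this bound is completely independent of $\fd$: it is the same sandwich for any quasimorphism satisfying the hypotheses. Therefore the main obstacle is showing $\alpha(f)=\beta(f)$, since this identity forces $\fd(f)$ to take a single value (which then must equal $\DTC(f)$, by existence). I expect this two-sided approximation property to be established in the paper either as part of the very construction of $\DTC$ as a translation number---where such an identity is essentially built in from the way a translation number is approximated by rational translations---or as a separate technical lemma relating the right-veering elements to the lifted circle action. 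This, rather than the quasimorphism manipulations above, is where I expect the real work to lie, and it is the point at which the proof will genuinely need the infrastructure developed earlier in the paper for both finite-type and infinite-type surfaces.
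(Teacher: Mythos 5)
Your uniqueness manipulation is correct as far as it goes: the quasimorphism estimate does give $\fd(f)\geq p/q-D/q$ whenever $T^{-p}f^q$ is $\partial$-right-veering and $\fd(f)\leq p/q+D/q$ whenever $(T^{-p}f^q)^{-1}$ is, and taking powers (using that right-veering is closed under multiplication, which itself needs the order lemma) lets you push $q\to\infty$. You are also right that the substance lies entirely in showing $\alpha(f)=\beta(f)$. But you have not shown it, and the paper does not actually prove that identity either, so the gap in your proposal is not filled by a lemma you could simply cite. The reason $\alpha(f)=\beta(f)$ would hold is Lemma~\ref{lem:multclosed}, in particular part~\eqref{item:P_I=P_J}: $T(P_J)\subseteq P_I$ for all intervals $I,J$. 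This makes all the single-interval positive cones $P_I$ comparable up to one power of $T$, so the floor $\lfloor f^q\rfloor_I$ varies by at most $1$ as $I$ ranges over $\mathfrak{I}$; taking $k_q=\min_I\lfloor f^q\rfloor_I$, one gets $T^{-k_q}f^q$ right-veering and $(T^{-(k_q+2)}f^q)^{-1}$ right-veering, so the sandwich closes with slack $2/q$. That topological lemma is not a triviality about translation numbers; it is proved in Section~\ref{sec:annular} via an explicit model of the lift of $T$ to the $\pi_1(\partial)$-cover.

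The paper in fact avoids proving $\alpha(f)=\beta(f)$ directly. Instead it proves an abstract uniqueness statement (Proposition~\ref{prop:uniqunessviapositivityongroups}) about any group with a central element $T$ and a left-invariant relation satisfying a coarse totality condition~\eqref{(T)}: if two homogeneous quasimorphisms $f,g$ both equal $1$ on $T$ and are bounded below on positives, assume $g(\alpha)>f(\alpha)$, amplify and $T$-correct to produce $\beta$ with $g(\beta)>0>f(\beta)$, then observe that $f(\beta^{n_0n}T^n)\to-\infty$ forces $\beta^{n_0n}T^n$ out of the positive cone, so by~\eqref{(T)} its $T^K$-corrected inverse is positive, which makes $g$ go to $-\infty$ on a positive sequence --- contradiction. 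This by-contradiction route and your direct sandwich route are genuinely different in structure: the paper's argument never needs the two rational approximations to agree, only the coarse totality~\eqref{(T)}, which is strictly weaker and easier to verify. What you buy with the sandwich formulation is a more concrete description of the forced value $\fd(f)$ as a cut; what you lose is that you now must prove the cut is a point, which is exactly the content you postponed.

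Finally, your existence sketch is off on one point of substance for this paper. You describe $\fd$ as a translation number of an induced circle action. The paper deliberately does not do this: it defines the floor $\lfloor\beta\rfloor=\max\{k\in\Z:T^k\preceq\beta\}$ directly from the left-invariant order on intervals (Claim~\ref{claim}) and homogenizes, precisely so that no hyperbolic metric or universal circle action is invoked. That choice is one of the selling points of the construction, as it is what lets the argument extend verbatim to surfaces of infinite type.
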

We call this quasimorphism $\fd$ the \emph{Dehn twist coefficient (DTC)}. We will see below in Propositions~\ref{prop:charFDTCforSurfaces} and~\ref{prop:charFDTCforSurfacestotal} that Theorem~\ref{thm:main} holds when the set of $\partial$-right-veering elements is replaced by many other sets of positive elements with respect to certain left-invariant relations or orders. 

If $\Sigma$ is of finite type (i.e.~has finitely generated fundamental group), $\G(\Sigma)
$ is the usual mapping class group and the DTC coincides with the well-studied fractional Dehn twist coefficient (FDTC). Theorem~\ref{thm:main} provides a characterization of the FDTC, which to our knowledge is new.
We discuss the FDTC (which can be informally thought of as a measure of twisting about $\partial$) in more detail in the last subsection of the introduction. 
Theorem~\ref{thm:main} also provides a natural extension of the concept of the FDTC to mapping class groups of surfaces of infinite type (i.e.~those with infinitely generated fundamental group), known as \emph{big mapping class groups}. 

The DTC has several features familiar from the FDTC for surfaces of finite type. However,
for big mapping class groups it turns out that one important feature of the FDTC, rationality of the image of $\fd$, no longer needs to hold for the DTC, which is why we drop `fractional' from the name. In fact, the image of the DTC can be all of $\R$. 

\begin{restatable}{thm}{allofR}
\label{thm:allofR}
There exist surfaces $\Sigma$ for which $\fd\colon \G(\Sigma)
\to\R$ is surjective.
\end{restatable}

See Theorem~\ref{thm:allofRprecise} for a more precise version of Theorem~\ref{thm:allofR}, in which we establish surjectivity of  $\fd$ for two infinite classes of surfaces. For the first class, the construction of the mapping classes with prescribed irrational DTC $\lambda$ relies on a homeomorphism, presumably known by Poincar\'e and explicitly constructed by Denjoy, of the circle with rotation number $\lambda+\Z$ that fixes a Cantor set $C$. This yields a big mapping class by embedding the circle into a surface, extending the homeomorphism to the surface and then removing $C$. This construction is known to experts in big mapping class groups (see e.g.~\cite[Proof of Theorem~5.1]{CalegariChen22}), and these mapping classes are referred to as \emph{irrational rotations}
(e.g.~in \cite[Section~4]{BFT}). For the second class, the result depends on a novel explicit construction of mapping classes with prescribed irrational DTC $\lambda$ which we call the \emph{wagon wheel maps of rotation $\lambda$}. These new mapping classes are of independent interest. In fact, we will see that the wagon wheel maps and their capped off versions are relevant examples in the context of a recent conjecture by Bestvina, Fanoni, and Tao~\cite[Conjecture B]{BFT}.

Achieving rational DTC is simpler by comparison. In fact, we show that for every surface of infinite type, every rational number lies in the image of DTC.

\begin{prop}\label{prop:allofQ}
Let $\Sigma$ be a surface of infinite type with a chosen compact boundary component $\partial$. Then the image of $\omega: \G(\Sigma)
\to \mathbb{R}$ contains all rational numbers.
\end{prop}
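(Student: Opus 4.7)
The plan is, given $p/q \in \mathbb{Q}$ with $q \geq 1$, to construct an element $f \in \G$ satisfying $f^q = T^p$ in $\G$. Since $\omega$ is a homogeneous quasimorphism with $\omega(T) = 1$, this forces $q\,\omega(f) = \omega(f^q) = p$, hence $\omega(f) = p/q$, proving the proposition.

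I would build $f$ from a ``twist-then-rotate'' model inside a collar of $\partial$, glued to a rigid finite-order rotation on a $\mathbb{Z}/q$-symmetric subsurface further in. Fix a closed collar $A \cong S^1 \times [0,2]$ of $\partial = S^1 \times \{0\}$ in $\Sigma$ and write $\gamma := S^1 \times \{2\}$. On $A$, set
\[
f(\theta, t) =
\begin{cases}
(\theta + 2\pi (p/q)\,t,\ t), & t \in [0, 1],\\
(\theta + 2\pi p/q,\ t),      & t \in [1, 2],
\end{cases}
\]
so $f|_\partial = \mathrm{id}$ and $f|_\gamma$ is rigid rotation by $2\pi p/q$. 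A direct check yields that $f^q|_A$ is the identity on $S^1 \times [1, 2]$ and the $p$-th power of the Dehn twist on $S^1 \times [0, 1]$, so $f^q|_A$ is isotopic rel $\partial A$ to $T^p|_A$. To extend $f$ past $\gamma$, I would find inside $\Sigma \setminus A^\circ$ a subsurface $V = P \cup W_1 \cup \cdots \cup W_q$, where $P$ is a $(q{+}1)$-holed sphere with boundary $\gamma \cup \delta_1 \cup \cdots \cup \delta_q$ embedded $\mathbb{Z}/q$-symmetrically around $\gamma$, and $W_1, \ldots, W_q$ are pairwise disjoint, pairwise homeomorphic subsurfaces of $\Sigma$ with $\partial W_i = \delta_i$. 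Let $R$ be the natural $\mathbb{Z}/q$-rotation of $V$, rotating $\gamma$ by $2\pi/q$, acting symmetrically on $P$, and sending $W_i \to W_{i+1}$ via fixed homeomorphisms. Define $f := R^p$ on $V$ (which matches the $A$-part on $\gamma$, since $R^p$ restricts there to rotation by $2\pi p/q$) and $f := \mathrm{id}$ on $\Sigma \setminus (A \cup V)$. Then $f^q|_V = \mathrm{id}$ and globally $f^q = T^p$ in $\G$, as required.

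The main obstacle is the existence of the flower $V$ inside $\Sigma$, and this is where the infinite-type hypothesis enters. By the classification of surfaces, $\Sigma$ has either infinite genus or an infinite end space. In the first case, $\Sigma$ contains infinitely many disjoint once-holed genus-one subsurfaces, and any $q$ of them can serve as the $W_i$. In the second case, a pigeonhole argument on the countable collection of homeomorphism types of end-neighborhoods in $\Sigma$ yields $q$ distinct ends with pairwise homeomorphic subsurface neighborhoods (each with one boundary component), which one takes as the $W_i$. In either case, once the $\delta_i$ are chosen one constructs the $(q{+}1)$-holed sphere $P$ between $\gamma$ and the $\delta_i$ by a standard arc-surgery argument in the connected surface $\Sigma$, completing the construction.
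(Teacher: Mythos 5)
Your reduction ``find $f$ with $f^q=T^p$, then homogeneity gives $\fd(f)=p/q$'' is fine as algebra, but the construction of $f$ has a genuine gap, and in fact the whole strategy of taking roots of $T$ cannot work for all infinite-type surfaces. For your map to be well defined, the rotation $R^p$ must glue to the identity along the frontier of the region where it is supported; since $R$ permutes the circles $\delta_i$ and the pieces $W_i$, this forces the pieces $W_i$ to have no frontier other than $\delta_i$, i.e.\ the ``flower'' $A\cup P\cup W_1\cup\dots\cup W_q$ is clopen in $\Sigma$ and hence equals all of $\Sigma$. (In particular your infinite-genus recipe, taking the $W_i$ to be once-holed tori and extending by the identity outside, is inconsistent: that flower is a compact genus-$q$ surface with boundary $\partial$, so it cannot sit inside an infinite-type $\Sigma$ with nonempty complement; and if the $W_i$ are given extra boundary where the complement attaches, either continuity of $f$ fails there or those circles are components of $\partial\Sigma$ that get permuted, which is not allowed for $\G(\Sigma,\partial\Sigma)$.) So what you actually need is a $\Z/q$-symmetric decomposition of $\Sigma$ minus a collar of $\partial$ into a $(q{+}1)$-holed sphere with $q$ pairwise homeomorphic complementary pieces. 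Many infinite-type surfaces admit no such decomposition: e.g.\ a surface of finite genus $g$ with $q\nmid g$ and a Cantor set of ends, or a surface whose end space is $\omega+1$ (a sequence of isolated ends converging to one end), which cannot be split into $q$ homeomorphic clopen pieces. Two further flaws: the dichotomy ``infinite genus or infinite end space'' omits surfaces that are of infinite type only because they have infinitely many boundary components, and the pigeonhole over ``the countable collection of homeomorphism types of end-neighborhoods'' is not valid (there are continuum many homeomorphism types of end spaces, and an end has no canonical neighborhood); even with $q$ homeomorphic disjoint pieces found somewhere in $\Sigma$, the gluing problem above remains.

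The paper avoids all of this by not asking for any symmetry of $\Sigma$ and not taking roots of $T$. It uses a \emph{pure} braid $\beta_q\in B_n$ with $\fd(\beta_q)=q$ (constructed explicitly as powers of $\sigma_1^2\sigma_2^2\cdots\sigma_{n-1}^2$, whose FDTC is $\tfrac{1}{n-2}$ by a direct estimate, not by being a root of the full twist), blows the punctures up to boundary circles $P_1,\dots,P_n$, and then glues \emph{arbitrary, not necessarily homeomorphic} surfaces into these circles; because the extension of the pure braid fixes each $P_i$ pointwise, it extends by the identity over the glued pieces, and the DTC of the resulting map is computed by tracking an arc that stays in the braid region. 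If you want to rescue a rotation-style picture, you would have to keep track of the boundary twists appearing in $R^q$ (a ``rotation'' of a holed disk that is the identity on an extra boundary circle does not have $q$-th power equal to $T^p$ on the nose), which in effect reduces you to exactly the kind of braid computation the paper carries out.
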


In contrast, recall that for a surface of finite type the possible values of the FDTC mod 1 are finite, constrained by the topology of the surface \cite{malyutin2005twist, ItoKawamuro_OpenBookFoliations}. 
The proof of Proposition~\ref{prop:allofQ} uses a family of braids with the desired `twisting' to construct related maps on surfaces of infinite type.
 
Given Theorem~\ref{thm:allofR} and Proposition~\ref{prop:allofQ}, we wonder whether the DTC is always surjective for surfaces of infinite type.

\begin{question} Let $\Sigma$ be any surface of infinite type with a chosen compact boundary component. Is $\omega: \G(\Sigma)
\to \mathbb{R}$ surjective?
\end{question}


We understand Theorem~\ref{thm:main} and Theorem~\ref{thm:allofR} as showing that it is impossible to find a sensible extension of the FDTC to big mapping class groups that preserves the property of rationality.
We note the existing proof that the FDTC is rational for surfaces of finite type relies on the Nielsen-Thurston classification for mapping classes. Theorem~\ref{thm:allofR} may therefore be seen as an indication that an analogue of a Nielsen-Thurston type classification of maps for surfaces of infinite type ought to be quite complicated. We discuss in more detail below how the irrationality of the DTC allows one to measure irrational rotation behavior. 


\subsection*{Application: detecting irrational rotation behavior}

Bestvina-Fanoni-Tao consider a subset of big mapping classes called tame and speculate that tame mapping classes are made up of particular kinds of pieces \cite[Conjecture~B]{BFT}. One such piece, a mapping class with `irrational rotation behavior', is discussed by example: the irrational rotation of the sphere with a Cantor set removed.
In Section~\ref{sec:tameness}, we use the DTC to propose a definition of irrational rotation behavior. {In particular, we define a rotation number (valued in $\R/\Z$) for mapping classes that fix some isolated planar end, or some boundary, and for diffeomorphisms which fix a point.}
 We say such a mapping class or diffeomorphism has \emph{irrational rotation behavior} if the rotation number is irrational.  We show that wagon wheel maps---the new example of a big mapping class in this paper---are 
examples of irrational rotation behavior as we define it; see Proposition~\ref{prop:irrbehforwagonwheels}. 

We also show that our proposed definition of irrational rotation behavior is such that the irrational rotations on the sphere without a Cantor set (and a natural choice of fixed point) and its variants are also all examples with irrational rotation behavior; see Proposition \ref{prop:irrbehforirrarots}. In fact, as one would expect, the irrational rotations constructed from a homeomorphism of the circle with rotation number $r\in \R/\Z\setminus \Q/\Z$ have rotation number $r$.

It is our understanding that there has so far not been a need to formalize `irrational rotation behavior' since no examples beyond the irrational rotations appear in the literature.
Our proof of Theorem~\ref{thm:allofR} changes this since we construct wagon wheel maps with irrational rotation behavior---in fact with any irrational rotation number---for, among others,
the surface with one end, no boundary and infinite genus, and the planar surface with no boundary and countable space of ends with one accumulation point (aka $\R^2\setminus \Z^2$).

\subsection*{Relation to the fractional Dehn twist coefficient and left-orders}

There is a long history of the study of the fractional Dehn twist coefficient (FDTC) associated with a choice of a compact boundary component $\partial$ of a surface $\Sigma$ of finite type. Gabai and Oertel used what they called the \emph{degeneracy slope} of a fibered knot to construct essential laminations in 3-manifolds~\cite{Gabai_EssentialLaminations3Manifolds}. Dividing 1 by the degeneracy slope yields the fractional Dehn twist coefficient as studied by Honda-Kazez-Matic \cite{HondaKazezMatic_RightVeeringI}, where it is used to study contact structures on 3-manifolds, and, in particular, characterize tightness via the notion of right-veering. Work in this vein has continued to be fruitful \cite{HondaKazezMatic_RightVeeringII,ColinHonda_ReebVFs,KazezRoberts_FDTC, ItoKawamuro_OpenBookFoliations}.

In another direction, the fractional Dehn twist coefficient of a braid, also called the \emph{twist number}, has been used to relate properties of a braid to its closure; see \cite{malyutin2005twist, ItoKawamuro_OpenBookFoliations,feller_hubbard,Feller22_sBiforFDTC} among others. Malyutin also characterizes the FDTC using Dehornoy's left-order on the braid group (see Propostion~\ref{prop:CharPropForFDTC}). It is this characterization that inspires our definition of the DTC. In analogy to the Dehornoy order, in Section \ref{sec:totalorders} we 
produce total left-orders (and circular orders) on big mapping class groups and discuss the connection of these orders to the DTC.

Our construction of the DTC $\fd\colon \G(\Sigma)\to\R$, while informed by prior ideas and techniques, is completely self-contained. In the case of surfaces of finite type, it recovers the FDTC.
\begin{corollary}\label{cor:DTC=FDTC}
If $\Sigma$ is of finite type, the \emph{DTC} $\fd\colon \G(\Sigma)\to \R$ equals the \emph{FDTC}.
\end{corollary}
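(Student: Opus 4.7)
The plan is to appeal directly to the uniqueness statement in Theorem~\ref{thm:main}. Since $\fd$ is characterized as the unique homogeneous quasimorphism $\G\to\R$ sending $T$ to $1$ and being non-negative on all $\partial$-right-veering elements, it suffices to verify that in the finite type setting the classical FDTC enjoys these three properties. The equality $\fd=\mathrm{FDTC}$ then follows immediately.

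The three verifications are classical and need only be pointed to, not re-proved. That the FDTC is a homogeneous quasimorphism (of defect at most $1$) with $\mathrm{FDTC}(T)=1$ follows from its standard description as the translation number of a lift to $\widetilde{\mathrm{Homeo}}_+(\partial)\cong\widetilde{\mathrm{Homeo}}_+(S^1)$ of the boundary action of a suitable representative of a mapping class; see e.g.~\cite{HondaKazezMatic_RightVeeringI,malyutin2005twist}. The normalization $\mathrm{FDTC}(T)=1$ is built into this definition because a lift of the positive boundary Dehn twist is translation by $1$. Finally, non-negativity of the FDTC on $\partial$-right-veering elements is precisely the content of \cite[Propositions~3.1 and~3.2]{HondaKazezMatic_RightVeeringI}, which is already recalled earlier in the introduction.

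There is no real obstacle here beyond citing these three facts: once Theorem~\ref{thm:main} is available the corollary is a short bookkeeping argument. The only minor subtlety worth flagging is that Theorem~\ref{thm:main} should apply to both flavors $\G(\Sigma,\partial)$ and $\G(\Sigma,\partial\Sigma)$; this is already accommodated by the uniform formulation of Theorem~\ref{thm:main} for $\G$, and correspondingly the FDTC in the finite type setting is defined for both flavors with the same three characterizing properties.
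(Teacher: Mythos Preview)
Your proposal is correct and follows essentially the same route as the paper: both verify that the classical FDTC is a homogeneous quasimorphism with $\mathrm{FDTC}(T)=1$ that is non-negative on $\partial$-right-veering elements (citing \cite{HondaKazezMatic_RightVeeringI}, \cite{malyutin2005twist}, and \cite{ItoKawamuro_OpenBookFoliations}), and then invoke the uniqueness in Theorem~\ref{thm:main}.
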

This is a rather immediate consequence of the uniqueness in Theorem~\ref{thm:main} and the well-known properties of the FDTC, so we include the proof here.
\begin{proof}[Proof of Corollary~\ref{cor:DTC=FDTC}] The FDTC is a homogeneous function from $\G(\Sigma)$ to $\R$ that 
is $1$ on $T$ 
  and non-negative on $\partial$-right-veering mapping classes~\cite{HondaKazezMatic_RightVeeringI}.
Furthermore, it is a quasimorphism~\cite{ItoKawamuro_OpenBookFoliations} (see also~\cite{malyutin2005twist} for the case of braid groups).
Hence, by Theorem~\ref{thm:main}, it must be equal to the DTC $\fd\colon\G(\Sigma)\to\R$.
\end{proof}

Theorem~\ref{thm:main} not only immediately yields that the DTC is equal to the FDTC, but as we previously noted, it gives a new characterization of the FDTC. In particular, our proof of the existence and uniqueness allows for a construction of the FDTC that does not require a choice of hyperbolic metric or the use of the Nielsen-Thurston classification for mapping class groups. As mentioned previously, the DTC shares many properties with the FDTC.
For example, $\fd$ has defect at most $1$
; see Proposition~\ref{prop:charFDTCforSurfaces}. Also, $\fd(g)>0$ implies that $g$ is $\partial$-right veering (see Corollary~\ref{cor:fd(g)>0=>gisrightveering}), generalizing the corresponding statement for the FDTC~\cite[Proposition 3.1 and~3.2]{HondaKazezMatic_RightVeeringI}.  Due to our new characterization, this last property and others get a topological proof free of geometry or the Nielsen-Thurston classification. In contrast to this, given our Theorem~\ref{thm:allofR}, the use of the Nielsen-Thurston classification in establishing the rationality of the FDTC (see~\cite{HondaKazezMatic_RightVeeringI, ItoKawamuro_OpenBookFoliations}) might be essential.

\subsection*{Organization} We describe sufficient conditions for a group with a relation to admit a unique homogeneous quasimorphism to $\R$ in Section~\ref{sec:qmviafloors}. Using this algebraic setup, we prove Theorem \ref{thm:main} in Section~\ref{sec:MCG}, leaving some technical details to Section~\ref{sec:annular}. These technical details amount to an explicit description of the lift of a self-homeomorphism of a surface to its annular cover. 
In Section~\ref{sec:irrationality}, we turn our attention towards realizing irrational numbers in the image of the DTC on big mapping class groups. The key point is explicit constructions of the wagon wheel maps and irrational rotations, which we employ to prove the hard part of Theorem~\ref{thm:allofR}: the realization of irrational numbers. The proof of the portion of Theorem~\ref{thm:allofR} about achieving all rational numbers is left to Section~\ref{section:rationals}. In Section~\ref{sec:tameness}, we discuss the wagon wheel maps in the context of a conjecture from~\cite{BFT}, we propose a definition of irrational rotation behavior, and check that wagon wheel maps have irrational rotation behavior. In Section~\ref{sec:totalorders}, we end the paper by discussing total orders on big mapping class groups and provide a variant of Theorem~\ref{thm:main} with the positive cone of certain total orders in place of the set of $\partial$-right veering elements.
\subsection*{Acknowledgements} This project originated from discussions at ICERM during the semester program  `Braids.' We thank ICERM for making the collaboration possible and supporting our work. PF was supported by the SNSF
grant 181199. DH was supported by NSF grant 2213451 and PSC-CUNY grants 64412-00 52 and 65473-00 53.
HT was supported by NSF grant 1745583 (Georgia Tech), and DMS-1929284 (ICERM). We would like to thank the anonymous referee for many detailed comments and suggestions which improved the paper. We also acknowledge helpful conversations with Mladen Bestvina, Federica Fanoni, Florian Stecker, and Nicholas Vlamis.

 \section{Existence and uniqueness of homogeneous quasimorphisms that are positive on positive elements}\label{sec:qmviafloors}
In this section, we describe an algebraic setup that guarantees the existence and uniqueness of homogeneous quasimorphisms with certain positivity properties. We understand this as a generalization of the characterization of the FDTC for the braid group described in the next subsection. We feel justified in providing this somewhat abstract setup since this allows us to provide a uniqueness and existence result that provides the natural extension of the FDTC to all big mapping class groups with rather little explicit topological work; see Section~\ref{sec:MCG}. 

We recall that a \emph{quasimorphism} on a group $G$ is a function $f\colon G \to\R$ such that
$\sup_{a,b\in G}|f(ab)-f(a)-f(b)|<\infty$,
where $D_f\coloneqq\sup_{a,b\in G}|f(ab)-f(a)-f(b)|$ is known as the \emph{defect} of $f$.
A function $f\colon G\to\R$ is said to be \emph{homogeneous}
if $f(g^k)=kf(g)$ for all $g\in G$ and $k\in\Z$. 
\subsection*{Characterizing the FDTC for the braid group using Dehornoy positivity} One may characterize the FDTC on the braid group $B_n$ as follows.
\begin{prop}[{\cite[8.1~Theorem]{malyutin2005twist}}]\label{prop:CharPropForFDTC}For every $n\geq2$,
there exists exactly one homogenous quasimorphism
$\fd\colon B_n \to \R$ such that
\begin{enumerate}[$($i\,$)$]
  \item $\fd\left(\Delta^2\right)=1$.
  \item\label{item:pos}$\fd(\beta)\geq 0$ for all $\beta\in B_n$ with $\beta \succ_\textrm{Deh} 1$.
\end{enumerate}
\end{prop}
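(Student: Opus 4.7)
The plan is to establish existence and uniqueness separately; the uniqueness half is the structural one and should generalize to drive the proof of Theorem~\ref{thm:main}. For \emph{existence}, I would take $\fd$ to be the classical FDTC on the braid group. It is a homogeneous quasimorphism~\cite{malyutin2005twist}, the normalization~(i) is built in, and condition~(ii) follows from a standard connection between Dehornoy-positivity and non-negativity of the FDTC (one route being via the implication Dehornoy-positive $\Rightarrow$ $\partial$-right-veering and non-negativity of the FDTC on $\partial$-right-veering elements).

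For \emph{uniqueness}, let $f\colon B_n\to\R$ be any homogeneous quasimorphism satisfying (i) and (ii), with defect $D_f$. The two features of the Dehornoy order I would exploit are: $\Delta^2$ is central in $B_n$ and Dehornoy-positive, and the powers $\{\Delta^{2m}\}_{m\in\Z}$ are cofinal in the order. These together imply that every $\beta\in B_n$ lies in a unique half-open Dehornoy interval: there is a unique $m(\beta)\in\Z$ with
\[\Delta^{2m(\beta)}\preceq_{\mathrm{Deh}}\beta\prec_{\mathrm{Deh}}\Delta^{2(m(\beta)+1)}.\]
By left-invariance of the order and centrality of $\Delta^2$, both $\Delta^{-2m(\beta)}\beta$ and $\Delta^{2(m(\beta)+1)}\beta^{-1}$ are Dehornoy-non-negative.

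Feeding these into~(ii), combining with the defect inequality $|f(gh)-f(g)-f(h)|\leq D_f$, and using $f(\Delta^{2k})=k$ together with $f(\beta^{-1})=-f(\beta)$ (both from~(i) and homogeneity), I would derive
\[m(\beta)-D_f \;\leq\; f(\beta) \;\leq\; m(\beta)+1+D_f.\]
Applying the same bound to $\beta^k$ in place of $\beta$, using $f(\beta^k)=kf(\beta)$, dividing by $k$, and letting $k\to\infty$ gives
\[f(\beta)=\lim_{k\to\infty}\frac{m(\beta^k)}{k}.\]
Since the right-hand side involves only $\beta$ and the Dehornoy order, any two $f$ satisfying (i) and (ii) must coincide.

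The main obstacle I anticipate is the cofinality claim---that every braid is eventually $\prec_{\mathrm{Deh}}$-dominated by some positive power of $\Delta^2$. This is a genuinely non-trivial algebraic feature of the Dehornoy order on $B_n$, amounting to a kind of archimedeanity with respect to the central element $\Delta^2$; I would invoke it from Dehornoy's theory rather than reprove it. Everything else in the argument is elementary manipulation with quasimorphisms, and the same template should transfer to Theorem~\ref{thm:main}, with $T$ replacing $\Delta^2$ and $\partial$-right-veering replacing Dehornoy-positivity, and the corresponding archimedean/cofinality property becoming the main hurdle there.
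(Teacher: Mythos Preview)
Your proposal is correct and follows essentially the same strategy as the paper. The paper packages the argument as a general group-theoretic result (Proposition~\ref{prop:uniqunessviapositivityongroups}) for any group with a central element $T$ and a left-invariant relation satisfying an Archimedean property, and then proves Proposition~\ref{prop:CharPropForFDTC} in one line by applying it with $G=B_n$, $T=\Delta^2$, $\preceq=\preceq_{\mathrm{Deh}}$; your floor $m(\beta)$ is exactly the paper's $\lfloor\beta\rfloor$, and your cofinality obstacle is precisely the Archimedean hypothesis in that proposition. Two minor presentational differences: for uniqueness the paper (inside Proposition~\ref{prop:uniqunessviapositivityongroups}) argues by contradiction rather than deriving your formula $f(\beta)=\lim_k m(\beta^k)/k$, and for existence the paper constructs the quasimorphism from scratch as the homogenization of the floor rather than citing the classical FDTC---which has the advantage of avoiding any circularity and of working verbatim for big mapping class groups where no prior FDTC is available.
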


Observe that~\eqref{item:pos} depends on the notion of Dehornoy positivity, also called $\sigma$-positivity; see e.g.~\cite{DDRW-OrderingBraids}. However, it turns out that one may use any number of notions of positivity 
and the statement of Proposition~\ref{prop:CharPropForFDTC} 
holds. In fact, statements analogous to Proposition~\ref{prop:CharPropForFDTC} hold for (big and small) mapping class groups (see Section~\ref{sec:MCG}), and one may phrase this rather generally for groups with a certain order or relation on them (see Proposition~\ref{prop:uniqunessviapositivityongroups} below).

\subsection*{Characterizing quasimorphisms on central extensions of the infinite cyclic group}
The following makes precise the claim from above that Proposition~\ref{prop:CharPropForFDTC} may be phrased rather generally. {We include the general statement of Proposition \ref{prop:uniqunessviapositivityongroups} as it may be of broader use. However, in our applications, we will take $G$ to be a mapping class group, $T$ to be a Dehn twist around a boundary component, and in this setting we can take $K=0$. }

\begin{prop}\label{prop:uniqunessviapositivityongroups} Let $G$ be a group, $T$ be an element in its center, and $\preceq$ a left-invariant relation on $G$ such that there exists an integer $K$ with
\begin{align}\label{(T)}
\text{for all $\beta\in G$, $1\preceq \beta$ or $\beta\preceq T^K$.}
\end{align}
There exists at most one homogeneous quasimorphism $f\colon G\to \R$ such that
\begin{enumerate}[$($i\,$)$]
\item\label{item:i} $f(T)=1$
\item\label{item:ii} $f\left(\left\{\beta\in G\mid \beta\succeq 1\right\}\right)\subseteq\R$ is bounded below.
\end{enumerate}

If, furthermore, the relation $\preceq$ is transitive and we have the following \emph{Archimedean} property: for each $\beta\in G$ there exists an integer $k_\beta$ such that $\beta\preceq T^{k_\beta}$ and $\beta\not\preceq T^l$ for $l\leq -k_\beta$, then there does exist a homogeneous quasimorphism $f\colon G\to \R$ such that~\eqref{item:i} and~\eqref{item:ii}. In this case, the defect $D_f$ of $f$ is bounded by $K+1$ and $f\left(\left\{\beta\in G\mid \beta\succeq 1\right\}\right)\subseteq[0,\infty)$.
\end{prop}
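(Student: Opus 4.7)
The plan is to base everything on a single group-theoretic quantity, the \emph{floor function} $\lfloor\beta\rfloor:=\sup\{n\in\Z:T^n\preceq\beta\}$. The crucial preliminary observation is that, because $T$ is central, applying (T) to $T^{-n}\beta$ for arbitrary $n\in\Z$ gives the dichotomy: for every $\beta\in G$ and every $n\in\Z$, either $T^n\preceq\beta$ or $\beta\preceq T^{n+K}$.

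For the uniqueness statement, suppose $f$ satisfies (i)--(ii) with defect $D$ and lower bound $-C$ on the positive cone. Using left-invariance and centrality of $T$, the two alternatives translate to $T^{-n}\beta\succeq 1$ and $T^{n+K}\beta^{-1}\succeq 1$, respectively. Applying~(ii), the quasimorphism inequality, and $f(T^k)=k$ from homogeneity and~(i), the first alternative yields $f(\beta)\geq n-D-C$ and the second yields $f(\beta)\leq n+K+D+C$. Taking $n=\lfloor\beta\rfloor$ (where the first alternative holds by definition) and $n=\lfloor\beta\rfloor+1$ (where it must fail, so the second holds) gives $|f(\beta)-\lfloor\beta\rfloor|\leq K+1+D+C$. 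Replacing $\beta$ by $\beta^l$, dividing by $l$, and using homogeneity drives the error to zero and identifies
\[ f(\beta)=\lim_{l\to\infty}\frac{\lfloor\beta^l\rfloor}{l}, \]
a formula depending only on the data $(\preceq,T)$. Hence $f$ is unique.

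For the existence part under transitivity and the Archimedean property, the Archimedean hypothesis (applied to $\beta$ and to $\beta^{-1}$) forces $\lfloor\beta\rfloor$ to be a finite integer for every $\beta$. I would first verify the superadditivity $\lfloor\alpha\beta\rfloor\geq\lfloor\alpha\rfloor+\lfloor\beta\rfloor$: starting from $T^{\lfloor\alpha\rfloor}\preceq\alpha$ and $T^{\lfloor\beta\rfloor}\preceq\beta$, left-invariance, centrality of $T$, and transitivity chain these into $T^{\lfloor\alpha\rfloor+\lfloor\beta\rfloor}\preceq\alpha\beta$. Fekete's lemma then makes $f(\beta):=\lim_l \lfloor\beta^l\rfloor/l=\sup_l \lfloor\beta^l\rfloor/l$ well-defined and finite (finiteness from the subadditivity of $\mu(\gamma):=\min\{m:\gamma\preceq T^m\}$, which gives $\lfloor\beta^l\rfloor\leq l\mu(\beta)$). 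Condition (i) then follows from $\lfloor T^l\rfloor=l$, a consequence of reflexivity together with the Archimedean hypothesis for $T^{\pm 1}$. The stronger form $f(\{\beta\succeq 1\})\subseteq[0,\infty)$ in~(ii) follows since $1\preceq\beta$ iterates by left-invariance and transitivity to $1\preceq\beta^l$ for all $l\geq 1$, so $\lfloor\beta^l\rfloor\geq 0$. Homogeneity $f(\beta^k)=kf(\beta)$ is immediate for positive $k$; for $k=-1$ it follows from the identity $\lfloor\gamma^{-1}\rfloor=-\mu(\gamma)$, obtained by reflecting definitions through left-invariance and centrality.

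The main obstacle is the sharp defect bound $D_f\leq K+1$. The dichotomy gives $\mu(\gamma)\leq\lfloor\gamma\rfloor+K+1$, and an analogous argument using $\mu$ in place of $\lfloor\cdot\rfloor$ (subadditivity of $\mu$ follows from transitivity plus left-invariance and centrality by the mirror of the superadditivity calculation) together with the identity $\mu(\alpha\beta)\geq\mu(\alpha)+\lfloor\beta\rfloor$ yields $\lfloor\alpha\beta\rfloor\leq\lfloor\alpha\rfloor+\lfloor\beta\rfloor+(K+1)$. Thus $\lfloor\cdot\rfloor$ itself is a quasimorphism of defect at most $K+1$. The naive homogenization of a quasimorphism of defect $D$ would lose a factor of two, so the delicate step is to combine the two-sided bound $\lfloor\beta\rfloor\leq f(\beta)\leq\lfloor\beta\rfloor+K+1$ with the defect estimate for $\lfloor\cdot\rfloor$ in a single computation, exploiting that \emph{both} errors are governed by the same constant $K+1$ to collapse them into one.
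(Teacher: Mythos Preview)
Your approach is essentially the paper's: define the floor $\lfloor\beta\rfloor=\max\{n:T^n\preceq\beta\}$, show it is a superadditive quasimorphism of defect $K+1$, and homogenize. The paper proves the upper defect bound $\lfloor\alpha\beta\rfloor\leq\lfloor\alpha\rfloor+\lfloor\beta\rfloor+K+1$ directly from~(T) (using $1\not\preceq\alpha T^{-\lfloor\alpha\rfloor-1}$, hence $\alpha T^{-\lfloor\alpha\rfloor-1}\preceq T^K$, then chaining with $1\preceq\alpha\beta T^{-\lfloor\alpha\beta\rfloor}$); your route via $\mu$ amounts to the same thing once one observes that superadditivity of $\lfloor\cdot\rfloor$ applied to $(\alpha\beta,\beta^{-1})$ gives $\lfloor\alpha\beta\rfloor\leq\lfloor\alpha\rfloor+\mu(\beta)$, and then $\mu(\beta)\leq\lfloor\beta\rfloor+K+1$ finishes it. (You wrote the dual identity $\mu(\alpha\beta)\geq\mu(\alpha)+\lfloor\beta\rfloor$; it is the mirror inequality $\lfloor\alpha\beta\rfloor\leq\lfloor\alpha\rfloor+\mu(\beta)$ that is needed here.) For the final step $D_f\leq K+1$, the paper simply invokes the known fact that the homogenization of a \emph{superadditive} quasimorphism has defect bounded by the original defect; you correctly flag this as the delicate point without supplying the argument.

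Two differences worth noting. Your uniqueness proof is constructive: you show any admissible $f$ must satisfy $|f(\beta)-\lfloor\beta\rfloor|\leq K+1+D+C$ (with $\lfloor\beta\rfloor$ finite because the existence of $f$ forces it), and hence $f(\beta)=\lim_l\lfloor\beta^l\rfloor/l$. The paper instead argues by contradiction, producing from $f\neq g$ an element $\beta$ with $f(\beta)<0<g(\beta)$ and deriving absurdity from~(T). Both are valid; yours has the bonus of yielding the explicit formula up front.

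There is one small gap. You write that $\lfloor T^l\rfloor=l$ ``by reflexivity'', but $\preceq$ is not assumed reflexive. The Archimedean hypothesis applied to $\beta=1$ only gives $1\preceq T^{k_1}$ for some $k_1$ (and $1\not\preceq T^l$ for $l\leq -k_1$), whence $l-k_1\leq\lfloor T^l\rfloor\leq l+k_1-1$. This still yields $f(T)=1$ after homogenization, which is exactly how the paper handles it, but the equality $\lfloor T^l\rfloor=l$ is not available.
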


 Proposition~\ref{prop:uniqunessviapositivityongroups} immediately recovers Proposition~\ref{prop:CharPropForFDTC} by noting that the Dehornoy order is a left-invariant total order. As such it in particular satisfies~\eqref{(T)}, which can be understood as a coarse version of totality.
\begin{proof}[Proof of Proposition~\ref{prop:CharPropForFDTC}]
Apply Proposition~\ref{prop:uniqunessviapositivityongroups} to $G=B_n$, $T=\Delta^2$, and $\preceq=\preceq_\textrm{Deh}$.
\end{proof}
\begin{proof}[Proof of Proposition~\ref{prop:uniqunessviapositivityongroups}]
We first prove the uniqueness statement. Let $f$ and $g$ be homogeneous quasimorphisms such that  
\[f(T)=g(T)=1\et f\left(\left\{\beta\in G\mid \beta\succeq 1\right\}\right),g\left(\left\{\beta\in G\mid \beta\succeq 1\right\}\right)\subseteq [C,\infty)\] for some $C\in\R$.
Assume towards a contradiction that $f\neq g$, i.e.~there exists an $\alpha\in G$ with $g(\alpha)-f(\alpha)>0$. Pick $m\in\N$ such that $g(\alpha^m)-f(\alpha^m)>1$ and pick $k\in\Z$ such that
\[k+g(\alpha^m)\overset{\text{\eqref{item:i}}}{=}g(T^k)+g(\alpha^m)=g(T^k\alpha^m)>0>f(T^k\alpha^m)=f(T^k)+f(\alpha^m)\overset{\text{\eqref{item:i}}}{=}k+f(\alpha^m),\]
where the second and second to last equality uses that homogeneous quasimorphisms restrict to group homomorphisms on Abelian subgroups.
We set $\beta\coloneqq T^k\alpha^m$.
Since $f(\beta)<0$, there exists an $n_0\in \N$ such that $f(\beta^{n_0n} T^{n})=n(n_0f(\beta)+1)<C$ for $n\in \N$. Hence, $1\not\preceq \beta^{n_0n} T^{n}$ and, thus, by~\eqref{(T)}, $\beta^{n_0n} T^{n}\preceq T^{K}$ for $n\in\N$.
This leads to the following contradiction. For all $n\in\N$, we have
\[C\leq g\left(T^{K-n}\beta^{-n_0n}\right)=g\left(T^{K-n}\right)+g\left(\beta^{-n_0n}\right)\leq K-n-n_0ng(\beta)\leq K-n.
\]

We now turn to the existence statement. For this we show the following claim.
\begin{claim}\label{claim}$\lfloor\beta\rfloor\coloneqq \max\{{k\in\Z}\mid T^k\preceq\beta\}$, called the \emph{floor of $(G,T,\preceq)$}, defines a function from $G$ to $\Z$ that satisfies
\[0\leq\lfloor \alpha\beta\rfloor-\lfloor \alpha\rfloor-\lfloor\beta\rfloor\leq K+1\quad\text{ for all }\alpha,\beta\in G.\]
\end{claim}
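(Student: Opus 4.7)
The plan is to verify the three pieces of the claim in turn: well-definedness of $\lfloor\,\cdot\,\rfloor$ as an integer-valued function on $G$, the lower bound $\lfloor\alpha\beta\rfloor \geq \lfloor\alpha\rfloor + \lfloor\beta\rfloor$, and the upper bound $\lfloor\alpha\beta\rfloor \leq \lfloor\alpha\rfloor + \lfloor\beta\rfloor + K + 1$. The recurring technical tool is that left-invariance combined with centrality of $T$ lets one shuffle powers of $T$ past any element, so that $T^k \preceq \gamma$ is equivalent to $1 \preceq T^{-k}\gamma$, and similarly for other rearrangements.

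For well-definedness I would show that $S_\beta \coloneqq \{k \in \Z \mid T^k \preceq \beta\}$ is non-empty and bounded above. Condition~\eqref{(T)} applied to $T^{-m}\beta$ gives the dichotomy $T^m \preceq \beta$ or $\beta \preceq T^{m+K}$; the Archimedean hypothesis on $\beta$ forbids the second alternative once $m + K \leq -k_\beta$, forcing $T^m \preceq \beta$ for all sufficiently negative $m$. Boundedness above uses transitivity: $T^k \preceq \beta \preceq T^{k_\beta}$ yields $1 \preceq T^{k_\beta - k}$, but the Archimedean condition applied to $1 \in G$ gives $1 \not\preceq T^l$ for $l \leq -k_1$, so $k_\beta - k$ must exceed $-k_1$, i.e.~$k < k_\beta + k_1$.

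The lower bound is the quick step. Setting $a \coloneqq \lfloor\alpha\rfloor$ and $b \coloneqq \lfloor\beta\rfloor$, left-invariance applied to $T^b \preceq \beta$ gives $\alpha T^b \preceq \alpha\beta$; centrality rewrites the left side as $T^b \alpha$, and left-invariance applied to $T^a \preceq \alpha$ yields $T^{a+b} \preceq T^b \alpha$. Transitivity then gives $T^{a+b} \preceq \alpha\beta$, whence $\lfloor\alpha\beta\rfloor \geq a+b$.

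The upper bound is the heart of the claim and the only place the constant $K$ enters. Assume $T^m \preceq \alpha\beta$. Left-multiplying by $\alpha^{-1}$ yields $T^m \alpha^{-1} \preceq \beta$. The defining maximality $T^{b+1} \not\preceq \beta$ combined with transitivity rules out $T^{b+1} \preceq T^m\alpha^{-1}$, so $T^{b+1} \not\preceq T^m\alpha^{-1}$; left-multiplying by $T^{-b-1}$ reformulates this as $1 \not\preceq T^{m-b-1}\alpha^{-1}$. Condition~\eqref{(T)} then forces the remaining alternative $T^{m-b-1}\alpha^{-1} \preceq T^K$; left-multiplying by $\alpha T^{-K}$ and using centrality rewrites this as $T^{m-b-K-1} \preceq \alpha$, and the maximality $a = \lfloor\alpha\rfloor$ yields $m - b - K - 1 \leq a$, i.e.~$m \leq a + b + K + 1$. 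I anticipate the only real subtlety to be bookkeeping: left-invariance permits left-multiplication only, and each rearrangement relies on centrality of $T$ to commute powers of $T$ past $\alpha^{\pm 1}$, which is routine but easy to slip on.
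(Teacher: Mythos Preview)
Your proof is correct and follows essentially the same approach as the paper. The lower bound argument is identical, and your upper bound argument is the natural mirror of the paper's: where you use maximality of $\lfloor\beta\rfloor$ to force the alternative in~\eqref{(T)} for an expression involving $\alpha^{-1}$ and then conclude via $\lfloor\alpha\rfloor$, the paper instead uses maximality of $\lfloor\alpha\rfloor$ to trigger~\eqref{(T)} for an expression involving $\alpha$ and concludes via $\lfloor\beta\rfloor$. For well-definedness the paper is slightly more direct, applying the Archimedean property to $\beta^{-1}$ (so that $\beta^{-1}\preceq T^{k_{\beta^{-1}}}$ and $\beta^{-1}\not\preceq T^l$ for $l\leq -k_{\beta^{-1}}$ become, after one use of left-invariance, exactly nonemptiness and boundedness of $S_\beta$), whereas you combine~\eqref{(T)}, transitivity, and the Archimedean property for both $\beta$ and $1$; but this is a stylistic difference only.
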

\begin{proof}[Proof of Claim~\ref{claim}]
To see that the floor is well-defined, we note that $\{{k\in\Z}\mid T^k\preceq\beta\}$ is a nonempty and bounded-above subset of $\Z$ (since, by the Archimedean property, the set contains $-k_{\beta^{-1}}$ and contains no $k\geq k_{\beta^{-1}}$) and hence it has a maximum in $\Z$.

Next we show $0\leq\lfloor \alpha\beta\rfloor-\lfloor \alpha\rfloor-\lfloor\beta\rfloor$ for all $\alpha,\beta\in G$.
Since $1\preceq\alpha T^{-\lfloor\alpha\rfloor}$ (by the definition of $\lfloor\alpha\rfloor$) and $\alpha T^{-\lfloor\alpha\rfloor}\preceq\alpha T^{-\lfloor\alpha\rfloor}\beta T^{-\lfloor\beta\rfloor}$ (by the definition of $\lfloor\beta\rfloor$ and left-invariance), transitivity yields
$1\preceq\alpha T^{-\lfloor\alpha\rfloor}\beta T^{-\lfloor\beta\rfloor}=\alpha\beta T^{-\lfloor\beta\rfloor-\lfloor\alpha\rfloor}$. Hence, we have $\lfloor\alpha\beta\rfloor\geq \lfloor\beta\rfloor+\lfloor\alpha\rfloor$.

Finally, we show $\lfloor \alpha\beta\rfloor-\lfloor \alpha\rfloor-\lfloor\beta\rfloor\leq K+1$ for all $\alpha,\beta\in G$.
Note that $1\not\preceq \alpha T^{-(\lfloor \alpha\rfloor+1)}$, hence
$\left(\alpha T^{-(\lfloor \alpha\rfloor+1)}\right)\preceq T^{K}$. Rewriting this as
$\left(T^{-(\lfloor \alpha\rfloor+1+K)}\right)\alpha\preceq 1$ (by left-invariance) and using
$1\preceq\alpha\beta T^{-\lfloor\alpha\beta\rfloor}$ we have (employing transitivity and left-invariance) that
\[\left(T^{-(\lfloor \alpha\rfloor+1+K)+\lfloor\alpha\beta\rfloor}\right)=\left(T^{-(\lfloor \alpha\rfloor+1+K)}T^{\lfloor\alpha\beta\rfloor}\right)\preceq\beta.\] Hence, $\lfloor\beta\rfloor\geq -(\lfloor \alpha\rfloor+1+K)+\lfloor\alpha\beta\rfloor$ as desired.
\end{proof}

Claim~\ref{claim} can be rephrased as the observation that the floor defines a superadditive quasimorphism of defect $K+1$.
Using the claim, we find a homogeneous quasimorphism $f\colon G\to \R$ of defect $D_f\leq K+1$ as the homogenization of the floor. For this, recall that every quasimorphism to $\R$ has a well-defined homogenization quasimorphism (by Fekete's Lemma). In general, the defect of the homogenization is at most twice the defect of the original quasimorphism; however, for superadditive quasimorphisms, the homogenization has defect bounded by the defect of the quasimorphism. In other words,
$f(\beta)\coloneqq \lim_{n\to\infty}\frac{\lfloor\beta^n\rfloor}{n}$ is a homogeneous quasimorphism with defect at most $K+1$. In fact, since $\lfloor \cdot \rfloor$ is superadditive, by Fekete's lemma not only does the limit exist, it is in fact equal to the supremum,
i.e.~$f(\beta)=\lim_{n\to\infty}\frac{\lfloor\beta^n\rfloor}{n}=\sup_{n\in\N}\left\{\frac{\lfloor\beta^n\rfloor}{n}\right\}$. 

It remains to check that $f$ satisfies~\eqref{item:i} and~\eqref{item:ii}.
To see~\eqref{item:i}, we let $k_1\in\Z$ be such that $1\preceq T^{k_1}$ and $1\not\preceq T^l$ for $l\leq -k_1$. Then $1\preceq T^{-(n-k_1)}T^{n}$ and $1\not\preceq T^{-k}T^n$, for $n-k\leq -k_1$. Hence, $n-k_1\leq\lfloor T^n\rfloor\leq n+k_1-1$ for all $n\in\N$, which implies $f(T)\coloneqq \lim_{n\in\N}\frac{\lfloor T^n\rfloor}{n}=1$.
Finally, to see~\eqref{item:ii}, we note that, $\lfloor \beta^n\rfloor\geq 0$ if $1\preceq \beta$, since, if $1\preceq \beta$, then $1\preceq \beta^n$ for $n\in \N$ (by transitivity and left-invariance). Hence, $f(\beta)\geq 0$ for all $\beta$ such that $1\preceq \beta$.
\end{proof}

In Section \ref{sec:MCG} we apply Proposition~\ref{prop:uniqunessviapositivityongroups} with $G$ a fixed mapping class group and $T$ a fixed positive Dehn twist.
All left-invariant relations we will consider on $G$ will be comparable, in the sense described next. As a consequence, the unique homogeneous quasimorphism from Proposition~\ref{prop:uniqunessviapositivityongroups} will always be the same: the DTC.

For a group $G$ and a fixed element $T$ in its center, we call two left-invariant relations $\preceq_1$ and $\preceq_2$ on $G$ \emph{comparable} if there exists a constant $d\in\N$ such that $\alpha\preceq_1\beta\Rightarrow\alpha\preceq_2 T^d\beta$ and
$\alpha\preceq_2\beta\Rightarrow\alpha\preceq_1 T^d\beta$ for all $\alpha$ and $\beta$ in $G$.
\begin{Remark}\label{rmk:setvsinvR}
A left-invariant relation $\preceq$ on $G$ is uniquely described by its set of positive elements $P\coloneqq\left\{g\in G\mid 1 \preceq g\right\}$, and, vice-versa, any subset of $G$ is the set of positive elements of a unique left-invariant relation $\preceq$, namely $\alpha\preceq\beta$ if and only if $\alpha^{-1}\beta\in P$. Properties of left-invariant relations $\preceq$ are easily restated in terms of $P$. For example, $\preceq$ is transitive if and only if $P$ is multiplicatively closed. 

Note that $\preceq_1$ and $\preceq_2$ on $G$ are comparable if and only if there exists a constant $d\in\N$ such that $T^d P_1\subseteq P_2$ and $T^d P_2\subseteq P_1$, where $P_1$ and $P_2$ denote the set of positive elements of $\preceq_1$ and $\preceq_2$, respectively.
Also note that, if $\preceq_1$ and $\preceq_2$ are comparable, then the assumption~\eqref{(T)} 
in Proposition~\ref{prop:uniqunessviapositivityongroups} holds for $\preceq=\preceq_1$ if and only if it holds (in general with different $K$) for $\preceq=\preceq_2$ and a homogeneous quasimorphism satisfies~\eqref{item:i} and~\eqref{item:ii} for $\preceq=\preceq_1$ if and only if it satisfies them for $\preceq=\preceq_2$. { Hence, any comparable $\preceq_1$ and $\preceq_2$ satisfying assumption (1) will define the same quasimorphism.}
\end{Remark}

\section[Characterization (and Definition) of the DTC]{Characterization (and Definition) of 
the Dehn twist coefficient}
\label{sec:MCG}

Let $\Sigma$ be a surface (a smooth connected oriented $2$-manifold) with non-empty boundary $\partial \Sigma$ and let $\partial$ be a compact boundary component. From this point on, we will consider two possible flavors of the mapping class group of $\Sigma$.
By $\G (\Sigma,\partial)$ we mean the set of isotopy classes of orientation preserving self-diffeomorphisms of $\Sigma$ that fix the special boundary component $\partial$ pointwise, and by $\G (\Sigma, \partial \Sigma)$ we mean the isotopy classes of orientation preserving self-diffeomorphisms of $\Sigma$ that fix all boundary components pointwise. When results apply to both flavors of the mapping class group we will write simply $\G(\Sigma)$. 
Note that in the introduction we defined $\G(\Sigma)$ to mean $\G (\Sigma,\partial)$ for simplicity of exposition. However, all statements in the introduction do in fact apply to both flavors.
 
The classical mapping class group setup corresponds to the case of the surface $\Sigma$ being of finite type, i.e.~$\Sigma$ has finitely generated fundamental group.
For example, the braid group on $n$-strands arises as $\G(\Sigma,\partial)$, where $\Sigma$ is the $n$-punctured closed disk, i.e.~the result of removing $n$ points from the interior of $\mathbb{D}$.

Denote by $T\in \G(\Sigma)$ the positive Dehn twist along a simple closed curve that is parallel to $\partial$. 
Recall that we made the standing assumption that $T$ is a non-trivial element in $\G(\Sigma)$. This excludes the cases of $\Sigma$ being a closed disk or a once punctured closed disk and the case of $\G(\Sigma)$ being $\G(S^1\times[0,1],S^1\times\{0\})$.
We define pairwise comparable left-invariant relations by describing their sets of positive elements $P\subseteq \G(\Sigma)$ as discussed in Remark~\ref{rmk:setvsinvR}.

We call a continuous map from $\gamma\colon [0,1]\to\Sigma$ an arc in $\Sigma$. 
Fix a basepoint $x_0\in\partial$ and denote by $\mathfrak{I}$ the set of homotopy classes
of 
arcs
$I$ in $\Sigma$ that start at $x_0$, end in $\partial\Sigma$, and that are not boundary parallel, where homotopies are restricted on the boundary as follows. The starting point is fixed and the endpoints are fixed whenever the endpoint lies on a boundary component that remains fixed in the definition of $\G(\Sigma)$. That is, if $\G(\Sigma)=\G(\Sigma,\partial \Sigma)$ or if $\G(\Sigma)=\G(\Sigma,\partial)$ and the endpoint is on $\partial$ the homotopies are rel both endpoints, otherwise they fix the starting point and always map the endpoint into $\partial \Sigma$. 
Given this setup, it is clear that $\G(\Sigma)$ acts from the left on $\mathfrak{I}$ by $g(I)=[\phi(\gamma)]$, where $g=[\phi]\in \G(\Sigma)$ and $I=[\gamma]$. 

Let $I\neq J\in \mathfrak{I}$. Denote by $\widetilde{\Sigma}$ the universal cover of $\Sigma
$, choose a basepoint $\widetilde{x_0}$ that is the lift of ${x_0}$, and let $\widetilde{I}$ and $\widetilde{J}$ be the homotopy classes of lifts of $I$ and $J$, respectively, with initial point $\widetilde{x_0}$. Let $\gamma_{\widetilde{I}}$ and $\gamma_{\widetilde{J}}$ denote representatives of $\widetilde{I}$ and $\widetilde{J}$ that have disjoint image away from $\widetilde{x_0}$. The latter is possible since the interior of $\widetilde{\Sigma}$, denoted by $D$, is an open disk and since the endpoints of any two representatives of $\widetilde{I}$ and $\widetilde{J}$ are different due to $I\neq J$. Hence, $D\setminus \Ima(\gamma_{\widetilde{I}})$ has two connected components, $L$ (the one to the left of $\Ima(\gamma_{\widetilde{I}})$) and $R$ (the one to the right of $\Ima(\gamma_{\widetilde{I}})$), and $\Ima((\gamma_{\widetilde{J}})\cap D$ is either a subset of $L$ or of $R$.

\begin{definition}\label{def:totheright} For $I\neq J\in \mathfrak{I}$, we say $J$ is \emph{strictly to the right} of $I$, denoted by $I<J$ if $\Ima(\gamma_{\widetilde{J}})\cap D\subset R$.

For $I$ and $J$ in $\mathfrak{I}$, we say $J$ is \emph{to the right} of $I$, denoted by $I\leq J$ if
$I=J$ or $I\neq J$ and $I<J$.

\end{definition}
\begin{lemma}\label{lem:<=istotal}
The relation $\leq$ on $I\in\mathfrak{I}$ is a total order that is left-invariant under the action of $G$ on $\mathfrak{I}$.
\end{lemma}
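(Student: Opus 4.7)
The plan is to verify each total order axiom for $\leq$ in turn, followed by left-invariance. Reflexivity is immediate from the definition. The common thread throughout is that $D$ is an open disk, so an embedded arc with endpoints on its frontier separates $D$ into exactly two components, and the orientation on $\Sigma$ pulled back to $\widetilde{\Sigma}$ canonically labels them as $L$ (left) and $R$ (right) relative to the given direction of travel along $\gamma_{\widetilde{I}}$.

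First I would establish well-definedness: the statement $I < J$ must not depend on the choice of representatives $\gamma_{\widetilde{I}}, \gamma_{\widetilde{J}}$ that are disjoint away from $\widetilde{x_0}$. Any two such representatives of $\widetilde{I}$ are isotopic in $\widetilde{\Sigma}$ relative to $\widetilde{x_0}$ (with the endpoint permitted to move along its boundary component of $\widetilde{\Sigma}$ in the case $\G = \G(\Sigma,\partial)$ and a non-$\partial$ endpoint), and since $\gamma_{\widetilde{J}}$ is disjoint from both representatives, a general position argument produces an ambient isotopy avoiding $\gamma_{\widetilde{J}}$ throughout, so it carries $R$ of one representative to $R$ of the other. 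Totality then follows because for $I \neq J$ the image $\gamma_{\widetilde{J}} \cap D$ is connected and disjoint from $\gamma_{\widetilde{I}}$, hence lies entirely in $L$ or entirely in $R$. Antisymmetry follows from totality together with the symmetric observation that $\gamma_{\widetilde{J}} \subset R_I$ forces $\gamma_{\widetilde{I}} \subset L_J$ (by the orientation convention), so $J \not< I$.

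For transitivity, given $I < J$ and $J < K$, I would simultaneously pick pairwise disjoint representatives $\gamma_I, \gamma_J, \gamma_K$ (away from $\widetilde{x_0}$); this is possible because $D$ is an open disk and the three endpoints on the frontier are distinct. The ``tripod'' $\gamma_I \cup \gamma_J$ cuts $D$ into three regions, and $I < J$ tells us $\gamma_J \subset R_I$, which identifies one of these regions as $R_J$ sitting inside $R_I$. Since $\gamma_K$ is disjoint from $\gamma_I$ and $\gamma_J$ and lies in $R_J$ by $J < K$, it lies in $R_I$, yielding $I < K$.

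Finally, for left-invariance let $g = [\phi] \in \G$ and let $\widetilde{\phi}$ be the unique lift of $\phi$ to $\widetilde{\Sigma}$ fixing $\widetilde{x_0}$, which exists because $\phi$ fixes $x_0 \in \partial$ pointwise. Then $\widetilde{\phi}$ sends the lift of $I$ starting at $\widetilde{x_0}$ to the lift of $g(I)$ starting at $\widetilde{x_0}$, and, being a lift of an orientation preserving map, it carries $R_I$ onto $R_{g(I)}$. Hence $I \leq J$ transports to $g(I) \leq g(J)$. The main obstacle will be well-definedness and the simultaneous choice of pairwise disjoint representatives in the transitivity step; both are standard facts about arcs in an open disk but require care in accounting for the homotopy convention on the endpoints.
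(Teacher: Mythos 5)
Your proof is correct and follows essentially the same approach as the paper's, checking the total-order axioms by examining how the representatives in the open disk $D$ sit relative to one another. You go further than the paper in two useful ways: you explicitly address well-definedness of the comparison (independence from the choice of disjoint representatives) and you supply an explicit argument for left-invariance via the lift of $\phi$ fixing $\widetilde{x_0}$, both of which the paper leaves implicit despite the lemma statement asserting left-invariance.
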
\begin{proof}
Left-invariance is clear from the definition and the fact that orientation preserving diffeomorphism lift to orientation preserving diffeomorphisms of the universal cover.
 
It remains to show (1) reflexivity, (2) transitivity, (3) anti-symmetry and (4) totality.
(1) Reflexivity is clear. 
(2)
For transitivity we show that if $I\leq J$ and $J\leq K$ we have $I\leq K$ for all $I, J, K\in \mathfrak{I}$. We assume $I$, $J$, and $K$ are pairwise different, since there is nothing to show otherwise. 
$I\leq J$ means (by definition) that $D\cap\Ima(\gamma_{\widetilde{J}})$ is in the right component of $D\setminus\Ima(\gamma_{\widetilde{I}})$, while $J\leq K$ means that $D\cap \Ima(\gamma_{\widetilde{K}})$ is in the {right} component of $D\setminus\Ima(\gamma_{\widetilde{J}})$. Hence, $D\cap\Ima(\gamma_{\widetilde{K}})$ is in the right component of $D\setminus\Ima(\gamma_{\widetilde{I}})$, meaning $I\leq K$, as desired.
(3) For anti-symmetry we show that if $I\leq J$ and $J\leq I$ we must have $I=J$ for $I,J\in\mathfrak{I}$. Since $I\leq J$ we have either $I=J$ or $D\cap \Ima(\gamma_{\widetilde{J}})$ is in the right component of $D\setminus\Ima(\gamma_{\widetilde{I}})$. Similarly, $J\leq I$ implies either $I=J$ or $D\cap \Ima(\gamma_{\widetilde{J}})$ is in the left component of $D\setminus\Ima(\gamma_{\widetilde{I}})$. Since $\Ima(\gamma_{\widetilde{J}})$ cannot lie both in the left and right component of $D\setminus\Ima(\gamma_{\widetilde{I}})$ we must have $I=J$.
(4) Totality is clear from the definition.
\end{proof}


We write $P_I\subseteq \G(\Sigma)$ for the set of the elements $g\in \G(\Sigma)$ such that $I\leq g(I)$ and define
\[P_\textrm{R-veering}\coloneqq \bigcap_{I\in\mathfrak{I}}P_I\]
and call $P_\textrm{R-veering}$ the set of mapping classes that are \emph{$\partial$-right-veering}. 
We further write $P_\Gamma=\bigcap_{I\in\Gamma}P_I$ for $\Gamma\subseteq \mathfrak{I}$.


\begin{prop}\label{prop:charFDTCforSurfaces}
For every non-empty subset $\Gamma$ of $\mathfrak{I}$, 
there exists a unique homogeneous quasimorphism $\fd\colon \G(\Sigma)\to \R$  with $\fd(T)=1$ such that
$\fd(P_{\Gamma})$ is bounded below. This unique quasimorphism has defect at most $1$, and,
in fact, $\fd$ is the same for all non-empty subsets $\Gamma$ of $\mathfrak{I}$ and it satisfies $\fd(P_{\Gamma})\subseteq [0,\infty)$ for all of them. 
\end{prop}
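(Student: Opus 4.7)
My strategy is to apply Proposition~\ref{prop:uniqunessviapositivityongroups} to $G = \G$, the central element $T$, and the left-invariant relation $\preceq_\Gamma$ whose set of positive elements is $P_\Gamma$ (per Remark~\ref{rmk:setvsinvR}). Left-invariance is automatic from that remark, and $T$ lies in the center of $\G$ because every element of $\G$ fixes $\partial$ pointwise and so commutes with a twist supported in a neighborhood of $\partial$. Transitivity of $\preceq_\Gamma$ amounts to $P_\Gamma = \bigcap_{I \in \Gamma} P_I$ being closed under multiplication, which reduces to each $P_I$ being closed: for $g, h \in P_I$, the $\G$-invariance of the total order $\leq$ on $\mathfrak{I}$ from Lemma~\ref{lem:<=istotal} yields $g(I) \leq g(h(I)) = gh(I)$, and combining with $I \leq g(I)$ and transitivity of $\leq$ gives $gh \in P_I$.

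I would first treat the singleton case $\Gamma = \{I\}$, because there condition~\eqref{(T)} is essentially free. Indeed, totality of $\leq$ gives that for every $\beta \in \G$ either $I \leq \beta(I)$ (so $\beta \in P_I$) or $\beta(I) \leq I$, and in the latter case applying $\beta^{-1}$ using $\G$-invariance of $\leq$ yields $I \leq \beta^{-1}(I)$, i.e.~$\beta \preceq_{\{I\}} T^0$. The Archimedean property for $\preceq_{\{I\}}$---that $\beta(I)$ sits strictly between $T^{-n}(I)$ and $T^n(I)$ for all sufficiently large $n$---is the main technical input and will follow from the explicit description of the lift of $\beta$ to the annular cover of $\Sigma$ developed in Section~\ref{sec:annular}. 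With these verified, Proposition~\ref{prop:uniqunessviapositivityongroups} yields, for each $I$, a unique homogeneous quasimorphism $\fd_I \colon \G \to \R$ with $\fd_I(T) = 1$, $\fd_I(P_I) \subseteq [0, \infty)$, and defect at most $K + 1 = 1$.

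The next step is to show these $\fd_I$ all coincide, yielding a single $\fd$ that handles every non-empty $\Gamma$. The key ingredient is a second application of the annular cover: condition~\eqref{(T)} for the smallest positive set $P_\textrm{R-veering}$ with some universal constant $K$, namely that for every $\beta \in \G$, either $\beta$ is $\partial$-right-veering or $T^K \beta^{-1}$ is. Since $P_\textrm{R-veering} \subseteq P_\Gamma$ for every non-empty $\Gamma$, this condition transfers to $\preceq_\Gamma$ with the same $K$, so the uniqueness clause of Proposition~\ref{prop:uniqunessviapositivityongroups} applies to every $\preceq_\Gamma$. Taking $\Gamma = \{I, J\}$ for any two arcs, both $\fd_I$ and $\fd_J$ satisfy $\fd(T) = 1$ and, via $P_{\{I, J\}} \subseteq P_I \cap P_J$, are non-negative on $P_{\{I, J\}}$; uniqueness then forces $\fd_I = \fd_J$, and I write $\fd$ for this common value.

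To finish, for any non-empty $\Gamma$ and any $I \in \Gamma$ the inclusion $P_\Gamma \subseteq P_I$ gives $\fd(P_\Gamma) \subseteq \fd(P_I) \subseteq [0, \infty)$, which simultaneously proves existence for $\Gamma$ and the final non-negativity assertion; uniqueness for $\preceq_\Gamma$ was already noted, and the defect bound of $1$ is inherited from the singleton case. The principal obstacles are the two annular-cover inputs---the Archimedean property on a single arc and condition~\eqref{(T)} for $P_\textrm{R-veering}$---which are precisely the technical content deferred to Section~\ref{sec:annular}.
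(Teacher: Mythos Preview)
Your argument is correct and follows the paper's strategy: treat singletons $\Gamma=\{I\}$ via Proposition~\ref{prop:uniqunessviapositivityongroups} using the inputs of Lemma~\ref{lem:multclosed}, show the resulting $\fd_I$ are independent of $I$, and then deduce the general case via $P_\Gamma\subseteq P_I$. The only cosmetic difference is the middle step: the paper uses Lemma~\ref{lem:multclosed}\eqref{item:P_I=P_J} directly to get $\fd_I(P_J)\subseteq[-1,\infty)$ and invokes uniqueness for $\preceq_{\{J\}}$, whereas you repackage that same input (combined with \eqref{item:P_Itotal}) as condition~\eqref{(T)} for $P_{\textrm{R-veering}}$ with $K=1$ and invoke uniqueness for $\preceq_{\{I,J\}}$---the underlying annular-cover content is identical.
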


{We delay the proof of Proposition \ref{prop:charFDTCforSurfaces} to the end of this section, at which point the result will follow from Proposition \ref{prop:uniqunessviapositivityongroups}, with some technical details  pushed to Section \ref{sec:annular}. }

\begin{definition}
We call $\fd\colon \G(\Sigma)\to \R$ from Proposition~\ref{prop:charFDTCforSurfaces} the \emph{Dehn twist coefficient (DTC)} of $\G(\Sigma)$. 
\end{definition}

We note that by setting  $\Gamma=\mathfrak{I}$, Proposition~\ref{prop:charFDTCforSurfaces} in particular proves Theorem~\ref{thm:main}.
\begin{proof}[Proof of Theorem~\ref{thm:main}]
By definition, $g\in \G(\Sigma)$ is $\partial$-right-veering if $g(I)\geq I$ for all $I\in \mathfrak{I}$. Hence, considering Proposition~\ref{prop:charFDTCforSurfaces} with $\Gamma=\mathfrak{I}$ completes the proof.
\end{proof}

Before we discuss the proof of Proposition~\ref{prop:charFDTCforSurfaces}, we note the following consequences that show that many properties of the FDTC generalize easily to the DTC. We call $g\in G$ \emph{strictly $\partial$-right-veering} if $g(I)>I$ for all $I\in\mathfrak{I}$.
\begin{corollary}\label{cor:fd(g)>0=>gisrightveering}
If $g\in\G(\Sigma)$ satisfies $\fd(g)>0$, then $g$ is strictly $\partial$-right-veering.
\end{corollary}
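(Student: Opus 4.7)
The plan is to prove the contrapositive: if $g$ is not strictly $\partial$-right-veering, then $\fd(g) \leq 0$. Suppose there is some $I \in \mathfrak{I}$ with $g(I) \not> I$. By totality of $\leq$ (Lemma~\ref{lem:<=istotal}), this means either $g(I) = I$ or $g(I) < I$; in either case $g(I) \leq I$.

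Next, I would exploit left-invariance of $\leq$ under the $\G$-action to pass to $g^{-1}$. Applying $g^{-1}$ to both sides of $g(I) \leq I$ gives $I \leq g^{-1}(I)$, so by definition $g^{-1} \in P_I$. Now I would apply Proposition~\ref{prop:charFDTCforSurfaces} to the singleton $\Gamma = \{I\} \subseteq \mathfrak{I}$, which yields $\fd(P_I) \subseteq [0,\infty)$. In particular, $\fd(g^{-1}) \geq 0$. Since $\fd$ is homogeneous, $\fd(g^{-1}) = -\fd(g)$, hence $\fd(g) \leq 0$, contradicting $\fd(g) > 0$.

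The only non-trivial ingredient is confirming that left-invariance of the $\G$-action on the total order $\leq$ of $\mathfrak{I}$ allows the deduction $g(I) \leq I \Rightarrow I \leq g^{-1}(I)$; this is immediate from the fact that each $h \in \G$ acts by an order-preserving bijection of $(\mathfrak{I}, \leq)$, and both assertions are recorded (at least implicitly) in Lemma~\ref{lem:<=istotal}. There is essentially no main obstacle here: the whole point of allowing arbitrary nonempty $\Gamma$ in Proposition~\ref{prop:charFDTCforSurfaces} is exactly to make this kind of pointwise argument go through, and the corollary is then a one-line consequence.
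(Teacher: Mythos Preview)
Your proof is correct and essentially identical to the paper's own argument: both take the contrapositive, pick an $I\in\mathfrak{I}$ with $g(I)\leq I$, pass to $g^{-1}$ via left-invariance of $\leq$, and then invoke Proposition~\ref{prop:charFDTCforSurfaces} with $\Gamma=\{I\}$ together with homogeneity of $\fd$. The only difference is that you spell out the use of totality and left-invariance a bit more explicitly than the paper does.
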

\begin{proof} Let $g\in\G(\Sigma)$ be not strictly $\partial$-right-veering, i.e.~there exists an $I\in\mathfrak{I}$ such that $g(I)\leq I$. Hence, $g^{-1}(I)\geq I$ and thus, by Proposition~\ref{prop:charFDTCforSurfaces} applied to $\Gamma=\{I\}$, we have $-\fd(g)=\fd(g^{-1})\geq 0$ as desired.
\end{proof}


Before we get to the next consequences, we provide an explicit formula as a limit and as a supremum for the DTC, combining elements of the proofs and definitions in this section as well as the construction in the proof of Proposition~\ref{prop:uniqunessviapositivityongroups}. The proof of Proposition \ref{prop:charFDTCforSurfaces} at the end of this section justifies that this formula is correct and well-defined, but we highlight it here for the reader's convenience.

\begin{Remark}\label{rem:DTC=lim}
Fix any $I\in \mathfrak{I}$. For any $g \in \G(\Sigma)$, declare that $1\preceq g$ if $g(I)$ is to the right of $I$. Use this definition of $\preceq$ to define a floor on $\G(\Sigma)$, namely $\lfloor g \rfloor\coloneqq \max\{{k\in\Z}\mid T^k\preceq g\}$. The Dehn twist coefficient of a fixed $f \in \G(\Sigma)$ is
\[\fd(f) = \lim_{n\to\infty}\frac{\lfloor f^n\rfloor}{n}=\sup_{n\in\N}\left\{\frac{\lfloor f^n\rfloor}{n}\right\}\coloneqq\sup_{n\in\N}\left\{\frac{k}{n}\mid k\in\Z, n\in\N\colon T^{k}\preceq f^{n}\right\}.\]
When we calculate the Dehn twist coefficients of a specific map later in this paper, we will use this approach. Namely, we will choose a convenient $I$ and use it to define a floor that we then calculate for all positive iterates of the map. 

The reader should compare this to~\cite[Theorem~7.5]{malyutin2005twist} (in the setting of braids) and to~\cite[Theorem~3]{ClayGhaswala2022cofinal} (in the setting of surfaces of finite type) which, while being different results from ours, also characterize the fractional Dehn twist coefficient in terms of the homogenization of floors.
\end{Remark}
In fact, not only is the limit of $\frac{\lfloor f^n\rfloor}{n}$ equal to the supremum, it converges quickly to $\fd(f)$. Concretely, and in analogy to what is known for the FDTC in case of surfaces of finite type (see~\cite[Theorem~4.14]{ItoKawamuro_OpenBookFoliations}), one finds the following.

\begin{corollary}\label{cor:boundsondtcintermsoffloor}
For all $f\in\G(\Sigma)$ and $n\in\N$, we have \[\frac{\lfloor f^n\rfloor}{n}\leq \fd(f)\leq\frac{\lfloor f^{n}\rfloor}{n}+\frac{1}{n}.\]
\end{corollary}
\begin{proof}
 For all $n,m\in\N$,
we have $m\lfloor f^n\rfloor\leq \lfloor f^{nm}\rfloor\leq m\lfloor f^{n}\rfloor+m-1$, where in the first inequality we applied superadditivity and in the second we applied that
$\lfloor fg\rfloor\leq \lfloor f\rfloor+\lfloor g\rfloor+1$ holds for all $f,g\in\G(\Sigma)$ (see Claim~\ref{claim} {applied with $K=0$, which will be justified in the proof of Proposition \ref{prop:charFDTCforSurfaces}}). Dividing by $nm$, we find
\begin{equation}\label{eq:fdexplicitly}
\frac{\lfloor f^n\rfloor}{n}\leq \frac{\lfloor f^{nm}\rfloor}{nm}\leq \frac{\lfloor f^{n}\rfloor}{n}+\frac{m-1}{nm}<\frac{\lfloor f^{n}\rfloor}{n}+\frac{1}{n}.
\end{equation}
The first inequality of~\eqref{eq:fdexplicitly} implies
that $\fd(f)=\sup_{m\in\N}\left\{\frac{\lfloor f^{nm}\rfloor}{nm}\right\}$ for all $n\in\N$. Hence, taking the supremum over $m$ in~\eqref{eq:fdexplicitly} yields the desired result.
\end{proof}

As an application of Remark~\ref{rem:DTC=lim}, we also note the following corollary. Recall that a useful tool for computation of the FDTC for braids is that inserting positive braid generators into a braid word can only increase the FDTC~\cite[Lemma~5.2]{malyutin2005twist}. This is an instance of the following statement: inserting $\partial$-right-veering elements into a composition of mapping classes can only increase the DTC.

\begin{corollary}\label{cor:insertrvincreasesdtc} If $r\in\G(\Sigma)$ is $\partial$-right veering, then $\fd(grh)\geq\fd(gh)$ for all $g,h\in\G(\Sigma)$.
\end{corollary}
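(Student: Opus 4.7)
The plan is to reduce the inequality $\fd(grh) \geq \fd(gh)$ to a statement about multiplying a single element by a right-veering element on the right, and then use a power-and-average argument exploiting homogeneity and the defect bound $D_\fd \leq 1$ from Proposition~\ref{prop:charFDTCforSurfaces}.

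First, I would record that the set $P_\textrm{R-veering}$ is closed under both composition and conjugation. Closure under composition is immediate from the definition. For closure under conjugation, suppose $r$ is $\partial$-right-veering and $h\in\G$. For any $I\in\mathfrak{I}$, the interval $J\coloneqq h(I)$ is again in $\mathfrak{I}$ (since $h$ fixes $x_0$ and sends $\partial\Sigma$ to itself), so $r(J)\geq J$. Applying $h^{-1}$ and invoking left-invariance of the total order $\leq$ on $\mathfrak{I}$ (Lemma~\ref{lem:<=istotal}) gives $h^{-1}rh(I) \geq I$, so $h^{-1}rh$ is $\partial$-right-veering.

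Second, I rewrite $grh = (gh)(h^{-1}rh)$. Set $\beta \coloneqq gh$ and $\tilde r\coloneqq h^{-1}rh$, which is $\partial$-right-veering by the previous paragraph. The task reduces to proving $\fd(\beta\tilde r) \geq \fd(\beta)$.

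Third, for each $n\in\N$ I telescope
\[
(\beta\tilde r)^n \;=\; \beta^n\cdot\bigl(\beta^{-(n-1)}\tilde r\beta^{n-1}\bigr)\bigl(\beta^{-(n-2)}\tilde r\beta^{n-2}\bigr)\cdots\bigl(\beta^{-1}\tilde r\beta\bigr)\tilde r\;=\;\beta^n\, S_n,
\]
where $S_n$ is a product of conjugates of the $\partial$-right-veering element $\tilde r$. By the closure properties above, $S_n$ is $\partial$-right-veering, and consequently
\[
(\beta\tilde r)^n\beta^{-n} \;=\; \beta^n S_n\beta^{-n}
\]
is $\partial$-right-veering as a conjugate of a $\partial$-right-veering element. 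Proposition~\ref{prop:charFDTCforSurfaces} therefore gives $\fd\bigl((\beta\tilde r)^n\beta^{-n}\bigr)\geq 0$.

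Finally, applying the defect bound $D_\fd\leq 1$ to the factorization $(\beta\tilde r)^n = \bigl((\beta\tilde r)^n\beta^{-n}\bigr)\cdot\beta^n$ and using homogeneity yields
\[
n\,\fd(\beta\tilde r) \;=\; \fd\bigl((\beta\tilde r)^n\bigr) \;\geq\; \fd\bigl((\beta\tilde r)^n\beta^{-n}\bigr) + \fd(\beta^n) - 1 \;\geq\; 0 + n\,\fd(\beta) - 1.
\]
Dividing by $n$ and letting $n\to\infty$ gives $\fd(\beta\tilde r)\geq \fd(\beta)$, as desired.

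The only step that requires any real thought is the third one: the telescoping identity for $(\beta\tilde r)^n$ and the verification that the resulting $S_n$ lands in the $\partial$-right-veering monoid. Everything else is a routine application of the properties of $\fd$ established in Proposition~\ref{prop:charFDTCforSurfaces}.
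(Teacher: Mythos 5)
Your proof is correct, and it takes a genuinely different route than the paper's. The paper first applies conjugation invariance to reduce to $\fd(rhg)\geq\fd(hg)$, then goes back inside the construction of $\fd$: it recalls that $\fd(hg)=\lim_n\lfloor(hg)^n\rfloor/n$, where $\lfloor\cdot\rfloor$ is the floor of the left-invariant relation coming from $P_I$, and shows $\lfloor(rhg)^n\rfloor\geq\lfloor(hg)^n\rfloor$ termwise by observing that $(rhg)^n(J)\geq(hg)^n(J)$ for all $J\in\mathfrak{I}$ (induction plus left-invariance of $\leq$). Your argument never reopens the floor-function machinery: it rewrites $grh=(gh)(h^{-1}rh)$, establishes that the $\partial$-right-veering cone is a conjugation-invariant submonoid, telescopes $(\beta\tilde r)^n$ to exhibit $(\beta\tilde r)^n\beta^{-n}$ as a product of conjugates of $\tilde r$, and then closes with homogeneity and the defect bound $D_\fd\leq 1$. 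The two proofs are secretly showing the same positivity fact --- that $(\beta\tilde r)^n\beta^{-n}$ (equivalently $(rhg)^n(hg)^{-n}$) is right-veering for all $n$ --- but they convert that fact into the desired inequality differently: the paper compares the floor sequences directly, which sidesteps any error term, whereas you use the defect bound and let $n\to\infty$ to kill the $O(1)$ error. Your version is slightly longer but arguably cleaner in that it treats $\fd$ as a black box satisfying only the stated conclusions of Proposition~\ref{prop:charFDTCforSurfaces} (homogeneity, defect $\leq 1$, nonnegativity on $P_{\textrm{R-veering}}$), and as a byproduct it records the useful fact that $P_{\textrm{R-veering}}$ is a conjugation-invariant monoid, which the paper leaves implicit.
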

\begin{proof} By conjugation invariance, $\fd(grh)=\fd(rhg)$ and $\fd(hg) = \fd(gh)$, thus we can instead show that $\fd(rhg) \geq \fd(hg)$.
Recall from Remark~\ref{rem:DTC=lim} that $\fd(hg)$ equals the supremum over the set of $ \frac{k}{n}$ for which $k\in\Z$ and $n\in\N$ are such that $(hg)^nT^{-k}(I)\geq I$ and $I$ is any element in $\mathfrak{I}$. In addition, $\fd(rhg)$ equals the same supremum, but with $rhg$ in place of $hg$. { Since $r$ is $\partial$-right-veering we have that $rhg(J)\geq hg(J)$ for all $J\in\mathfrak{I}$. Now we can inductively apply $rhg$ on the left of the inequality to conclude that $(rhg)^n(J)\geq (hg)^n(J)$. Thus, whenever   $(hg)^nT^{-k}(I)\geq I$ we also have that $(rhg)^nT^{-k}(I)\geq I$ by taking $J=T^{-k}(I)$. Hence, $\fd(rhg) \geq \fd(hg)$.}
\end{proof}


To complete the proof of Proposition~\ref{prop:charFDTCforSurfaces}, and hence that of Theorem \ref{thm:main}, we need the following lemma.
\begin{lemma}\label{lem:multclosed} For all $I, J$ in $\mathfrak{I}$, we have
\begin{enumerate}[(i)]
\item\label{item:P_Itotal} $P_I\cup P_I^{-1}=\G(\Sigma)$.
\item\label{item:P_Iclosed} $P_I$ is multiplicatively closed.
\item\label{item:arch} For every $g\in \G(\Sigma)$, there exists a $k_g\in\Z$ such that $T^{l}g\in P_I$ for $l\geq k_g$  and $T^{l}g\notin P_I$ for $l\leq k_{g}-1$. 
\item\label{item:P_I=P_J} $T(P_J)\subseteq P_I$.
\end{enumerate}
\end{lemma}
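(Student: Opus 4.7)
My plan: parts~(i) and~(ii) are formal consequences of Lemma~\ref{lem:<=istotal}, while (iii) and (iv) require topological input about the lift of $T$ to the universal cover, which I would defer to the analysis of Section~\ref{sec:annular}.

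For (i), totality gives, for each $g\in\G$, either $I\leq g(I)$ (so $g\in P_I$) or $g(I)<I$; in the latter case left-invariance applied under $g^{-1}$ yields $I<g^{-1}(I)$, so $g^{-1}\in P_I$ and $g\in P_I^{-1}$. For (ii), if $g,h\in P_I$, then left-invariance applied by $g$ gives $g(I)\leq gh(I)$, and transitivity with $I\leq g(I)$ yields $I\leq gh(I)$, so $gh\in P_I$.

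For (iii) and (iv), my plan is to work in the universal cover $\widetilde{\Sigma}$ based at $\widetilde{x_0}$ and encode each $I\in\mathfrak{I}$ by the endpoint $\mathrm{endpt}(\widetilde{I})$ of the lift of $I$ starting at $\widetilde{x_0}$. Under this encoding the total order $\leq$ on $\mathfrak{I}$ corresponds to the natural cyclic order of these endpoints. The key topological input, which I would make precise in Section~\ref{sec:annular}, is that the canonical lift $\widetilde{T}$ of $T$ fixing $\widetilde{x_0}$ acts on endpoints as the deck transformation $\tau$ corresponding to $[\partial]\in\pi_1(\Sigma,x_0)$, and that $\tau$ acts freely on the relevant endpoints, translating each by exactly one fundamental domain. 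Granting this, (iii) follows from monotonicity: $\{\tau^{l}(\mathrm{endpt}(\widetilde{g(I)}))\}_{l\in\Z}$ is a $\tau$-orbit, and whether it lies weakly to the right of $\mathrm{endpt}(\widetilde{I})$ flips exactly once as $l$ varies, so one takes $k_g$ to be the smallest such $l$.

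For (iv), the key observation is that the gap between $\mathrm{endpt}(\widetilde{I})$ and $\mathrm{endpt}(\widetilde{J})$ (both lifts start at $\widetilde{x_0}$) spans strictly less than one fundamental domain of $\tau$. Hence, given $g\in P_J$, the potential leftward displacement of $\mathrm{endpt}(\widetilde{I})$ under $\widetilde{g}$ (the lift of $g$ fixing $\widetilde{x_0}$) is bounded by one fundamental domain of $\tau$, and is corrected by a single application of $\tau$, yielding $Tg\in P_I$. The main obstacle I foresee lies in rigorously establishing the ``gap strictly less than one fundamental domain'' property together with the explicit formula $\mathrm{endpt}(\widetilde{T(K)})=\tau(\mathrm{endpt}(\widetilde{K}))$; these are the substantive topological claims about the annular cover and the lifted Dehn twist that I expect to handle in Section~\ref{sec:annular}.
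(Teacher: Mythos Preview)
Your treatment of~(i) and~(ii) matches the paper's: both follow directly from Lemma~\ref{lem:<=istotal} (totality and left-invariance plus transitivity of $\leq$). Your plan for~(iii) is also essentially the paper's plan, though the paper works in the annular cover $\Sigma^\Z$ rather than the universal cover; the annular cover makes the lifted Dehn twist explicitly computable (Figure~\ref{figure:liftofdehntwist}) and reduces~(iii) to the intersection-number Lemmas~\ref{lemma:intersectionsign}--\ref{lem:IcomparestoJ}. Your universal-cover endpoint heuristic is the same idea in different language.

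For~(iv), however, there is a genuine gap. Your key claim---that ``the gap between $\mathrm{endpt}(\widetilde{I})$ and $\mathrm{endpt}(\widetilde{J})$ spans strictly less than one fundamental domain of $\tau$''---is simply false. Take $J=T^{5}(I)$: then by your own formula $\mathrm{endpt}(\widetilde{J})=\tau^{5}(\mathrm{endpt}(\widetilde{I}))$, which is five periods away. The correct statement (Lemma~\ref{lem:IcomparestoJ}) is only that some \emph{translate} $T^{k-1}(I)$ satisfies $T^{k-1}(I)<J\leq T^{k}(I)$, with $k$ depending on $I,J$ and unbounded. The paper's argument then runs: from $J\leq T^{k}(I)$ and left-invariance, $g(J)\leq T^{k}g(I)$; chaining $T^{k-1}(I)<J\leq g(J)\leq T^{k}g(I)$ gives $T^{k-1}(I)<Tg\bigl(T^{k-1}(I)\bigr)$, so $Tg\in P_{T^{k-1}(I)}$. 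The last, crucial step---which your sketch omits---is that $P_{T^{m}(I)}=P_I$ for every $m$, because $T$ is central: $I\leq g(I)$ iff $T^{m}(I)\leq T^{m}g(I)=g\,T^{m}(I)$. Without this reduction you cannot pass from $Tg\in P_{T^{k-1}(I)}$ back to $Tg\in P_I$, and your ``one application of $\tau$ corrects the displacement'' does not hold for arbitrary $I,J$.
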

Properties~\eqref{item:P_Itotal},~\eqref{item:P_Iclosed}, and~\eqref{item:arch} will be used in the proof of Proposition~\ref{prop:charFDTCforSurfaces} to check that we can apply Proposition~\ref{prop:uniqunessviapositivityongroups}. Property~\eqref{item:P_I=P_J} will be used to see that there is no dependence on choices of arcs $I$ or sets of arcs $\Gamma$.
For the proof of Lemma~\ref{lem:multclosed}, we found that a beneficial perspective is to consider a certain annular cover that is topologically simple (it is homotopy equivalent to a circle), but in which it is easy to describe a lift of $T$ explicitly (something which we find not to be the case in the universal cover). We defer this perspective and the proof of Lemma~\ref{lem:multclosed} to Section~\ref{sec:annular}. With Lemma~\ref{lem:multclosed}, the proof of Proposition~\ref{prop:charFDTCforSurfaces} becomes an almost formal consequence of Proposition~\ref{prop:uniqunessviapositivityongroups}.

\begin{proof}[Proof of Proposition~\ref{prop:charFDTCforSurfaces}]
Fix an $I\in \mathfrak{I}$. Declaring $1\preceq g$ if $g\in P_I$, there is a unique way to extend this to a left-invariant relation $\preceq$. 
By Lemma~\ref{lem:multclosed},
$\preceq$ satisfies the assumptions of Proposition~\ref{prop:uniqunessviapositivityongroups}.
Indeed, by~\eqref{item:P_Itotal} $\preceq$ satisfies~\eqref{(T)} with $K=0$, by~\eqref{item:P_Iclosed} $\preceq$ is transitive,
and, for each $g\in \G(\Sigma)$ by~\eqref{item:arch} we have $g\preceq T^{k_{g^{-1}}}$ and   
$g\not\preceq T^{l}$ for $l\leq k_{g^{-1}}-1$.
Hence, there exists a unique homogeneous quasimorphism $\fd_I\colon \G(\Sigma)\to\R$ such that $\fd_I(T)=1$ and $\fd_I(P_I)\subseteq [0,\infty)$.

Let $J$ be any other element in $\mathfrak{I}$. There is a corresponding homogeneous quasimorphism $\fd_J$. However, since
$\fd_I(P_J)\subseteq \fd_I(T^{-1}(P_I))\subseteq [-1,\infty)$ by~\eqref{item:P_I=P_J}, by the first part of Proposition~\ref{prop:uniqunessviapositivityongroups} we have $\fd_J=\fd_I$.
We set $\fd\coloneqq \fd_I$ and note that $\fd(P_I)\subseteq [0,\infty)$ for all $I\in \mathfrak{I}$. 
In particular, we have $\fd(P_\Gamma)\subseteq [0,\infty)$ (since $P_\Gamma\subseteq P_I$ for all $I\in\Gamma$) and $\fd(T)=1$. Hence, it is the unique such homogeneous quasimorphism by the first part of Proposition~\ref{prop:uniqunessviapositivityongroups}.
\end{proof}

\section{Lift of a Dehn twist to the annular cover}\label{sec:annular}
In this section we prove Lemma~\ref{lem:multclosed}. The reader might find Lemma~\ref{lem:multclosed} rather intuitive. We find it important to give a careful proof since it is essentially the only topological input that makes the algebraic setup from Section~\ref{sec:qmviafloors} applicable to the setup from Section~\ref{sec:MCG}. The main input is an explicit description of $T$ on the annular cover, which we describe next.

Denote by $\Sigma^\Z$ the $\pi_1(\partial)$-cover of $\Sigma
$.
$\Sigma^\Z$ has one compact boundary component and we choose the basepoint $\overline{x_0}$ to be the unique lift of $x_0$ on said compact boundary component (except when $\Sigma$ is diffeomorphic to $S^1\times [0,1]$ in which case $\Sigma^\Z=\Sigma$ and $\Sigma^\Z$ inherits the choice of basepoint). Removing the boundary components that do not contain the basepoint from $\Sigma^\Z$ yields a surface diffeomorphic to $S^1\times[0,1)$.

In several of the following lemmas we will need a description of the lift $\widetilde{T}$ of a (positive) Dehn twist $T$ about the preferred boundary component to the cover $\Sigma^\Z$. We provide this description in Figure~\ref{figure:liftofdehntwist}. Choosing a representative of $T$ that is the identity outside a neighborhood of $\partial$ its lift in this cover acts by the identity, except in a small annular neighborhood $A$ of the compact lift of the preferred boundary component $\partial$ and the neighborhoods (diffeomorphic to $\mathbb{R}\times [0,1]$) of non-compact lifts of the preferred boundary component $\partial$. Restricted to $A,$ the representative of $\widetilde{T}$ is a (positive) Dehn twist. Inside the neighborhoods of non-compact lifts of $\partial$, the representative of $\widetilde{T}$ acts by a (positive)  translation, as shown in Figure~\ref{figure:liftofdehntwist}.

\begin{figure}[h]
\begin{tikzpicture}
\centering
\node[anchor=south west,inner sep=0](image)  at (0,0){\includegraphics[scale=1]{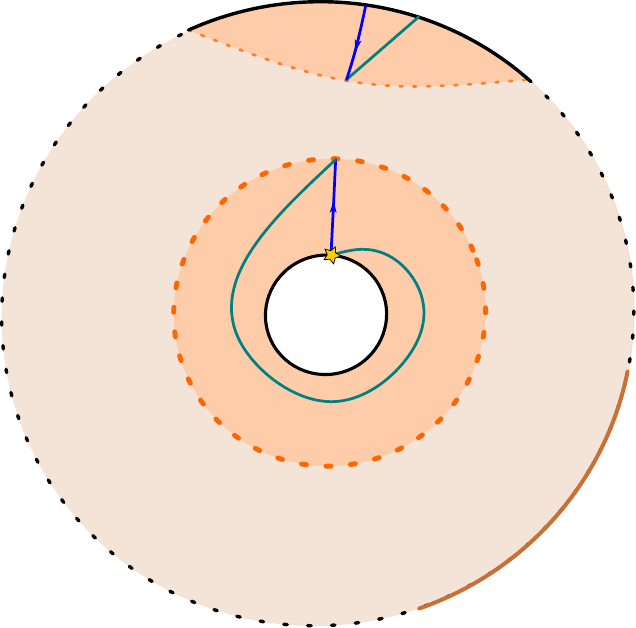}};

\begin{scope}[x={(image.south east)},y={(image.north west)}]
 \node at (0.39,0.5) {\textcolor{orange}{\Large{$A$}}};
 \node at (.43,.95){\textcolor{orange}{\Large{$L$}}};   
 \node at (.5,.45){$\widetilde{\partial_0}$} ;   
 \node at (.5, 1.03){$\widetilde{\partial_1}$};
 \node at (.95,.2){\textcolor{brown}{$\widetilde{\partial_2}$}};
 \node at (.55,.65){\textcolor{blue}{$\gamma_0$}};
 \node at (.5,.33){\textcolor{teal}{$\widetilde{T}(\gamma_0)$}};
 \node at (.53,.96) {\textcolor{blue}{$\gamma_1$}};
 
 \node at (.65,.9){\textcolor{teal}{$\widetilde{T}(\gamma_1)$}};
        
    \end{scope} 
\end{tikzpicture}
\caption{The cover $\Sigma^\mathbb{Z},$ with two non-compact lifts of boundary components drawn. The black circle $\widetilde{\partial_0}$ and black line $\widetilde{\partial_1}$ are lifts of the preferred boundary $\partial$. The brown line $\widetilde{\partial_2}$ is a lift of a different boundary component. The region $A$ is a small neighborhood of $\widetilde{\partial_0}$, and $L$ is a small neighborhood of $\widetilde{\partial_1}$. The lift $\widetilde{T}$ of $T$ in this cover acts by the identity, away from of a small annular neighborhood of the compact lift of the boundary and the $\mathbb{R}\times I$ neighborhood of non-compact lifts of the preferred boundary component.}
\label{figure:liftofdehntwist}
\end{figure}

 Let $\widetilde{I}$ and $\widetilde{J}$ denote the lifts of $I$ and $J$ to $\Sigma^\Z$ with start point $\overline{x_0}$.
(In the definition of homotopy classes of arcs on $\Sigma^\Z$, e.g.~$\widetilde{I}$ and $\widetilde{J}$, homotopies are taken to fix those endpoints that are on the preimage of those boundaries that remain fixed in the isotopies in the definition of $\G(\Sigma)$.) For representatives of $\widetilde{I}$ and $\widetilde{J}$ that intersect transversally and minimally, the signed count of intersections in the interior of $\Sigma^\mathbb{Z}$ is equal to the total number of intersections in the interior of $\Sigma^\mathbb{Z}$ up to sign, since the interior of $\Sigma^\mathbb{Z}$ is an annulus. We denote this signed count of intersections (in the interior) of 
representatives of $\widetilde{I}$ and $\widetilde{J}$ that intersect transversally and minimally by 
 $i\left(\widetilde{I},\widetilde{J}\right)\in \mathbb{Z}$.

Finally, we note that if two distinct arcs with the same starting point intersect minimally in the annular cover, their lifts to the universal cover are disjoint away from the starting point. Indeed, if they were not disjoint, one would find a bigon in the annular cover using an innermost disk argument (see e.g.~\cite[Lemma 1.8]{FarbMargalit_12_APrimerOnMCG}), in contradiction to the minimality of the intersection. Hence, being to the left or right (in the sense of Definition \ref{def:totheright}), can be determined in the annular cover by comparing the tangent vectors of the two arcs near their starting point. We now have the necessary setup to prove a series of topological lemmas, with goal of proving Lemma \ref{lem:multclosed}.

\begin{lemma}\label{lemma:intersectionsign} If $i\left(\widetilde{J},\widetilde{I}\right)<0$ then $J<I$, and if $i\left(\widetilde{J},\widetilde{I}\right)>0$ then $J>I.$
\end{lemma}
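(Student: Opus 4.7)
The plan is to reduce the claim to a local sign computation, using the two facts established immediately before the lemma: in minimal position every interior intersection in $\Sigma^{\mathbb{Z}}$ carries the same sign, so $i(\widetilde{J},\widetilde{I})$ equals that common sign times the (unsigned) intersection count; and whether $J$ lies to the right or left of $I$ is determined by comparing the initial tangent vectors $v_I,v_J\in T_{\overline{x_0}}\Sigma^\mathbb{Z}$ of minimally intersecting representatives of $\widetilde{I},\widetilde{J}$. The lemma therefore reduces to showing that the common intersection sign is $+1$ precisely when $v_J$ lies to the right of $v_I$.

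First, I would pick representatives $\gamma_I,\gamma_J$ of $\widetilde{I},\widetilde{J}$ that are transverse and in minimal position in $\Sigma^{\mathbb{Z}}$ and perturb them slightly within their homotopy classes (without creating new intersections) to arrange $v_I\neq v_J$. Without loss of generality, assume $v_J$ lies strictly to the right of $v_I$, so that $J>I$ by the tangent-vector characterization noted just above the lemma. It remains to show the common intersection sign is $+1$.

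To compute the sign, I plan to lift the picture to the universal cover $\widetilde{\Sigma}$, which is also the universal cover of $\Sigma^\mathbb{Z}$ via the $\mathbb{Z}$-quotient corresponding to $\pi_1(\Sigma^\mathbb{Z})$. Choose lifts of $\gamma_I,\gamma_J$ starting at $\widetilde{x_0}$; these are disjoint off the basepoint by the observation preceding the lemma. Intersections of $\gamma_I,\gamma_J$ in $\Sigma^{\mathbb{Z}}$ correspond bijectively to intersections of the chosen lift of $\gamma_J$ with the various deck translates of the chosen lift of $\gamma_I$ in $\widetilde{\Sigma}$. Pick any such intersection $\tilde{q}$, chosen innermost along the lift of $\gamma_J$; then the subarcs of the two lifts from their respective starting points up to $\tilde{q}$, together with a boundary segment along the lift of $\widetilde{\partial_0}$ connecting the two starting points, cobound an embedded topological disk $\Delta$ in $\widetilde{\Sigma}$. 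The hypothesis $v_J$ right of $v_I$ forces $\Delta$ to sit on the right of the lift of $\gamma_J$ near $\widetilde{x_0}$, hence on the right of this lift at $\tilde{q}$ as well; a short orientation check then confirms that at $\tilde{q}$ the pair (tangent of the lift of $\gamma_J$, tangent of the deck translate of the lift of $\gamma_I$) is a positive basis of $T_{\tilde{q}}\widetilde{\Sigma}$, yielding sign $+1$. Combined with the equal-signs principle, this gives $i(\widetilde{J},\widetilde{I})>0$. The case $v_J$ to the left of $v_I$ follows by swapping $I$ and $J$, which reverses both the order relation and the sign of $i$.

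The main obstacle will be the sign computation in the universal cover. The intersection $\tilde{q}$ may sit far from $\widetilde{x_0}$, and the local tangent vectors there have no a priori relation to $v_I,v_J$; the disk $\Delta$ is the bridge that converts the non-local sign question into a local orientation question at the basepoint. Verifying that $\Delta$ is embedded requires the innermost-crossing choice, and lining up the ``right of $v_I$'' convention at $\overline{x_0}$ with the sign convention for the signed intersection number requires careful tracking of orientations on $\Sigma$, on $\partial$, and on the tangent space at $\overline{x_0}$.
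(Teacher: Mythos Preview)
Your approach is genuinely different from the paper's, which stays in the annular cover and argues by picture: one takes $\widetilde{I}$ to be a radial arc of the annulus, observes that if all interior crossings of a minimal $\widetilde{J}$ with this radius carry a fixed sign then $\widetilde{J}$ spirals in one angular direction, and concludes that the only way to complete $\widetilde{J}$ to a minimal-position arc from $\overline{x_0}$ is for it to leave on the corresponding side of $\widetilde{I}$.  The other sign is obtained by reflecting.  No passage to $\widetilde{\Sigma}$ and no auxiliary disk is needed.

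Your universal-cover/disk argument has a real gap at the step ``the hypothesis $v_J$ right of $v_I$ forces $\Delta$ to sit on the right of the lift of $\gamma_J$ near $\widetilde{x_0}$.''  At $\widetilde{x_0}$ the two boundary pieces of $\Delta$ that meet are the initial segment of the lift of $\gamma_J$ and the boundary segment toward the translated basepoint $T^k\widetilde{x_0}$; the lift of $\gamma_I$ (whose tangent is $v_I$) is \emph{not} part of $\partial\Delta$.  Thus whether $\Delta$ lies to the right of $\gamma_J$ at $\widetilde{x_0}$ is governed by the direction of that boundary segment---equivalently by the sign of $k$---and not directly by the relation between $v_I$ and $v_J$.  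To close this gap you would have to argue: (i) since the chosen lift of $\gamma_J$ lies in the right component $R$ of $D\setminus(\text{lift of }\gamma_I)$ and deck translates of $\gamma_I$ are pairwise disjoint, each translate lies entirely in $L$ or in $R$; (ii) hence the translate through $\tilde q$ lies in $R$, so its basepoint $T^k\widetilde{x_0}$ sits on the portion of the lift of $\partial$ contained in $\overline{R}$; (iii) that portion is precisely the side of $\widetilde{x_0}$ that is ``to the right'' of $v_J$.  Step (iii) is exactly the orientation bookkeeping you flag as the main obstacle, and it is where all the content lives; as written, the proposal asserts the conclusion of this bookkeeping without doing it.  The subsequent sign check at $\tilde q$ is fine once (iii) is established.
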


\begin{proof}
Assume $i\left(\widetilde{J},\widetilde{I}\right)<0$. With the intersections of $\widetilde{I}$ and $\widetilde{J}$ as in Figure~\ref{fig:Annulusorientations},
\begin{figure}[h]
\centering
\captionsetup[subfigure]{labelformat=empty}

\begin{tikzpicture}[scale=1.0]
	\node[anchor=south west,inner sep=0] (image) at (-2.7,0) {\includegraphics[width=.4\textwidth]{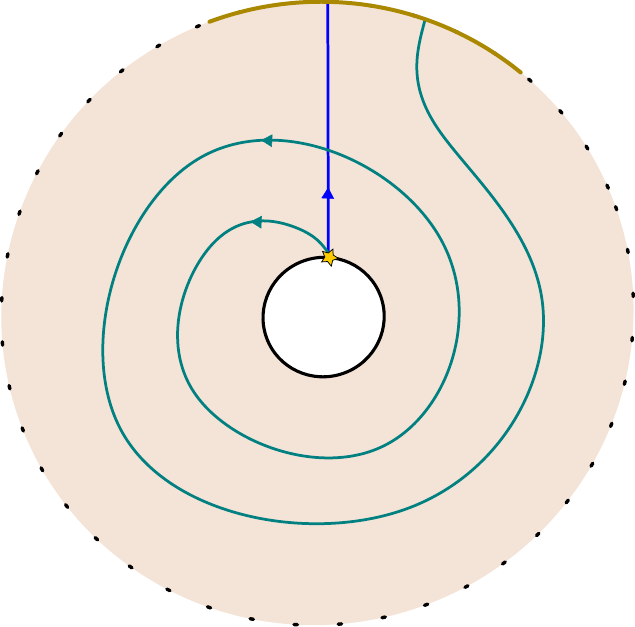} };
	\begin{scope}[x={(image.south east)},y={(image.north west)}]
	  \node at (1, .85){ $\textcolor{blue}{\widetilde{I}}$};
        \node at (0.3, .8){ $\textcolor{teal}{\widetilde{J}}$};
           
	\node[inner sep=0] (image) at (3.2,.5) {\includegraphics[width=.4\textwidth]{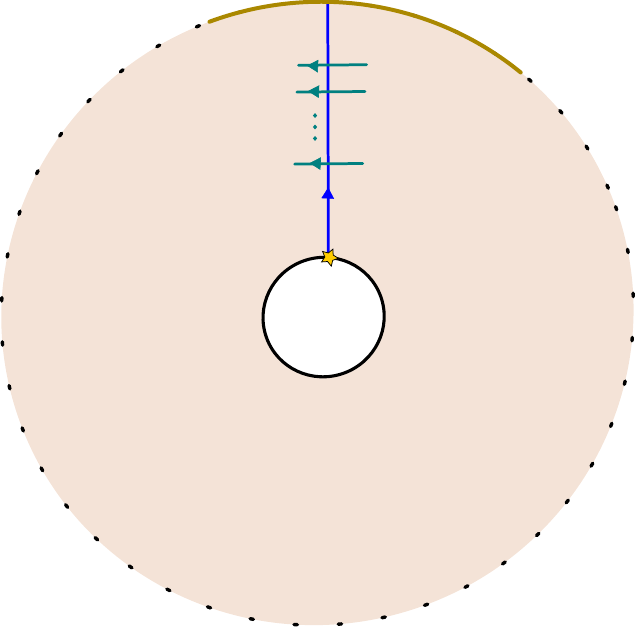} };
	  \node at (3.6, .65){ $\textcolor{blue}{\widetilde{I}}$};
	  \node at (3.9, .85){ $\textcolor{teal}{\widetilde{J}}$};
	\end{scope}
\end{tikzpicture}

\caption{On the left we give an example of $\widetilde{I}$ and $\widetilde{J}$ with $i\left(\widetilde{J},\widetilde{I}\right)=-1$ and $J<I$. On the right we illustrate the general case $i(\widetilde{J},\widetilde{I})<0$ and note that extending the teal arcs to a properly embedded arc $\widetilde{J}$, which has minimal intersection with $\widetilde{I}$, forces $J<I$.}
\label{fig:Annulusorientations}
\end{figure}
the only possibilities to complete $\widetilde{J}$ with minimal intersection with $\widetilde{I}$ forces $\widetilde{J}$ to be strictly to the left of $\widetilde{I}$. This is equivalent to $J<I$.  The case $i\left(\widetilde{J},\widetilde{I}\right)>0$ is obtained by reflecting Figure~\ref{fig:Annulusorientations}. 
\end{proof}
Next we note that $T$ is strictly $\partial$-right-veering.
\begin{lemma}\label{lem:fulltwistcomparison}For all $I,J\in\mathfrak{I}$, $J\geq I$ implies $T(J)>I$.
\end{lemma}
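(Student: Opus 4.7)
The plan is to reduce the statement $J \geq I \Rightarrow T(J) > I$ to the single fact that $T(I) > I$ for every $I \in \mathfrak{I}$, and then to establish that fact by a signed intersection computation in the annular cover using the description of $\widetilde{T}$ in Figure~\ref{figure:liftofdehntwist}. For the reduction, I would first observe that the $\G$-action on $\mathfrak{I}$ preserves the relation $<$: an orientation-preserving representative $\phi$ of $T$ lifts to an orientation-preserving homeomorphism of $\widetilde{\Sigma}$ that fixes $\widetilde{x_0}$ and carries the right component of $D \setminus \gamma_{\widetilde{I}}$ onto the right component of $D \setminus \gamma_{\widetilde{T(I)}}$, so $I < J$ if and only if $T(I) < T(J)$. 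Together with the transitivity of $<$ from Lemma~\ref{lem:<=istotal}, this handles the case $J > I$ via the chain $T(J) > T(I) > I$, while the case $J = I$ reduces immediately to $T(I) > I$.

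For the key fact $T(I) > I$, I would pick a representative $\gamma$ of $\widetilde{I}$ that exits $A$ transversally and exactly once, and compute the signed intersection $i(\widetilde{T(I)}, \widetilde{I})$, where $\widetilde{T(I)}$ is represented by $\widetilde{T}(\gamma)$. By the description in Figure~\ref{figure:liftofdehntwist}, $\widetilde{T}$ is the identity outside the annular neighborhood $A$ of $\widetilde{\partial_0}$ together with the strip neighborhoods of non-compact lifts of $\partial$, it is a positive Dehn twist on $A$, and it acts by a positive translation on each such strip. Outside $A$ (with a small push-off in the strips, where the action only translates endpoints lying on non-compact lifts of $\partial$) the curve $\widetilde{T}(\gamma)$ coincides with $\gamma$; inside $A$, $\widetilde{T}(\gamma)$ wraps once positively around $\widetilde{\partial_0}$ before rejoining $\gamma$ at the transverse exit point. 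That wrap contributes exactly one positive transverse crossing with the initial segment of $\gamma$, so $i(\widetilde{T(I)}, \widetilde{I}) = +1$, and Lemma~\ref{lemma:intersectionsign} yields $T(I) > I$.

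The main obstacle will be making the intersection count in the second paragraph fully rigorous, namely verifying that the configuration produced by applying $\widetilde{T}$ to $\gamma$ is already in minimal transverse position with $\gamma$ and that the unique crossing inside $A$ is positive. Both points should follow from a local analysis using the explicit model for $\widetilde{T}$: on $A$, the formula for a positive Dehn twist forces the lone crossing to be positive and transverse, and on each strip around a non-compact lift of $\partial$ the action is a monotone translation of the lift that does not create new interior crossings with a representative of $\widetilde{I}$ pushed slightly off the strip by a small isotopy. This is the essential topological content of the lemma; everything else is formal.
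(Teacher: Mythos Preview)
Your proof is correct and rests on the same key fact as the paper's---that $T$ moves every element of $\mathfrak{I}$ strictly to the right---but your reduction is more circuitous than necessary. The paper applies that fact to $J$ (not to $I$), obtaining $T(J)>J$, and then invokes transitivity with the hypothesis $J\geq I$ to conclude $T(J)>J\geq I$. This covers both cases $J>I$ and $J=I$ at once and never needs the $\G$-invariance of $<$ that you invoke; once you know $T(K)>K$ for all $K$, applying it at $K=J$ is strictly simpler than applying it at $K=I$ and then chaining through $T(J)>T(I)$.

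For the key fact itself, the paper just says it is ``readily seen'' from the explicit description of $\widetilde{T}$ applied to an embedded representative of $\widetilde{J}$: near $\overline{x_0}$ the twisted arc departs to the right of the original, and by the remark preceding Lemma~\ref{lemma:intersectionsign} this determines the order relation once the two are in minimal position. Your route via the signed intersection number and Lemma~\ref{lemma:intersectionsign} is a valid alternative and arguably more systematic, though heavier. Your concern about minimal position is not really the crux: the signed interior intersection count in the annulus is homotopy invariant, so it suffices that your perturbed configuration gives a positive count; whether it equals exactly $+1$ (or possibly $+2$ when $\widetilde{I}$ ends on a non-compact lift of $\partial$) is immaterial for invoking Lemma~\ref{lemma:intersectionsign}.
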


\begin{proof}
It is not hard to check $T(J)>J$: it suffices to see that $\widetilde{T}(\widetilde{J})$ is strictly to the right of $\widetilde{J}$, which is readily seen from our explicit description of $\widetilde{T}(\widetilde{J})$ applied to an embedded arc representing $\widetilde{J}$. 
By transitivity we have that $T(J)>J\geq I$.
\end{proof}
\begin{lemma}\label{lem:manytwistincreasei} For all $I$ and $J$ there exists an $n$ such that $i\left(\widetilde{T^n(J)},\widetilde{I}\right)>0$ and $i\left(\widetilde{T^{-n}(J)},\widetilde{I}\right)<0$.
\end{lemma}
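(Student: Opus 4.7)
The plan is to use the explicit description of $\widetilde{T}$ on the annular cover $\Sigma^\Z$ from Figure~\ref{figure:liftofdehntwist} to show that $i(\widetilde{T}^n(\widetilde{J}),\widetilde{I})$ grows linearly in $n$ with a sign matching that of $n$. The lemma then follows by picking any sufficiently large $n$. The geometric intuition is that applying $\widetilde{T}^n$ makes any arc that crosses the annular neighborhood $A$ of $\widetilde{\partial_0}$ wind $n$ extra times around $\widetilde{\partial_0}$, and each such winding forces a new transverse intersection with $\widetilde{I}$ of consistent sign.

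To make this precise, I would first take transverse representatives $\gamma_I$ of $\widetilde{I}$ and $\gamma_J$ of $\widetilde{J}$ that intersect minimally, and homotope them so that both enter $A$ at $\overline{x_0}$ and cross a fixed core circle $\alpha$ of $A$ parallel to $\widetilde{\partial_0}$ in exactly one point, with matching outward local orientation. By the description of $\widetilde{T}$ above Figure~\ref{figure:liftofdehntwist}, $\widetilde{T}^n$ is the identity on $\Sigma^\Z\setminus(A\cup\bigcup_i L_i)$, it performs $n$ positive Dehn twists along $\alpha$ inside $A$, and it is a translation by $n$ inside each neighborhood $L_i$ of a non-compact lift of $\partial$. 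Consequently inside $A$ the arc $\widetilde{T}^n(\gamma_J)$ spirals $n$ extra times around $\widetilde{\partial_0}$ before exiting through the inner boundary of $A$, and straightening to minimal intersection with $\gamma_I$ contributes exactly $n$ new crossings with $\gamma_I$ inside $A$, all of the same sign (determined by the sign of $n$). Outside $A\cup\bigcup_i L_i$ the intersection count is unchanged. Summing these contributions with those from each $L_i$ gives $i(\widetilde{T}^n(\widetilde{J}),\widetilde{I})=\pm n + O(1)$ as $|n|\to\infty$, so both inequalities hold simultaneously for any sufficiently large $n$.

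The main obstacle is controlling the $L_i$-contributions: the translation by $n$ moves arc segments and---when an arc endpoint lies on the associated non-compact lift of $\partial$---the endpoint itself, and thus could a priori alter the signed intersection count inside $L_i$ by an amount growing in $n$. One must verify that any such growth carries the same sign as the contribution from $A$, so that no cancellation with the $n$-term from $A$ is possible. A convenient reduction is to homotope $\gamma_I$ and $\gamma_J$ so that they enter each $L_i$ only in a short segment near the corresponding endpoint (and not at all if neither arc ends on that lift of $\partial$), localizing the $L_i$-analysis to those short segments and making the sign-consistency under translation manifest.
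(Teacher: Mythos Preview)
Your approach is essentially the paper's: work in the annular cover, split the signed intersection count into the contribution from the annular neighborhood $A$ of the compact lift (which grows linearly in $n$) and a bounded remainder coming from the strip neighborhoods of the non-compact lifts of $\partial$. Two small points where the paper is cleaner:

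\begin{itemize}
\item Rather than treating all strips $L_i$ and then worrying about possible $n$-growth there, the paper simply chooses the representative of $\widetilde{I}$ to be a \emph{radius} of the annulus. Then $\widetilde{I}$ meets at most one strip, namely the neighborhood $L_I$ of the lift on which $\widetilde{I}$ terminates, so outside $A\cup L_I$ the intersections of $\widetilde{I}$ with $\widetilde{J}$ and with $\widetilde{T}^n(\widetilde{J})$ literally coincide, and inside $L_I$ the change is at most $1$. In particular your hedged alternative ``any such growth carries the same sign'' is unnecessary: once you make this normalization (which is exactly your proposed reduction, applied to $\gamma_I$), the strip contribution is genuinely $O(1)$, in fact $\leq 1$.
\item For the inequality $i(T^{-n}(J),I)<0$ the paper does not rerun the argument for negative $n$. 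It applies the first statement with the roles of $I$ and $J$ swapped to get $i(T^n(I),J)>0$, and then uses invariance of the signed count under applying $T^n$ to both arcs together with antisymmetry:
\[
i\bigl(T^{-n}(J),I\bigr)=i\bigl(J,T^{n}(I)\bigr)=-\,i\bigl(T^{n}(I),J\bigr)<0.
\]
\end{itemize}
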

\begin{proof}
For the sake of visualization, note that a representative of $\widetilde{I}$ can be taken to be a radius of the interior of $\Sigma^{\mathbb{Z}}$. Suppose that $\widetilde{I}$ ends on a lift of some boundary component, say $\widetilde{\partial_I}$, and let $L_I$ be a neighborhood of this lift. We consider the intersection number between $\widetilde{I}$ and $\widetilde{T^n (J)}=\widetilde{T}^n(\widetilde{J})$ in three regions of $\Sigma^{\mathbb{Z}}$, namely $A,$ $L_I$ and the complementary region. Comparing the intersections between (representatives of) $\widetilde{I}$ and $\widetilde{J}$ and the ones between $\widetilde{I}$ and $\widetilde{T}^n(\widetilde{J})$, we see that they are the same outside $L_I\cup A$, the number changes by at most $1$ in $L_I$, and in $A$ the algebraic intersection number can be made arbitrarily large by increasing $n$.

For the second statement, observe that we just showed there exists an $n$ such that  $i\left(\widetilde{T^n(I)},\widetilde{J}\right)>0$ by exchanging the roles of $I$ and $J$. Hence, \[i\left(\widetilde{T^{-n}(J)},\widetilde{I}\right)=
i\left(\widetilde{T}^{-n}(\widetilde{J}),\widetilde{I}\right)=
i\left(\widetilde{J},\widetilde{T}^n(\widetilde{I})\right)=i\left(\widetilde{J},\widetilde{T^n(I)}\right)=-i\left(\widetilde{T^{n}(I)},\widetilde{J}\right)<0.\qedhere\]
\end{proof}
\begin{lemma}\label{lem:IcomparestoJ} For all $I, J$ in $\mathfrak{I}$, there exists a $k\in\Z$ such that
$T^{l}(J)<I$ for $l<k$ and $I\leq T^l(J)$ for $l\geq k$. If $I=J$, then $k=0$.
\end{lemma}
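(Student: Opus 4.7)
The plan is to consider the set $S\coloneqq\{l\in\Z \mid I\leq T^l(J)\}$ and show that it is of the form $\{l\in\Z\mid l\geq k\}$ for some $k\in\Z$; that $k$ will then be the desired integer.

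First I would verify that $S$ is upward-closed. If $l\in S$, then $T^l(J)\geq I$, so by Lemma~\ref{lem:fulltwistcomparison} (applied to $T^l(J)$ and $I$), we have $T^{l+1}(J)=T(T^l(J))>I$, so $l+1\in S$. Iterating gives $l'\in S$ for every $l'\geq l$. Next I would show $S$ is nonempty and bounded below by applying Lemma~\ref{lem:manytwistincreasei}: for some $n\in\N$ we have $i(\widetilde{T^n(J)},\widetilde{I})>0$ and $i(\widetilde{T^{-n}(J)},\widetilde{I})<0$, so Lemma~\ref{lemma:intersectionsign} gives $T^n(J)>I$ (hence $n\in S$) and $T^{-n}(J)<I$ (hence $-n\notin S$ by anti-symmetry of the order from Lemma~\ref{lem:<=istotal}). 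So $k\coloneqq\min S$ exists in $\Z$.

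By construction, $I\leq T^l(J)$ for all $l\geq k$. For $l<k$, we have $l\notin S$, so $I\not\leq T^l(J)$. By totality (Lemma~\ref{lem:<=istotal}) we then have $T^l(J)\leq I$, and the case $T^l(J)=I$ is excluded since it would give $l\in S$. Thus $T^l(J)<I$, as desired.

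Finally, for the case $I=J$, I would directly verify $k=0$. Clearly $0\in S$ since $I\leq I$. For $l\geq 1$, Lemma~\ref{lem:fulltwistcomparison} combined with left-invariance and induction gives $T^l(I)>I$. For $l=-1$: if we had $I\leq T^{-1}(I)$, then left-invariance (acting by $T$) would give $T(I)\leq I$, contradicting $T(I)>I$ from Lemma~\ref{lem:fulltwistcomparison}. Hence $T^{-1}(I)<I$, and by the same left-invariance and transitivity argument, $T^l(I)<I$ for all $l\leq -1$. Therefore $S=\{l\in\Z\mid l\geq 0\}$ and $k=0$.

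The only nontrivial step is checking nonemptyness and boundedness-below of $S$, but this is precisely what Lemma~\ref{lem:manytwistincreasei} was set up to provide; the rest is an essentially formal manipulation using the previously established lemmas (totality, anti-symmetry, left-invariance, and strict right-veering of $T$). So I don't anticipate a real obstacle; this lemma serves mainly as the bookkeeping step that packages Lemmas~\ref{lemma:intersectionsign}, \ref{lem:fulltwistcomparison}, and~\ref{lem:manytwistincreasei} into the Archimedean-type statement needed for Lemma~\ref{lem:multclosed}\eqref{item:arch}.
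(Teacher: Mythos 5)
Your proof is correct and follows essentially the same strategy as the paper's: define $S=\{l\in\Z\mid I\leq T^l(J)\}$, use Lemma~\ref{lem:fulltwistcomparison} to show $S$ is upward-closed, use Lemmas~\ref{lem:manytwistincreasei} and~\ref{lemma:intersectionsign} to show $S$ is nonempty and bounded below, and take $k=\min S$. You spell out a couple of steps the paper leaves implicit (deducing $T^l(J)<I$ for $l<k$ via totality and anti-symmetry, and the induction showing $T^{-n}(I)<I$ for $n\geq 1$ in the $I=J$ case), but the argument is the same.
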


\begin{proof}
Let $S\coloneqq \left\{l\in\Z\mid I\leq T^l(J)\right\}$. By Lemma~\ref{lem:fulltwistcomparison}, if $l\in S$ then all $l'>l$ are in $S$. $S$ is non-empty since there is an $n$ such that
$i\left(\widetilde{T^n(J)},\widetilde{I}\right)>0$
by Lemma~\ref{lem:manytwistincreasei}; hence, $T^n(J)>I$ by Lemma~\ref{lemma:intersectionsign}. Finally, $S$ is bounded below since there is an $n$   
such that
$i\left(\widetilde{T^{-n}(J)},\widetilde{I}\right)<0$
by Lemma~\ref{lem:manytwistincreasei}; hence, $T^{-n}(J)<I$ by Lemma~\ref{lemma:intersectionsign}, thus $-n\notin S$. Moreover, if $N\leq -n$ then $N\notin S$ since $T^N(J)=T^{N+n}T^{-n}(J)\leq T^{-n}(J)<I$. 
Therefore, we can take $k\coloneqq \min S$.

In case $I=J$, clearly we have $0\in S$, but $T^{-n}(I)<I$ (hence $-n\notin S$) for all $n>0$.
\end{proof}

\begin{proof}[Proof of Lemma~\ref{lem:multclosed}]
\eqref{item:P_Itotal} follows since $g(I)\leq I$ or $I\leq g(I)$ for each element $g\in \G(\Sigma)$ ($\leq$ is a total order by Lemma~\ref{lem:<=istotal}).

\eqref{item:P_Iclosed} follows from $\leq$ being left-invariant and transitive. Indeed, if $g,h\in P_I$, which is equivalent to $I\leq g(I)$ and $I\leq h(I)$) but also equivalent to $I\leq g(I)$ and $g(I)\leq gh(I)$, then $I\leq gh(I)$, which is equivalent to $gh\in P_I$.

\eqref{item:arch} We set $J\coloneqq g(I)$ and let $k_g\coloneqq k$, where $k$ is the integer guaranteed to exist by Lemma~\ref{lem:IcomparestoJ}. Hence, $T^{l} (J)\geq I$ (meaning $T^lg\in P_I$) for $l\geq k_g$, while 
$T^{l} (J)<I$ (meaning $T^{l}g\notin P_I$) for $l\leq k_g-1$.

\eqref{item:P_I=P_J}
Pick $g\in P_J$, meaning $J\leq g(J)$. Let $k$ be the integer such that $T^{k-1}(I)<J\leq T^k(I)$, which exists by Lemma~\ref{lem:IcomparestoJ}.
Hence, $T^{k-1}(I)<J$ and $g(J)\leq T^kg(I)$, which yields (by $J\leq g(J)$) that
$T^{k-1}(I)<TgT^{k-1}(I)$. The latter in particular means $T(g)\in P_{T^{k-1}(I)}$. Noting that $P_{T^lI}=P_I$ for all $l\in \Z$ since $I\leq g (I)$ if and only if $T^l(I)\leq T^lg(I)=gT^l(I)$, we have shown that $Tg\in P_{I}$.
\end{proof}

\section{Surfaces of infinite type for which all reals are realized}\label{sec:irrationality}
In this section, we provide surfaces for which the DTC is surjective. In fact, we provide two constructions of such surfaces. Two base examples, one for each construction, to keep in mind are
\[\mathbb{D}\setminus C\quad\text{and}\quad \R^2\setminus \left(\mathbb{D}^\circ\cup \{(0,n)\mid 2\leq n\in \Z\}\right),\] where $\mathbb{D}$ and $\mathbb{D}^\circ$ denote the closed and open unit disks in $\R^2$, respectively, and $C$ is a Cantor set in $\mathbb{D}^\circ\subset \mathbb{D}$. In terms of the classification of surfaces, $\mathbb{D}\setminus C$ is
the genus zero surface with boundary a circle and space of ends homeomorphic to a Cantor set, and
$\R^2\setminus \left(\mathbb{D}^\circ\cup \{(0,n)\mid 2\leq n\in \Z\}\right)$ is
the genus zero surface with boundary a circle and space of ends homeomorphic to $\{1/n\mid n\in \N\}\cup \{0\}\subseteq \R$.

\begin{restatable}{thm}{allofR}
\label{thm:allofRprecise}
Let $\Sigma$ be a surface with a fixed compact boundary component $\partial$. 
\begin{enumerate}[(a)]
\item \label{it:b}
We fix a 
surface $S$ that is not the two sphere.
If $\Sigma$ is obtained from $\R^2\setminus \mathbb{D}^\circ$ by connect summing on a copy of $S$ at each integer lattice point in the interior of $\R^2\setminus \mathbb{D}^\circ$, and $\partial$ is the boundary component stemming from $\R^2\setminus \mathbb{D}^\circ$, then $\fd\colon\G(\Sigma,\partial)\to\R$ is surjective.

\item\label{it:a}
If $\Sigma$ is obtained from taking the connect sum of $\mathbb{D}\setminus C$---where $C$ denotes a Cantor set in the interior of $\mathbb{D}$---and any other 
surface, and $\partial$ is the boundary component stemming from $\mathbb{D}\setminus C$, then $\fd\colon\G(\Sigma)\to\R$ is surjective.
\end{enumerate}
\end{restatable}
In particular, this establishes Theorem~\ref{thm:allofR}.
\begin{proof}[Proof of Theorem~\ref{thm:allofR}] Clearly, Theorem~\ref{thm:allofR} follows from Theorem~\ref{thm:allofRprecise}.
\end{proof}

\begin{Remark}  We remark that all $\Sigma$ from family~\eqref{it:b} are, after capping off all boundary components with discs, \emph{self-similar}; see \cite{vlamis2023homeomorphism}. Further, while not all
self-similar surfaces arise, all \emph{perfectly self-similar surfaces} arise by capping off all boundary components of a $\Sigma$ from family~\eqref{it:b} with discs; see~\cite[Proof of Theorem 9.2]{vlamis2023homeomorphism}.

\end{Remark}


Towards establishing Theorem~\ref{thm:allofRprecise}, we start with the construction that, as far as we are aware, produces a class of elements of big mapping class groups that were not studied previously; see Section~\ref{ssec:plane-discs}. In Section~\ref{ssec:disc-cantorset}, we use a construction of mapping classes that we believe to be well-known in the big mapping class group community. Finally, in Section~\ref{subsec:thmallofR}, we use the two base examples from Section~\ref{ssec:plane-discs} and Section~\ref{ssec:disc-cantorset} to establish Theorem~\ref{thm:allofRprecise}. 

\subsection{The wagon wheel map} \label{ssec:plane-discs}
To simplify notation in this section, we identify $\mathbb{R}^2$ with $\mathbb{C}$. For $v\in\mathbb{C},$ we write
\[B_v(\epsilon)\coloneqq \left\{w\in \mathbb{C} \mid |v-w|\leq\epsilon\right\}.\]

\begin{prop}\label{prop:allofRPlane-discs}
Let $\Sigma_P\coloneqq \C\setminus \bigcup_{\{v\in\C\mid \Re(v),\, \emph{\text{Im}}(v)\in\Z\}} B_v^\circ(\frac{1}{3})$ and let $\partial$ be one of the boundary components of $\Sigma_P$. Then the $\DTC$ $ \fd\colon \G(\Sigma_P,\partial) \to \R$ is surjective. 
\end{prop}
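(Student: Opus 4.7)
The plan is to prove surjectivity by constructing, for every $\lambda \in \R$, an explicit mapping class $\phi_\lambda \in \G(\Sigma_P, \partial)$ with $\fd(\phi_\lambda) = \lambda$; these will be the \emph{wagon wheel maps of rotation $\lambda$}. The heuristic picture is that of $\Sigma_P$ as a wagon wheel: central hub $\partial$ (the boundary at the origin) and an outer rim made of infinitely many boundary components (one per non-origin lattice point). I want $\phi_\lambda$ to implement rotation by angle $2\pi\lambda$ of the outer part of the surface while fixing the hub $\partial$ pointwise.

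First I will construct $\phi_\lambda$. Since no literal rotation by irrational $\lambda$ preserves the lattice $\Z^2$, I will implement the rotation as a discrete shift. I would choose a bijection $v\colon \Z \to \Z^2 \setminus \{0\}$ enumerating the non-origin lattice points as $\{v_n\}_{n \in \Z}$ so that the angular displacement from $v_n$ to $v_{n+1}$, measured in the annular cover $\Sigma_P^\Z$ of Section~\ref{sec:annular}, equals $2\pi\lambda + o(1)$ as $|n| \to \infty$. Such an enumeration should exist because $\Z^2 \setminus \{0\}$ is angularly dense at large radius and $\{n\lambda \bmod 1\}$ is equidistributed for irrational $\lambda$. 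I would then select pairwise disjoint embedded arcs $\gamma_n \subset \Sigma_P$ joining $\partial B_{v_n}(\tfrac13)$ to $\partial B_{v_{n+1}}(\tfrac13)$ with prescribed angular advance $2\pi\lambda + o(1)$ in $\Sigma_P^\Z$, and define $\phi_\lambda$ to shift $B_{v_n}(\tfrac13)$ along $\gamma_n$ onto $B_{v_{n+1}}(\tfrac13)$, extended to be the identity outside a small neighborhood of $\bigcup_n \gamma_n$ (in particular on a collar of $\partial$).

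Second I will verify $\fd(\phi_\lambda) = \lambda$. By Proposition~\ref{prop:charFDTCforSurfaces} together with the floor formulation in the proof of Proposition~\ref{prop:uniqunessviapositivityongroups}, it suffices to show $\lim_{n \to \infty} \lfloor \phi_\lambda^n \rfloor / n = \lambda$ for some choice of $I \in \mathfrak{I}$. Taking $I$ to end on $\partial B_{v_0}(\tfrac13)$, the iterate $\phi_\lambda^n(I)$ ends on $\partial B_{v_n}(\tfrac13)$, and lifting to $\Sigma_P^\Z$ the endpoint of $\widetilde{\phi_\lambda^n(I)}$ sits at a lift displaced from $\widetilde{I}$ by cumulative angular advance $\sum_{k=0}^{n-1} (2\pi\lambda + o(1)) = 2\pi\lambda n + o(n)$. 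Under the explicit description of $\widetilde{T}$ from Section~\ref{sec:annular}, this horizontal displacement equals $\lfloor \phi_\lambda^n \rfloor$ up to bounded error, so dividing by $n$ and passing to the limit yields $\fd(\phi_\lambda) = \lambda$.

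The main obstacle is the construction in Step 1: the enumeration $\{v_n\}$ and the family of arcs $\{\gamma_n\}$ must be chosen so that the $\gamma_n$ are pairwise disjoint and avoid all lattice disks other than their endpoints, and so that the asymptotic angular rate is exactly $\lambda$ rather than merely a rational approximation. Controlling the accumulation of angular errors (which would otherwise shift the limit) is delicate and will likely require bookkeeping in the spirit of Denjoy's irrational-rotation construction on $S^1$; this is presumably why the authors describe the wagon wheel map as a new construction of independent interest.
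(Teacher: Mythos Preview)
Your strategy matches the paper's---build a ``wagon wheel'' map that rotates the outer holes by $\lambda$ and verify its DTC---but you attack the construction in a way that creates exactly the obstacle you flag, and you miss the simplification that removes it.

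You work on the original $\Sigma_P$ and therefore need a bijection $\Z\to\Z^2\setminus\{0\}$ together with pairwise disjoint arcs $\gamma_n$ of prescribed asymptotic angular advance; you correctly identify this as the hard step and invoke Denjoy. The paper sidesteps the issue completely by passing to a \emph{diffeomorphic model}: the holes are placed along a spiral at $v_k=(2.5+|k|)e^{2\pi i k\lambda}$ for $k\in\Z$, with a central hole $B_0(1)$ whose boundary is $\partial$. In this model the wagon wheel map is, on the collar annulus $A_1$ of $\partial$, the spiral interpolating from the identity to rotation by $2\pi\lambda$; and outside $A_1$, the literal rotation $R_\lambda(z)=e^{2\pi i\lambda}z$ followed by a purely radial push $P$ carrying $R_\lambda(D_k)$ onto $D_{k+1}$. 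Since $R_\lambda(v_k)$ and $v_{k+1}$ lie on the same ray at radii differing by $\pm 1$, the supports of the pushes are automatically disjoint. There is no enumeration problem, no disjointness issue, and no Denjoy-type error accumulation to control; Denjoy only enters the other construction (Proposition~\ref{prop:allofRCTree}, the Cantor-set case).

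The DTC computation is correspondingly sharper than your asymptotic one. For the arc $\gamma$ that runs from $1\in\partial$ radially out, once around $\partial D_0$, and back, one writes down an explicit representative of $\Phi^n([\gamma])$ and checks directly in the universal cover that $T^{-\lceil n\lambda\rceil}\Phi^n(I)<I<T^{-\lfloor n\lambda\rfloor}\Phi^n(I)$ for every $n$---exact inequalities, not $o(n)$ estimates---which squeezes $\fd(\Phi)=\lambda$. Rational $\lambda$ is handled separately via Proposition~\ref{prop:allofQ}, so only irrational $\lambda$ needs the wagon wheel.
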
 

For the proof we explicitly construct, for each $\lambda\in\R\setminus\Q$, a self-diffeomorphism $\phi$ of $\Sigma_P$ (and the corresponding mapping class $\Phi$) with $\fd(\Phi)=\lambda$, which we call \emph{wagon wheel map of rotation $\lambda$}.
\begin{proof}[Proof of Proposition~\ref{prop:allofRPlane-discs}]

We will show in Section \ref{section:rationals} that $\Q\subseteq \fd(\G(\Sigma_P,\partial))$. Hence, we fix $\lambda\in \R\setminus \Q$ and show that there exists a $g\in \G(\Sigma_P,\partial)$ with $\fd(g)=\lambda$.

For this we consider the following diffeomorphic copy of $\Sigma_P$, which we denote by $\Sigma$. Set $v_k\coloneqq (2.5+|k|)e^{2\pi i k\lambda}$ for $k\in\Z$ and choose $\epsilon_k>0$ sufficiently small; concretely, $\epsilon_k<\lambda-\lfloor\lambda\rfloor, \lceil\lambda\rceil-\lambda$ for all $k\in\Z$ such that $2\epsilon_k + 2\epsilon_{-k}<|v_k-v_{-k}|$ for $k\in\Z\setminus\{0\}$ will suffice as we will see below. We let $D_k$ denote $B_{v_k}(\epsilon_{k})$ and set
\[\Sigma\coloneqq \C\setminus \left(B_{0}^\circ(1)\cup\bigcup_{k\in\Z} D^\circ_k\right)\]
and $\partial\coloneqq \partial B_0(1)$; see Figure~\ref{fig:SigmaP} for an illustration of $\Sigma$.
\begin{figure}[h]
\centering
\includegraphics[width=.8\textwidth]{
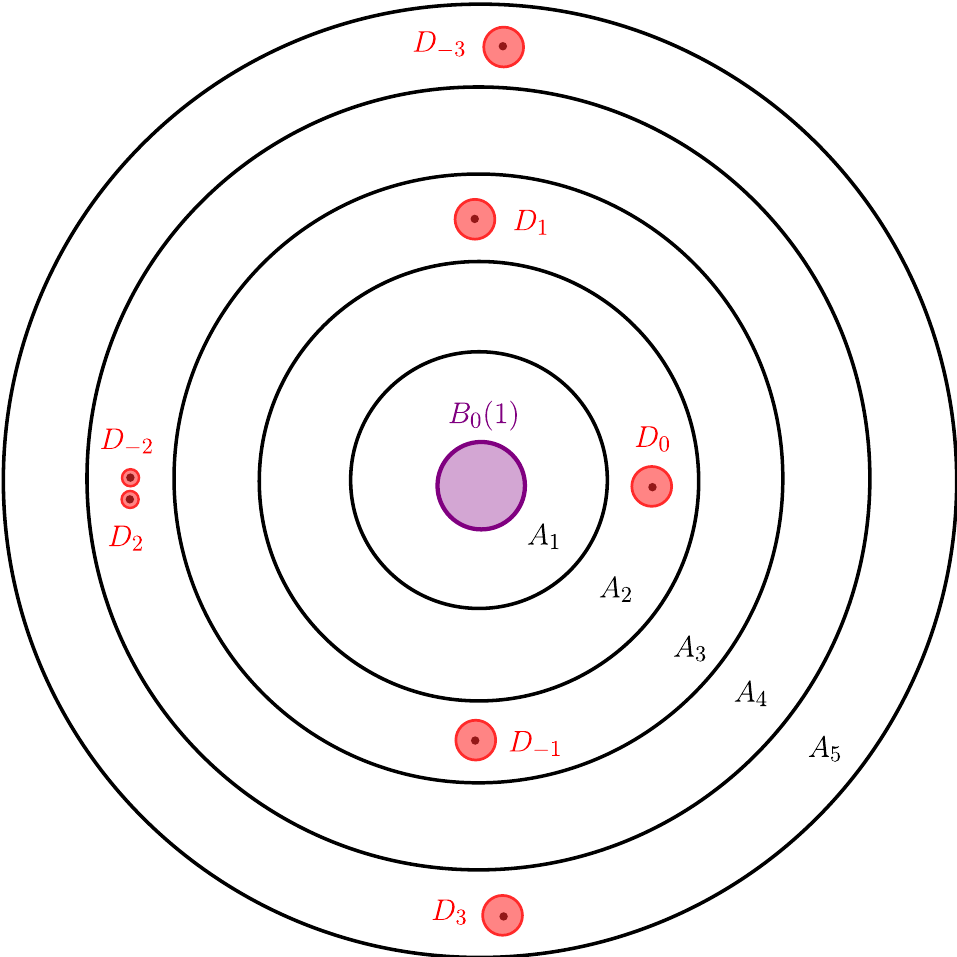} 
\caption{The surface $\Sigma$  when $\lambda$ is just larger than $1/4$ is obtained by removing the shaded disk interiors. The preferred boundary $\partial$ is $\partial B_0(1).$}
\label{fig:SigmaP}
\end{figure}

{
In what remains of this proof we orient $\mathbb{C}$ {opposite} to the standard convention for a technical reason.
}

We define a diffeomorphism $f\colon \C\setminus B_{0}^\circ(1)\to \C\setminus B_{0}^\circ(1)$ by defining it on each annulus $A_n\coloneqq \{z\in \C\mid n\leq |z|\leq n+1\}$ for $n\in \N$. The map $f$ will restrict to a diffeomorphism on $\Sigma$.  We now describe the construction of $f$.

On \textbf{$A_1$} we set $f\left((t+1)s\right)\coloneqq (t+1)se^{2\pi i t\lambda}$ for $t\in[0,1]$ and $s\in S^1\subset\C$.
On \textbf{$\bigcup_{n\geq 2} A_n$} we define $f|_{\bigcup_{n\geq 2} A_n}\colon \bigcup_{n\geq 2} A_n\to \bigcup_{n\geq 2} A_n$ as the composition of two maps $R_\lambda$ and $P$ (in fact, $f(A_2)\subseteq A_2\cup A_3$ and $f(A_n)\subseteq A_{n-1}\cup A_n\cup A_{n+1}$ for $n\geq 3$), which we describe below.

Firstly, we let $R_\lambda\colon \C\to\C$ be the rotation by $2\pi\lambda$, i.e.~$R_\lambda(z)=ze^{2\pi i \lambda}$. 
Secondly, $P$ is defined to be the identity outside compact regions $K_k$ defined as follows. We set $K_k\subseteq\C$ to be a small closed neighborhood that contains the convex hull of $R_\lambda(D_k) \cup D_{k+1}$. We choose the $K_k$ so that they do not overlap; this is possible by our choice of $\epsilon_k$. On $K_k$ we define $P$ to be a diffeomorphism that maps $R_\lambda(D_k)$ to $D_{k+1}$
given by a time 1 map of an appropriate vector field that is transverse to the circles around the origin $0\in\C$ and zero on the boundary of each $K_k$. Notice that this vector field is outwards transverse to the circles around the origin in $K_k$ for $k\geq 0$ and inwards transverse to the circles around the origin in $K_k$ for $k< 0$. In intuitive terms, $P$ pushes the disk $R_\lambda(D_k)$ onto the disk $D_{k+1}$ within $K_k$ in the most straightforward fashion possible. 

Outside of $A_1$, we set $f$ to be the composition of $P$ and $R_\lambda$, that is, $f(z)\coloneqq P(R_\lambda(z))$; see Figure~\ref{fig:Wagonwheel}.
\begin{figure}[h]
\centering
\includegraphics[width=1\textwidth]{
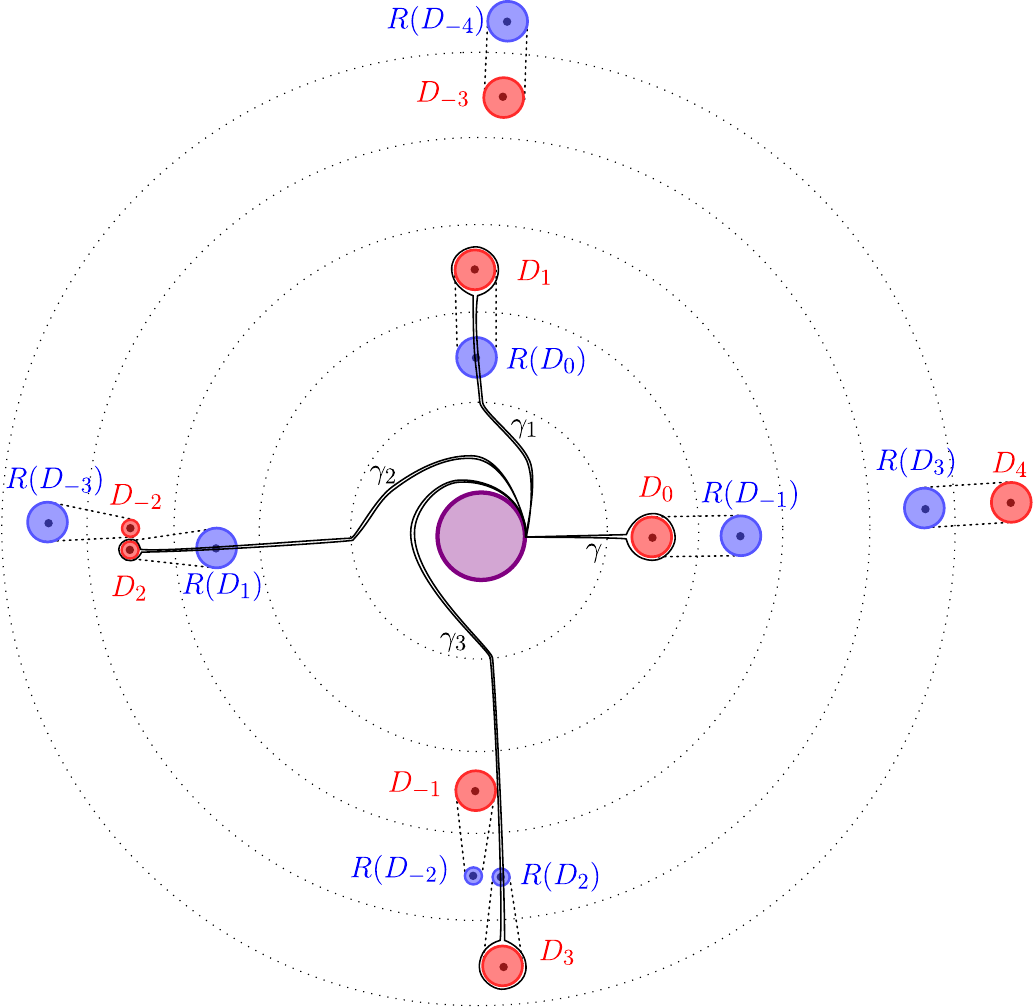} 
\caption{We illustrate the action of the wagon wheel map $\Phi$ of rotation $\lambda$ on $\Sigma$  when $\lambda$ is just larger than $1/4$. The map $\Phi$ first rotates by angle $\lambda$ in each annulus $A_i$ for $i\geq 2$. The images of the $D_i$ under this rotation are shown in blue. Then $\Phi$ `pushes' the blue disks onto the red disks along the regions $K_k$ indicated by the dotted lines. Also drawn is an embedded arc homotopic to $\gamma$ as well as its first three iterates under $\Phi$.}
\label{fig:Wagonwheel}
\end{figure}
This concludes the construction of $f$.

We define $\phi$ on $\Sigma$ to be the restriction of $f$ to $\Sigma$. This restriction is well-defined because $f$ fixes $\bigcup_k D_k^\circ$ as a set. We claim that the mapping class $\Phi \coloneqq [\phi]$ satisfies $\fd(\Phi)=\lambda$.

To see this claim, choose $I=[\gamma]$, where
$\gamma$ is the arc that starts at 1, follows the straight line to 2.25, then travels counter-clockwise once around the boundary of $D_0$ and subsequently follows the straight line back to 1; see Figure~\ref{fig:Wagonwheel}. To be explicit, $\gamma$ may be parametrized as follows:
\[\gamma\colon [0,1]\to \Sigma, t\mapsto 
\left\{\begin{array}{ll}
			1+5t & \text{if $t\in[0,0.25]$}\\
        	2.5+0.25e^{2\pi i 2t} & \text{if $t\in[0.25,0.75]$}\\
        	2.25-5(t-0.75) & \text{if $t\in[0.75,1]$}
		\end{array}\right .\]
		
Next we describe a path $\gamma_n\colon [0,1]\to\Sigma$ that describes the image of $I$ under $\Phi^n$, i.e.~we have $\Phi^nI=[\gamma_n]$. 
The path $\gamma_n$ is essentially given by
		the path $\gamma'_n\colon [0,1]\to \C$ given as the consecutive composition of the following five paths:
the arc that goes from $1$ to $2e^{2\pi i n\lambda}$ (wrapping around the origin by $n\lambda$) as parametrized by $(1+t)e^{2\pi i n\lambda t}$ for $t\in[0,1]$, the straight arc on the ray of angle $2\pi n\lambda$ from $2e^{2\pi i n\lambda}$ to $\partial D_n$, a loop that goes once around $\partial D_n$ counter-clockwise, the reverse of the straight arc described above, and finally the reverse of the arc described above that goes from $2e^{2\pi i n\lambda}$ to $1$ (wrapping around the origin by $-n\lambda$); see Figure~\ref{fig:Wagonwheel}.
The careful reader notices that the image of $\gamma_n'$ does not land in $\Sigma$. We change $\gamma'_n$ as follows to obtain $\gamma_n$: we choose $\gamma_n(t)$ as $\gamma'_n(t)$, except when $\gamma'_n(t)\in D_k^\circ$ for some $k\in\Z$ in which case we replace the straight subarc of $\gamma'_n$ that hits $D_k^\circ$ with the shortest circular arc (since $\gamma'_n$ never hits any $v_k$ there is a well-defined shortest one) that goes around the $\partial D_k$.
From our definition of $f$, we see that $\Phi^nI=[\gamma_n]$.
		
We show that
\begin{equation}\label{eq:plane-disc}
T^{\lfloor n\lambda\rfloor} \prec_I \Phi^n\prec_I T^{\lceil n\lambda\rceil}\end{equation}
for all $n\in\N$. This suffices since $a\prec_I b$ implies $\fd(a)\leq \fd(b)$ and thus it will follow from ~\eqref{eq:plane-disc} that
 \[\lfloor n\lambda\rfloor=\fd(T^{\lfloor n\lambda\rfloor})\leq \fd(\Phi^n)\leq\fd(T^{\lceil n\lambda\rceil})=\lceil n\lambda\rceil;\] hence, using homogeneity, $\frac{\lfloor n\lambda\rfloor}{n}\leq 
\fd(\Phi)\leq\frac{\lceil n\lambda\rceil}{n}.$

To see~\eqref{eq:plane-disc}, we note that it is equivalent to show $T^{-\lceil n\lambda\rceil}\Phi^n(I)<I<T^{-\lfloor n\lambda\rfloor}\Phi^n(I)$.
We write $J\coloneqq T^{-\lfloor n\lambda\rfloor}\Phi^n(I)$ and show $I<J$. ($T^{-\lceil n\lambda\rceil}\Phi^n(I)<I$ follows by a symmetric argument.)

We realize $T$ as the homotopy class of $D_\partial\colon \Sigma\to \Sigma$ defined by
\[D_\partial(z)=
\left\{\begin{array}{ll} ze^{2\pi i |z|}&\text{ if $1\leq |z|\leq 2$}\\
z &\text{ if $|z|\geq 2$}\end{array}\right. .\]

{Note, if we had oriented $\Sigma$ as a subset of $\mathbb{C}$ with the standard orientation, $T$ would be a left-veering map and have $\omega(T)=-1$. However, since our conventions flip the orientation, $T$ is right-veering.}

Hence, $J=[\beta_n]$, where $\beta_n=D_\partial^{-\lfloor n\lambda\rfloor}\circ\gamma_n$. To be explicit, $\beta_n$ equals $\gamma_n$ outside of $A_1$, while in $A_1$, $\beta_n$ starts from $1$ and goes to $2e^{2\pi i n\lambda}=2e^{2\pi i (n\lambda-\lfloor n\lambda\rfloor)}$ by wrapping $n\lambda-\lfloor n\lambda\rfloor$ around the origin, then leaves $A_1$ and when it reenters $A_1$ it goes from $2e^{2\pi i (n\lambda-\lfloor n\lambda\rfloor)}$ to $1$ by wrapping $-(n\lambda-\lfloor n\lambda\rfloor)$ around the origin. Concretely, the first part (out of five) of $\beta_n$ is parametrized by $(1+t)e^{2\pi i (n\lambda-\lfloor n\lambda\rfloor) t}$ for $t\in[0,1]$, while the last part is parametrized by the reverse of this. We see that the images of $\beta_n$ and $\gamma$ are disjoint except at $1$ (which is the start-point and end-point for both). Hence, since $\beta_n$ and $\gamma$ are not homotopic relative endpoints, we have that their lifts $\widetilde{\beta_n}$ and $\widetilde{\gamma}$ to the universal covering $\pi\colon\widetilde{\Sigma}\to \Sigma$ (with respect to a some base point $\widetilde{x_0}$ with $\pi(\widetilde{x_0})=1$) are disjoint away from $\widetilde{x_0}$. By Definition~\ref{def:totheright}, using the lifts $\widetilde{\beta_n}$, $\widetilde{\gamma}$ and  $n\lambda-\lfloor n\lambda\rfloor>0$, we find $J>I$.
\end{proof}

\subsection{Irrational rotations of the disk minus a Cantor set}\label{ssec:disc-cantorset}  

Let $\D$ be the closed disk unit disk in $\C$ and let $C$ be a Cantor set in the interior of $D$. We set $\Sigma_C\coloneqq \D\setminus C$. For this surface we will show that $\omega: \G(\Sigma_C) \to\mathbb{R}$ is surjective.

To do this, we construct for each $r\in\R/\Z$ an element $g\in \G(\Sigma_C)$ with $\fd(g)\in r$ using a homeomorphism of the circle with rotation number $r$ that has an invariant Cantor set. For $r\in \Q/\Z$, for the latter, one may choose a rotation. For irrational $r\in \R/\Z$, while envisioned by Poincar\'e, an explicit construction of such a map is due to Denjoy; see~\cite[Paragraph after Theorem~5.9]{Ghys_GroupsActingOnCircle} for an intuitive description and~\cite[Proposition~12.2.1]{KH_Intro_to_dynamical} for a more rigorous definition. From such a homeomorphism of the circle one finds an interesting mapping class of $\Sigma_C$ by viewing the circle as a subset of the interior of $\D$ and extending the above described homeomorphism with rotation number $r$ to all of $\Sigma_C$, which yields
a well defined homeomorphism when choosing $C$ to coincide with the invariant Cantor set in $S^1$.
Coming from the perspective of the DTC, it was rather natural to consider these mapping classes as potential examples with irrational DTC. However, we make no claim of originality here: this idea of producing mapping classes from Denjoy's example have been known to experts on big mapping class groups; see e.g.~\cite[Proof of Theorem~5.1]{CalegariChen22}. They are called \emph{irrational rotations of rotation number $r$}; see~\cite[Section 4]{BFT}.


\begin{prop}\label{prop:allofRCTree}  Let $\Sigma_C$ be closed disk from which we remove a Cantor set in the interior. Then $\fd(\G)=\R$.
\end{prop}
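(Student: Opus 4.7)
The plan is to realize rational and irrational numbers in $\fd(\G)$ by separate constructions. For rationals, invoke Proposition~\ref{prop:allofQ}, proved in Section~\ref{section:rationals}, which asserts $\mathbb{Q} \subseteq \fd(\G)$ for every infinite-type surface with a chosen compact boundary; note $\Sigma_C$ is of infinite type since $C$ is uncountable.

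For irrationals, fix $\lambda \in (0,1) \setminus \mathbb{Q}$ and follow the outline sketched before the statement: start with Denjoy's homeomorphism $f\colon S^1 \to S^1$ of rotation number $\lambda$ admitting an invariant Cantor set $C' \subset S^1$. Choose an embedding $\iota\colon S^1 \hookrightarrow \mathrm{Interior}(\mathbb{D})$ with $\iota(C') = C$, which is possible since any two tame Cantor sets in the interior of a disk are ambient homeomorphic. Extend $\iota \circ f \circ \iota^{-1}$ to a self-homeomorphism $\phi\colon \mathbb{D} \to \mathbb{D}$ fixing $\partial \mathbb{D}$ pointwise: on a collar $[0,1] \times S^1$ between $\iota(S^1)$ and $\partial \mathbb{D}$, interpolate from $f$ to the identity through an isotopy of circle homeomorphisms; on the disk bounded by $\iota(S^1)$, extend arbitrarily (for instance by coning off to a fixed interior point). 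Since $\phi(C) = C$ setwise, restricting yields a self-homeomorphism of $\Sigma_C = \mathbb{D} \setminus C$ fixing $\partial \mathbb{D}$; let $g \in \G$ be its mapping class.

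The main step is to verify that $\fd(g) = \lambda$ for a suitable normalization of the extension (any other normalization differs by post-composition with a power of $T$, which shifts $\fd$ by an integer). The strategy mirrors the proof of Proposition~\ref{prop:allofRPlane-discs}: choose an interval $I \in \mathfrak{I}$ represented by a simple arc from $x_0 \in \partial \mathbb{D}$ that enters the interior, loops once around a small disk $B \subset \mathrm{Interior}(\mathbb{D})$ whose interior meets $C$ in a nonempty clopen subset, and returns to $\partial \mathbb{D}$. For each $n$, describe $g^n(I)$ explicitly and lift it to the annular cover $\Sigma_C^{\mathbb{Z}}$ using the description of $\widetilde{T}$ from Section~\ref{sec:annular}. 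Because $f$ has rotation number $\lambda$, each iterate $f^n$ moves points of $\iota(C') = C$ on $\iota(S^1)$ by approximately $n\lambda \pmod{1}$, while the interpolation in the collar between $\iota(S^1)$ and $\partial \mathbb{D}$ contributes a bounded amount of extra winding. Combining these yields, analogously to equation~\eqref{eq:plane-disc}, the sandwich
\[
T^{\lfloor n\lambda \rfloor - c} \prec_I g^n \prec_I T^{\lceil n\lambda \rceil + c}
\]
for a constant $c$ independent of $n$. Homogeneity of $\fd$ and $\fd(T) = 1$ then force $\fd(g) = \lambda$.

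Finally, since $\fd(gT^k) = \fd(g) + k$ for every $k \in \mathbb{Z}$, combining the irrational construction (with $\lambda$ ranging over $(0,1) \setminus \mathbb{Q}$) and the rational case gives $\fd(\G) = \mathbb{R}$. The main obstacle will be the winding estimate: one must show that the part of $\phi$ supported off of $\iota(S^1)$ contributes only bounded perturbations to the lift counts, so that $\lambda$ survives as the asymptotic slope of $n \mapsto \lfloor g^n \rfloor_I$. Given the explicit picture of $\widetilde{T}$ in Section~\ref{sec:annular}, this is a careful but essentially routine lift-counting argument, paralleling and in spirit simpler than the one used for Proposition~\ref{prop:allofRPlane-discs}.
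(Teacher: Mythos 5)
Your plan follows the same essential route as the paper: realize $r$ via a Denjoy circle homeomorphism with an invariant Cantor set, extend to a disk homeomorphism fixing the boundary, and pin down the DTC by a sandwich $T^{\lfloor n\lambda\rfloor - c} \prec_I g^n \prec_I T^{\lceil n\lambda\rceil + c}$ computed from lifts. The two substantive divergences are worth naming. First, you split rational and irrational rotation numbers, invoking Proposition~\ref{prop:allofQ} for the former; the paper avoids this by handling every $r \in \R/\Z$ uniformly (choosing a literal rotation when $r$ is rational), which keeps the argument self-contained. Second, your test arc $I$ loops around a small disk $B$ meeting $C$ in a proper clopen piece, which is the arc choice used in the \emph{wagon-wheel} proof (Proposition~\ref{prop:allofRPlane-discs}); the paper instead takes a \emph{diameter} of the model disk $D_2$, arranged so that it crosses the invariant circle $S^1$ at two points $\pm 1$ lying in different components of $S^1 \setminus C$. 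That diameter choice is not cosmetic: it is precisely what lets one read off, directly in the lift coordinates $N = [0,1]\times\R$, that $\widetilde{\gamma}(1/4)$ and $\widetilde{\gamma_n}(1/4)$ lie in different components of $\{1\}\times\widetilde{\R\setminus C}$, from which disjointness of the lifts away from the start point follows with no further estimate. Your loop arc also crosses the invariant circle at two points in different complementary components (the two adjacent to $B\cap C$), so the same kind of argument is available, but the two crossings are adjacent rather than half a circle apart, and the bookkeeping for where $\phi_r^n$ carries the whole clopen piece is a bit heavier. Your phrase about the interpolation in the collar contributing ``a bounded amount of extra winding'' is slightly off target: the collar interpolation is exactly where the $\approx n\lambda$ turns accumulate; what is bounded is the discrepancy between the lift $\lambda_n$ of $\phi_r^n$ at a fixed point and $n\lambda$, which comes from the defining property of the rotation number. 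None of this is a gap, but it is the part of your proposal that would need to be made precise to close the argument, and the paper's diameter arc makes that step essentially free.
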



\begin{proof}[Proof of Proposition~\ref{prop:allofRCTree}]
Fix $r\in\R/\Z$. We show that $\G(\Sigma_C)$ for this surface contains an element $g$ with $\fd(g)\in r$. Since $\fd(T^kg)=\fd(g)+k$ for all $k\in\Z$, this suffices.

Let $\phi_r\colon S^1\to S^1$ be an orientation preserving homeomorphism with rotation number $r$ and an invariant Cantor set $C\subset S^1$. {Here, we use the classical definition of rotation number for a homeomorphism of the circle, namely $r(\phi_r)=\lim_{n\to\infty} \frac{\widetilde{\phi_r}^n(0)}{n}+\Z$, where $\widetilde{\phi_r}\colon\R\to\R$ is a lift of $\phi_r$ with respect to the universal covering $\R\to S^1$.} 

We consider the following diffeomorphic copy of $\Sigma_C$, which we denote by $\Sigma$:
the closed disk $D_2$ of radius $2$ around $0$ in $\C$ from which we remove $C\subset S^1\subset\C$, i.e.~$\Sigma=D_2\setminus C\subset D_2\subset \C$.
Let $\Phi_r\colon D_2\to D_2$ be any homeomorphism with $\Phi_{r|S^1\to S^1}=\phi$ that is the identity on the boundary of $D_2$. Note that all choices of $\Phi_r$ are isotopic up to composing with Dehn twists about the boundary. We set $g_r\coloneqq [\Phi_{r|\Sigma\to\Sigma}]$.
The careful reader notices that we defined $\G(\Sigma)$ using diffeomorphisms but $\Phi_{r|\Sigma\to\Sigma}$ is only a homeomorphism. However, we may and do assume that $\Phi_{r|\Sigma\to\Sigma}$ is a diffeomorphism,
since all homeomorphisms of surfaces are isotopic to a diffeomorphism (see~\cite[Theorem B]{Hatcher_torustrickforsurfaces} for the arguably easiest proof, but compare also with~\cite[Sect.~6: Theorem~4]{Moise_book77} for a classical albeit PL-reference).
For the reader preferring hands on constructions,
one may arrange by hand that $\Phi_{r|\Sigma\to\Sigma}$ is smooth by making use of the rotational symmetry and applying convolutions with mollifiers to concentric circles. 

We claim that $\fd(g_r)\in r$. To see this, take $I=[\gamma]$, where $\gamma\subset \Sigma$ is a diameter of $D_2$ that misses $C\subset S^1\subset D_2$. To make this more concrete, we assume without loss of generality, that $\pm 1\in \Sigma$ (in other words, we arrange for $\pm 1\not \in  C\subset S^1$) and that $\pm 1$ lie in different connected components of $S^1\setminus C$.
Take $I$ to be the homotopy class of the arc $\gamma$ going straight from $2$ to $-2$ (recall $\pm 2\in \partial \Sigma\subset D_2\subset \mathbb{C}$).
Explicitly, we set $\gamma\colon [0,1]\to \Sigma,$ parametrized by $\gamma(t)\mapsto 2-4t$. The fact that $\pm 1$ lie in different connected components of $S^1\setminus C$ assures that $\gamma$ is not boundary parallel relative endpoints.

Consider the universal cover $\pi\colon\widetilde{\Sigma}\to \Sigma$ and identify in $\widetilde{\Sigma}$  the preimage of $\{z\in \Sigma: |z|\geq1\}$ with $N\coloneqq([0,1]\times \R)\setminus \left(\{1\}\times\widetilde{S^1\setminus C}\right)$
such that $\pi((s,x))=(2-s)e^{2\pi ix}\in \Sigma$ for $(s,x)\in N\subset\widetilde{\Sigma}$. (Here, $\widetilde{S^1\setminus C}$ is the preimage of $S^1\setminus C$ under the universal covering $\R\to S^1, s\mapsto e^{2\pi i x}$.)
In particular, $\widetilde{\gamma}$---the lift of $\gamma$ to $(0,0)\in N\subset\widetilde{\Sigma}$---satisfies $\widetilde{\gamma}(t)=(4t,0)\in N$ for $t\leq \frac{1}{4}$.
We denote by $\widetilde{\Phi_r}$ the lift of $\Phi_r$ that is the identity on $\partial\widetilde{\Sigma}=\{0\}\times \R$. For all $n\in\N$, let $\lambda_n\in\R$ be such that $\widetilde{\Phi_r}^n((1,0))=(1,\lambda_n)$. By definition of rotation number of $\Phi_r$, we have $r=[\lambda]$, where $\lambda\coloneqq \lim_{n\to\infty}\frac{\lambda_n}{n}$. We claim that $\fd(g_r)=\lambda$.

To see this claim, it suffices to show $T^{\lfloor \lambda_n \rfloor -1}\prec_I g_r^n \prec_I T^{\lceil \lambda_n \rceil +1}$, which is equivalent to
$T^{-\lceil \lambda_n \rceil -1}g_r^n(I)<I< T^{-\lfloor \lambda_n \rfloor +1}g_r^n(I)$.
We argue that $I<T^{-\lfloor \lambda_n \rfloor +1}g_r^n(I)$ (the other inequality follows by a symmetric argument). We write $J\coloneqq T^{-\lfloor \lambda_n \rfloor +1}g_r^n(I)$ for brevity.

Let $f\colon \Sigma\to \Sigma$ be an explicit representative of $T$ that is the identity outside a small neighbourhood of the boundary.
We consider lifts of $f^{-\lfloor \lambda_n \rfloor +1}\circ\Phi_r^n(\gamma)$, denoted by $\widetilde{\gamma_n}$,
starting at $(0,0)$, and note that $\widetilde{\gamma_n}(\frac{1}{4})=(1,\lambda_n-\lfloor \lambda_n \rfloor+1)$. In particular, $\widetilde{\gamma}(\frac{1}{4})$ and $\widetilde{\gamma_n}(\frac{1}{4})$ are on different components of $\{1\}\times \widetilde{\R\setminus C}$ and $\widetilde{\gamma_n}(\frac{1}{4})$ is to the right of $\widetilde{\gamma}(\frac{1}{4})=(1,0)$ on $\{1\}\times \R$ since $(\lambda_n-\lfloor \lambda_n \rfloor+1)-0\geq 1$.
Given that $\widetilde{\gamma}(\frac{1}{4})$ and $\widetilde{\gamma_n}(\frac{1}{4})$ are on different components of $\{1\}\times \widetilde{\R\setminus C}$,
we have $\widetilde{\gamma}$ and $\widetilde{\gamma_n}$ are disjoint away from $N$; in other words, $\widetilde{\gamma}(t)\neq\widetilde{\gamma_n}(t)$ for $t\geq \frac{1}{4}$. Here, we use that $\gamma$ enters $D_1\setminus C=\{z\in \Sigma\mid |z|\leq 1\}$ at a different component of $S^1\setminus C$ than it exits $D_1\setminus C$.

We homotope $\widetilde{\gamma_n}$ to an arc we denote by $\widetilde{\gamma_n}'$ and define as follows. $\widetilde{\gamma_n}'$ is equal to $\widetilde{\gamma_n}$ for $t\geq \frac{1}{4}$ and the it is equal to the straight line segment in $N$ from $(0,0)$ to $\widetilde{\gamma_n}(\frac{1}{4})$ for $t\leq\frac{1}{4}$
(i.e.~$\widetilde{\gamma_n}'(t)=(4t,4t+\lambda_n-\lfloor \lambda_n \rfloor+1)\in N\subset\widetilde{\Sigma}$ for $t\leq\frac{1}{4}$). 
By construction we have that $\widetilde{\gamma}$ and $\widetilde{\gamma_n}'$ are disjoint except at the start point $\widetilde{x_0}\coloneqq (0,0)$.
Hence, $\widetilde{\gamma}$ and $\widetilde{\gamma_n}'$ are representatives of $\widetilde{I}$ and $\widetilde{J}$ that are disjoint away from $\widetilde{x_0}$ and, by Definition~\ref{def:totheright}, $J>I$.
\end{proof}

\subsection{Proof of Theorem~\ref{thm:allofRprecise}}\label{subsec:thmallofR}


\begin{proof}[Proof of Theorem~\ref{thm:allofRprecise}]

First we note that by Proposition \ref{prop:allofQ}, $\omega(\G(\Sigma))$ contains $\mathbb{Q}$. Thus, we only need to show that the image of $\omega$ contains $\mathbb{R}\setminus\mathbb{Q}$.

Fix $\lambda\in \mathbb{R}\setminus\mathbb{Q}$. First suppose that $\Sigma$ is of type (\ref{it:b}). Here, $\Sigma=\Sigma_P\cup S_i$ where $\Sigma_P$ is the surface in Proposition \ref{prop:allofRPlane-discs} and each $S_i$ is homemomorphic to $S$. We can extend the wagon wheel map $\Phi$ with $\omega(\Phi)=\lambda$ defined on $\Sigma_P$ to $\Sigma$. We call this map $\Phi_\Sigma$; it sends $S_i$ to $S_{i+1}$. To see that $\omega(\Phi_\Sigma)=\lambda$ we can consider the same arc $\gamma$ as in the proof of Proposition \ref{prop:allofRPlane-discs}.

Now suppose that $\Sigma$ is of type (\ref{it:a}). Cut $\Sigma$ along the separating circle defining the connected sum. Then we have two pieces, $\Sigma_C$ and $\Sigma_R$,  $\Sigma_C$ is homeomorphic to a disk with a Cantor set and an open disk $B$ removed, and $\Sigma_R$ is the remainder of the surface. 

Up to homeomorphism, we can take $\Sigma_C$ to be the closed disk $D_2$ of radius $2$ around $0$ in $\C$ from which we remove both the Cantor set $C\subset S^1\subset\C$ and the open disk $B$ centered at the origin of radius $1/4$. Up to isotopy the map $g$ with $\omega(g)=\lambda$ defined in Proposition~\ref{prop:allofRCTree} can be chosen to be the identity outside of a small annular neighborhood of $S^1$; hence, it fixes $B$. Now we restrict $g$ to $\Sigma_C$ and extend $g$ to a map $g_\Sigma$ on $\Sigma$ by the identity on $\Sigma_R$.

To see that $\omega(g_\Sigma)=\lambda$ we consider the arc $\gamma$ going straight from 2 to $-2$; this is not contained in $\Sigma_C$ so we replace the straight subarc of $\gamma$ inside of $B$ with a circular arc going around $\partial B$. Using this modified $\gamma$, the argument from Proposition \ref{prop:allofRCTree} again shows that $\omega(g_\Sigma)=\lambda$.
\end{proof}

\section{Realizing any rational number}
\label{section:rationals}

In this section we prove Proposition \ref{prop:allofQ}, which states that
for all $\Sigma$ of infinite type with a compact boundary component $\partial$ and each $q\in \Q$, there exists $g\in\G(\Sigma)$ such that $\fd(g)=q$.

\begin{proof}[Proof of Proposition \ref{prop:allofQ}]
For every $n \geq 2, n \in \mathbb{N}$, we let $D_{n}$ be the disk with $n$ punctures $p_{1},\dots, p_{n}$ and $B_{n}$ its mapping class group, namely the braid group on $n$ strands.
We use that for every rational number $q$ there exists an $n\in\N$ and pure braid $\beta_q\in B_n$ such that $\fd(\beta_q)=q$. We recall that a pure braid is an element that induces the identity on the set of punctures (equivalently the space of ends) of $D_n$. Since, as far as we know, such examples are not provided in the literature, we provide a $\beta_q$ for all $q\in\Q$ in Example~\ref{Ex:purebraidwithprescribedFDTC} below.

We first consider an intermediate surface which we call $D_n'$ obtained from $D_n$ by expanding each puncture $p_{i}$ to a small boundary component $P_{i}$ , and we choose an extension of $\beta_q$ to a homeomorphism $\beta_q'$ of $D_n'$ that fixes all boundary components pointwise. The point is now that, for all $n\in\N$, up to diffeomorphism, each surface $\Sigma$ of infinite type with a chosen compact boundary component $\partial$ is given as the connected sum of the closed disk $\D$ with $n$ surfaces that are not the two-sphere, where the boundary coming from $\D$ is the chosen boundary component. We explain this in the next paragraph.

Without loss of generality, we may and do assume that $\partial \Sigma=\partial$ (otherwise, remove all boundary components except $\partial$, find the connected sum decomposition and put them back as needed). 
If $\Sigma$ has infinite genus, we can view it as a connected sum where one piece is a surface with genus $n-1$ and one boundary component $\partial$ and the other piece is another surface $R$. We therefore can construct $\Sigma$ by gluing in a one-holed torus to each $P_{1}, \ldots, P_{n-1}$ on $D_{n}'$, and gluing in $R$ (after removing an open disc) to $P_{n}$. If instead $\Sigma$ has finite genus, its space of ends must be infinite (since $\Sigma$ is of infinite type). If the set of isolated ends is infinite, we can choose $n-1$ isolated ends and we can view $\Sigma$ as a connect sum of an $n-1$-punctured disk with one boundary component and some other surface $R$. We therefore can construct $\Sigma$ by gluing in a once-punctured disk to each $P_{1}, \ldots, P_{n-1}$, and gluing in $R$ (after removing an open disc) to $P_{n}$. Finally, if the genus is finite and the set of isolated ends is finite, the space of ends must be homeomorphic to a Cantor set union a finite set. In this case, we divide the set of punctures into $n-1$ clopen sets 
each of which is homeomorphic to the Cantor set, and one clopen set homeomorphic to  a Cantor set union a finite set. We therefore can construct $\Sigma$ by gluing in a disk with a Cantor set of punctures removed to each $P_{1}, \ldots, P_{n-1}$, and gluing in whatever remains of $\Sigma$ to $P_{n}$.

In the above paragraph, we constructed $\Sigma$,
 up to a diffeomorphism, from gluing $n$ disjoint surfaces with a special boundary component to $D_n'$. In particular, via this construct, $D_n'$ naturally is a submanifold of $\Sigma$. We have that $\beta_q'$ extends naturally to a map $\Phi$ on $\Sigma$. Moreover, the map fixes all boundary components, so $[\Phi]\in \G(\Sigma)$.

Finally, it remains to observe that $\fd(\Phi)=q$ by construction.
This follows since any essential arc $a$ in $D_n$ that starts on $\partial D_n$, but for concreteness, say an arc that starts and ends on $\partial$ and loops once around the first puncture, has the property that the image $a'$ in $\Sigma$ (under the natural inclusion) satisfies that $\Phi^k(a')$
is the image (under the natural inclusion) of
$\beta_q^k(a)$ .
 \end{proof}

In the calculation below, we freely use basic notions from braid theory (such as the standard Artin generators $\sigma_i$) as e.g.~explained in~\cite{Birman_74_BraidsAMSStudies}.

\begin{Example}\label{Ex:purebraidwithprescribedFDTC}
For a fixed integer $n\geq 3$, we consider the pure braid \[\beta_n\coloneqq\sigma_1^2\sigma_2^2\dots\sigma_{n-1}^2\in B_n.\]
A calculation, provided below, shows that the FDTC of $\beta_n$ is $\frac{1}{n-2}$. Hence, for all pairs of integers $k,l$ with $k\geq 1$, we have that $\beta_\frac{l}{k}\coloneqq \beta_{k+2}^l\in B_{k+2}$ satisfies
$\fd(\beta_\frac{l}{k})= \frac{l}{k}$.

We first show $(n-2)\fd(\beta_n)=\fd(\beta_n^{n-2})\leq 1$. We use that 
$\beta_n^{n-2}=\alpha_l\alpha_r$, where
\begin{align*}
\alpha_l&=(\sigma_1^2\sigma_2^2\dots\sigma_{n-2}^2)(\sigma_1^2\sigma_2^2\dots\sigma_{n-3}^2)\dots(\sigma_1^2\sigma_2^2)(\sigma_1^2)\\
\alpha_r&=(\sigma_{n-1}^2\sigma_{n-2}^2\dots\sigma_{2}^2)(\sigma_{n-1}^2\sigma_{n-2}^2\dots\sigma_{3}^2)\dots(\sigma_{n-1}^2\sigma_{n-2}^2)(\sigma_{n-1}^2).
\end{align*}
Since $\fd(\alpha_l)=\fd(\alpha_r)=0$ (as they are both braids that are given in terms of braid words that miss a standard generator or, put differently, they leave an $I\in\mathfrak{I}$ invariant, namely the one given by an arc that separates off the last, respectively the first, puncture), this yields $\fd(\beta_n^{n-2})\leq \fd(\alpha_l)+\fd(\alpha_r)+1 = 0+0+1$ as desired.

Finally, we check that $(n-2)\fd(\beta_n)=\fd(\beta_n^{n-2})\geq 1$. We notice that by deleting positive generators in the braid word $\beta_n^{n-2}$, we obtain the word $$\sigma_{1}^2\sigma_{2}\dots\sigma_{n-2}\sigma_{n-1}^{2}\sigma_{n-2}\dots\sigma_{2}$$ which is conjugate to $\delta_n\overline{\delta_n}$, where
\[\delta_n\coloneqq\sigma_1\sigma_2\dots\sigma_{n-2}\sigma_{n-1}\quad\text{and}\quad \overline{\delta_n}\coloneqq\sigma_{n-1}\sigma_{n-2}\dots\sigma_{2}\sigma_{1}.\]
The braid $\delta_n\overline{\delta_n}$ is rather well-known to satisfy $\fd(\delta_n\overline{\delta_n})=1$ (see \cite[Section 11.6]{malyutin2005twist}. In conclusion, we have $\fd(\beta_n^{n-2})\geq\fd(\delta_n\overline{\delta_n})=1$, where in the inequality we used that deleting positive generators in braids never increases the FDTC (see \cite[Lemma 5.2]{malyutin2005twist} or Corollary~\ref{cor:insertrvincreasesdtc}).

\end{Example}

\section{Tameness and irrational rotation behavior of the wagon wheel map}\label{sec:tameness}

As discussed in the introduction, we believe the wagon wheel map to be the first instance of irrational rotation behavior beyond the construction of an irrational rotation based on Denjoy's examples. In this section we make this precise. We note that in this section 
we restrict to surfaces that do not have boundary. This choice is made so that we can directly work with the setup for big mapping class groups from~\cite{BFT}.

We first show that wagon wheel maps and variants thereof are tame (in the sense of~\cite{BFT}); see Proposition~\ref{prop:wagonwheelistame}. A self-diffeomorphism $f\colon \Sigma\to \Sigma$ is said to be \emph{tame}, if for all homotopy classes of closed curves $\alpha$ and $\beta$ in $\Sigma$ there exists a constant $C(\alpha,\beta)$ such that the geometric intersection number of $f^n(\alpha)$ and $\beta$ is less than $C(\alpha,\beta)$ for all $n\in\N$~\cite[Lemma~4.1]{BFT}.
For context, we note that ~\cite{BFT}  suggest studying tame maps as an interesting subclass of big mapping class groups on which there is hope to have a Nielsen-Thurston type classification. In fact, the authors of~\cite{BFT} propose as a conjecture that all tame maps ought to be made up of pieces that are translations, periodic maps or maps that have irrational rotation behavior~\cite[Conjecture~B]{BFT}. The term ``irrational rotation behavior'' is defined by example: namely, by considering maps that arise by extending a Denjoy homeomorphism of the circle with irrational rotation number and an invariant Cantor set to a surface of infinite type. In fact, the authors show that if maps are what they call extra tame, then they are made up of translations and periodic maps~\cite[Theorem~A]{BFT}. 

We check that while that wagon wheel maps are tame, they are not extra tame (see Remark~\ref{rem:notextratame}), making them candidates  for ``irrational rotation behavior'' in the context of~\cite[Conjecture~B]{BFT}.
We then go on to propose a definition of irrational rotation behavior by virtue of using the DTC to define a rotation number and check that with this definition, the wagon wheel maps and the irrrational rotations obtained by extending a Denjoy homeomorphism do indeed have irrational rotation behavior; see Propositions ~\ref{prop:irrbehforirrarots} and ~\ref{prop:irrbehforwagonwheels}.

Before doing all of the above, we fix notation and make precise what wagon wheel maps we are considering.
\subsection{The wagon wheel map on surfaces constructed from $\Sigma_P$}\label{subsec:constr}
We let $\Sigma_P$ be the surface from Proposition~\ref{prop:allofRPlane-discs} and
let $\Sigma$ be any surface that is obtained from $\Sigma_P$ as follows: glue a closed disk or a once-punctured closed disk to $\Sigma_P$ along $\partial$ and glue copies of a connected surface $S$ with one boundary component $\partial_S$ that is not a disk to all the other boundary components. Formally, by picking an identification of all boundaries $\partial D_k$ with $\partial S$ and an identification of $\partial$ with $\partial D$, where $D$ is either a closed disk or a closed disk with a puncture, we set
\[\Sigma\coloneqq \Sigma_P\cup D\cup \Z\times S/\sim,\] where the quotient-relation $\sim$ is given by gluing along the above described identifications.
{For example, the result of gluing in a once-punctured closed disk to each boundary component of $\Sigma_P$ or the result of gluing in a once-punctured closed disk along all boundary components except $\partial$ and filling in a disk at $\partial$ are two examples of $\Sigma$'s that arise. }

Note that for every $\lambda\in\R$, the wagon wheel map $\phi\colon\Sigma_P\to\Sigma_P$ extends to $\Sigma$, by extending $\phi$ to be the identity on $\Sigma\setminus \Sigma_P$ up to the obvious permutation, that is, if $[z]\in\Sigma$ then:
\[[z]\mapsto
\left\{\begin{array}{ll}
			[\phi(z)]\in \Sigma & \text{if $z\in\Sigma_P$}
			\\
        	{[(k+1, x)]}\in \Sigma & \text{if $z=(k,x)\in \{k\}\times S$}
        	\\
		\end{array} \right.\]

\subsection{The wagon wheel maps are tame but not extra tame}
 
\begin{prop}\label{prop:wagonwheelistame}For an irrational number $\lambda$, let $\Sigma$ be any of the surfaces as defined in Section~\ref{subsec:constr} and let $\phi$ be the wagon wheel map of rotation $\lambda$ on it, i.e.~the result of extending the wagon wheel map of rotation $\lambda$ from $\Sigma_P$ to $\Sigma$. Then $\phi$ is tame.  
\end{prop}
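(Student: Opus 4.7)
The strategy is to show that, at the level of mapping class groups, $[\phi^n]$ acts by cyclically shifting the indices of the glued surfaces $S_k$ and the filled-in disks $D_k$ by $n$, up to an isotopy of $\Sigma$. Since any closed curve is compact and hence only ``sees'' finitely many such indices, iterating $\phi$ eventually translates all relevant topological features of $\alpha$ outside any fixed compact region containing $\beta$, forcing $i(\phi^n(\alpha),\beta)=0$ for $|n|$ large.

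First, since $\alpha,\beta\subset\Sigma$ are compact, fix $M\in\N$ large enough that both are contained in the compact subsurface
\[\Sigma_M := \bigl(\Sigma_P\cap\{|z|\le M+3\}\bigr)\cup D\cup\bigcup_{|k|\le M}S_k,\]
where $D$ denotes the disk or once-punctured disk filling $\partial$. Decompose $\phi=\pi\circ\rho$, where $\rho$ is the ``pure rotation'' part (the $\lambda$-fractional twist on $A_1$ together with genuine rotation by $\lambda$ on each outer annulus $A_m$, extended by the identity on $D$) and $\pi$ is the composition of pushes moving each $R_\lambda(D_k)$ onto $D_{k+1}$. The key observation is that $\rho$ is isotopic to the identity in $\Sigma$: scaling $\lambda\mapsto t\lambda$ in both the $A_1$-twist and the outer rotations (and keeping the identity on $D$) yields a family $\rho_t$ interpolating between $\mathrm{id}$ and $\rho$. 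This uses crucially that $D$ is simply connected (or a once-punctured disk), so that a rotation of $D$ is isotopic to the identity via the family of rotations.

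Consequently, since $\phi^n=\pi_n\circ\rho^n$ for an appropriate product $\pi_n$ of conjugates of $\pi$, and $\rho^n$ is isotopic to the identity in $\Sigma$, one has $[\phi^n]=[\pi_n]$ in the mapping class group. Using the explicit formula $\phi^n(D_k)=D_{k+n}$ (which follows by induction from $\phi(D_k)=D_{k+1}$), the mapping class $[\pi_n]$ acts by the cyclic shift $D_k\mapsto D_{k+n}$ (and correspondingly $S_k\mapsto S_{k+n}$). Therefore, up to homotopy in $\Sigma$, $\phi^n(\alpha)$ is obtained from $\alpha$ by replacing each feature involving some $S_k$ or some loop around $\partial D_k$ with the corresponding feature at index $k+n$. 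Because $\alpha\subset\Sigma_M$ involves only indices with $|k|\le M$, for $|n|>2M$ every shifted feature has index $|k+n|>M$ and therefore lies outside $\Sigma_M$; one may then choose a representative of $\phi^n(\alpha)$ disjoint from $\beta$, yielding $i(\phi^n(\alpha),\beta)=0$.

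For the finitely many $|n|\le 2M$, $i(\phi^n(\alpha),\beta)$ is a finite geometric intersection between two compact curves. Setting $C(\alpha,\beta):=\max_{|n|\le 2M}i(\phi^n(\alpha),\beta)$ gives the desired uniform bound, proving tameness. The main obstacle is verifying carefully the ``cyclic shift up to isotopy'' claim: one must check that the mapping class $[\pi_n]$ admits a representative supported in a neighborhood of the disks $D_k$ with $|k|\le M$ together with their images $D_{k+n}$, and that this representative realizes the index shift on all relevant homotopy classes. This requires careful bookkeeping since the pushes appear conjugated by rotations in the factorization of $\phi^n$; the conjugations are absorbed into the isotopy killing $\rho^n$, but the precise identification of the supports of the various conjugated pushes with the ``straight path'' from $D_k$ to $D_{k+n}$ needs to be made explicit.
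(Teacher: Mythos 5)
Your proposal has two genuine gaps, and the second is fatal even if the first were repaired.

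\textbf{First gap: the factorization does not pass through self-maps of $\Sigma$.} You write $\phi=\pi\circ\rho$ with $\rho$ the rotation and $\pi$ the pushes, and you claim $\rho$ is isotopic to the identity \emph{in $\Sigma$}. But $\rho$ is not a self-map of $\Sigma$: outside $A_1$ it is the literal rotation $R_\lambda$, which carries the removed disks $D_k$ to $R_\lambda(D_k)$, and these are not among the removed disks. The same is true at every time $t$ of your interpolation $\rho_t$. So $\rho$ and $\rho_t$ are homeomorphisms of the ambient plane, not of $\Sigma$, and the equation $[\phi^n]=[\pi_n]$ in $\G(\Sigma)$ is not meaningful. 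Only the composite $\phi=P\circ R_\lambda$ preserves $\Sigma$, because $P$ returns $R_\lambda(D_k)$ to $D_{k+1}$.

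\textbf{Second gap: the conclusion $i(\phi^n(\alpha),\beta)=0$ for $|n|$ large is simply false.} Take $\Sigma$ from Section~\ref{subsec:constr} with a once-punctured disk glued to $\partial$, so $\partial$ becomes an isolated planar end fixed by $\phi$. Let $\alpha$ be a curve separating $\{\partial,\,S_0\}$ from the rest and $\beta$ a curve separating $\{\partial,\,S_{-1},\,S_0,\,S_1\}$ from the rest. Then $\phi^n(\alpha)$ separates $\{\partial,\,S_n\}$ from the rest. For all $n\ge 2$, the two partitions cross: $\{\partial,\,S_n\}\cap\{\partial,\,S_{-1},\,S_0,\,S_1\}=\{\partial\}$ is nonempty, $S_n$ is on the other side of $\beta$, $S_0$ is on the other side of $\phi^n(\alpha)$, and the end at infinity lies on the ``outer'' side of both. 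A crossing pair of separating partitions in a planar surface forces $i(\phi^n(\alpha),\beta)\ge 2$, so the intersection number does not vanish for large $n$. The curve $\phi^n(\alpha)$ cannot be pushed off $\Sigma_M$: it must enclose the fixed end $\partial$ together with the far-away $S_n$, so it is forced to run radially through any compact annulus where $\beta$ lives.

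This is precisely the phenomenon the paper's proof is built to control. Rather than trying to push $\phi^n(\alpha)$ away, the paper decomposes $\alpha$ into a concatenation of paths from a $\phi$-invariant set $\Gamma$ (a loop around $\partial$, the radial arcs $\phi^k(\rho_0)$, and arcs inside the $S_i$'s), observes that $\phi^n(\alpha)$ is a concatenation of the \emph{same number} of such paths, and bounds $|\beta\cap\phi^n(\alpha)|$ piece by piece: the $S_i$-pieces miss $\beta$ for $n$ large, while each of the fixed number $l$ of radial pieces contributes a bounded number of intersections with the compact curve $\beta$. The resulting bound is $O(l)$, not zero, which is exactly what the counterexample above demands. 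Your index-shift intuition is correct for the ``local'' features of $\alpha$ (the pieces living in the $S_i$), but the radial arcs connecting them to the fixed end $\partial$ persist under iteration and carry a nonzero, though bounded, intersection with $\beta$.
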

\begin{proof}
Let $S_i$ denote the surfaces we glue along $\partial D_i$. Let $\alpha$ and $\beta$ be curves in $\Sigma$. We want to show that $i(\phi^n(\alpha),\beta)$ is bounded for all $n>0$, where $i(\cdot,\cdot)$ denotes the geometric intersection number.

Throughout the proof we will refer to notation as in the proof of Proposition~\ref{prop:allofRPlane-discs}, and Figure~\ref{fig:SigmaP}. Let $\rho_0$ be the oriented straight line path along the x-axis starting at $(1,0)$ and ending at $(2.25,0)$ on the boundary of $D_0$. Let $\delta_{\partial}$ be the path starting at $(1,0)$ and going around $\partial$ once around in the clockwise direction. Let $\Gamma_{\partial}=\{\delta_{\partial}\}$, $\Gamma_\rho=\{\phi^k(\rho_0):k\in\mathbb{Z}\}$, and let $\Gamma_i$ be the set of oriented paths contained entirely in $S_i \cup \partial S_{i}$, and $\Gamma_S=\cup_i \Gamma_i$. Finally we let

  $$\Gamma=\Gamma_{\partial}\cup\Gamma_\rho \cup \Gamma_S$$
  
  By definition of the paths in $\Gamma$, the sets $\Gamma_{\partial}$, $\Gamma_\rho$ and $\Gamma_{S} $
 are invariant under $\phi$, and further $\phi(\Gamma_i)=\Gamma_{i+1}$.

The key observation is that any curve, in particular $\alpha$, is freely homotopic to a concatenation of paths in $\Gamma$  (and their inverses) which we also call $\alpha$. Then $\phi^n(\alpha)$ is also a concatenation of the same number of paths in $\Gamma$ for each $n>0$ since $\phi$ acts on $\Gamma$.

Now we consider the intersection of $\beta$ with $\{\phi^n(\alpha)\}$. First we isotope $\beta$ so that it intersects $\Gamma_\rho$ transversely and it lies outside of $B_0(1)\cup A_1$. Hence, $\beta\cap \Gamma_\partial=\emptyset$. Since $\beta$ is a compact curve, it intersects only finitely many $S_i \cup \partial S_{i}$. Hence, for large enough $n$ we have that the set $\beta \cap \phi^n(\alpha)\cap \Gamma_S $ is empty.

Now we compute $\beta \cap \phi^n(\alpha)\cap \Gamma_\rho.$ For all $n>0$ the number of paths in $\phi^n(\alpha)$ which are in $ \Gamma_\rho$ is constant, say $l$, since $\phi$ acts on $\Gamma_{\rho}$. Since $\beta\cap A_1=\emptyset$, the part of $\phi^n(\alpha)\cap \Gamma_\rho$ which might intersect $\beta$ are radial straight lines (there are $l$ of them).   The intersection of any set of $l$ radial lines with $\beta$ is bounded (since $\beta$ is compact). Therefore  $\beta \cap \phi^n(\alpha)\cap \Gamma_\rho$ is bounded in terms of $\beta$ and $l$.

We have shown that $\beta \cap \phi^n(\alpha)\cap \Gamma$ is bounded and therefore so is $\beta\cap \phi^n(\alpha)$. Finally, $|\beta\cap \phi^n(\alpha)|\geq i(\phi^n(\alpha),\beta)$.\end{proof}

\begin{Remark}\label{rem:notextratame}
While all the maps $\phi$ from Proposition~\ref{prop:wagonwheelistame} are tame, they are never {extra tame} as defined in~\cite[Section~4]{BFT}. In particular, they do not fall into the class of maps that can be decomposed into translations pieces and periodic pieces as established in~\cite[Theorem~A]{BFT}. We explain why, in terms of the notion of limit set of a curve as defined in~\cite[Section~4]{BFT} without recalling the definition since we make no use of these concepts outside of this remark.

A tame diffeomorphism $\phi\colon\Sigma\to\Sigma$ is \emph{extra tame} if for all closed curves $\alpha\subset \Sigma$ their limit set is finite. For tame maps this limit set is either a  collection of closed curves or a family of lines, where by a line we mean the image of a proper embedding of $\R$ into $\Sigma$. Note that in~\cite{BFT} there is always a choice of background hyperbolic metric; hence, lines and closed curves can, up to isotopy, be thought of geodesics. In the next paragraphs we describe the limit set of $\phi$ for an explicit choice of $\alpha$: it turns out to be an uncountable set of lines. In particular, $\phi$ is not extra tame.  

We first consider the case when $\Sigma$ arises by gluing a punctured disk to $\partial$.
To be explicit  take $\Sigma$ to be the result of taking $\C\setminus \left(0\cup\bigcup_{k\in\Z} D^\circ_k\right)$ (see Figure~\ref{fig:SigmaP} where rather than removing the interior of $B_0(1)$, one only removes the origin)
and gluing $\Z$ many copies of the same surface to $\partial D^\circ_k$ for all $k\in\Z$. We consider the simple closed curve $\alpha$ given as the boundary of the convex hull (in $\C$) of $B_0(1)$ and $D_0$ (see Figure~\ref{fig:SigmaP} to visualize $\alpha$). Up to isotopy, $\alpha$ is the curve that goes once around $\partial$, then follows the straight path $\rho_0$ from $(1,0)\in\partial$ to $(2.25,0)\in\partial D_0$, then goes once around $\partial D_0$, and closes up by traversing backwards $\rho_0$. By considering that, up to homotopy, $\phi^k(\alpha)$ is the curve that goes once around $\partial$, then follows the straight path $\phi^k(\rho_0)$ that starts at $(1,0)\in\partial$ and ends on $\partial D_k$, then goes once around $\partial D_k$, and closes up by traversing backwards along $\phi^k(\rho_0)$, one can check that the limit set of $\alpha$ (with respect to the correctly chosen hyperbolic metric) consists of all the lines $l_\delta$, where $\delta\in\R\setminus \{k\lambda\mid k\in\Z\}$ and $l_\delta$ is defined as follows. Take the straight ray $r_\delta$ from the origin $0\in\C$ to infinity of angle $\delta$ and define $l_\delta$ as the result of replacing, for all $k\in\Z$, the arc $r_\delta\cap D_k$ with the shortest arc in $\partial D_k$ with the same end points.

We next consider the case when $\Sigma$ arises by gluing a disk to $\partial$.
To be explicit  take $\Sigma$ to be the result of taking $\C\setminus \left(\bigcup_{k\in\Z} D^\circ_k\right)$ (see Figure~\ref{fig:SigmaP} where rather than removing the interior of $B_0(1)$, one leaves it in)
and gluing $\Z$ many copies of the same surface to $\partial D^\circ_k$ for all $k\in\Z$. We consider the simple closed curve $\alpha$ given as the boundary of the convex hull (in $\C$) of $D_0$ and $D_1$ (see Figure~\ref{fig:SigmaP} to visualize $\alpha$). To understand $\phi^n(\alpha)$, it turns out to be easiest to work with a different representative of the same isotopy class: up to homotopy, $\alpha$ is the curve that follows the straight path $\rho_0$ from $(1,0)$ to $(2.25,0)\in\partial D_0$, then goes once around $\partial D_0$, traverses backwards $\rho_0$, then follows $\phi(\rho_0)$, then goes once around $\partial D_1$, and finally closes up at $(1,0)$ by traversing $\phi(\rho_0)$ backwards.

This latter description allows us to readily check that, up to homotopy, $\phi^k(\alpha)$ is the curve that that follows the path $\phi^k(\rho_0)$, then goes once around $\partial D_k$, traverses backwards along $\phi^k(\rho_0)$, then follows $\phi^{k+1}(\rho_0)$, then goes once around $\partial D_{k+1}$, and finally closes up at $(1,0)$ by traversing $\phi^{k+1}(\rho_0)$ backwards. While $\phi^k(\alpha)$ can be isotoped to lie outside of $A_1,$ it gets `stuck' on all $D_i$ with $|i| < n$ whose centers $z_i$ have arguments between the arguments of $z_{n-1}$ and $z_n$. From this description, one can check that the limit set of $\alpha$ up to an isotopy consists of all the lines $l_\delta\cup \{0\}\cup l_{\delta+\lambda}$, where $\delta\in\R\setminus \{k\lambda\mid k\in\Z\}$ and $l_\delta$ is defined as in the last paragraph. In either case, we see that the limit set is not finite.
\end{Remark}

\subsection{Rotation number via the DTC and irrational rotation behavior}
Note that for this subsection we no longer restrict to surfaces without boundary.
Let $\Sigma$ be a surface and $g$ a mapping class of $\Sigma$ with one of the following chosen: a compact boundary component that is fixed by $g$ (in which case we assume $g\in \G(\Sigma,\partial)$, where $\partial$ is the chosen boundary component) or a fixed isolated planar end or a fixed point of $g$. {By \emph{a fixed point} of $g$, we mean the equivalence class $[(f,p)]$ of a choice of pair $(f,p)$, where $p$ is a point in the interior of $\Sigma$ and $f$ is a diffeomorphism in the mapping class $g$ that fixes $p$,} considered up to the following notion of equivalence: $(f,p)$ and $(f',q)$ are equivalent if there is a self-diffeomorphism $h$ of $\Sigma$ such that $h^{-1}\circ f\circ h=f'$ and $h(q)=p$.


{Let $[(f,p)]$ be a fixed point of $g$. Then a choice of representative $(f,p)$} induces a mapping class
$g_{(f,p)}=[f|_{\Sigma\setminus\{p\}}]$ of $\Sigma\setminus\{p\}$,
for which $p$ is naturally identified with a fixed isolated planar end. 
We also note that any $\phi$ with a fixed isolated planar end $e$ provides a $g_e\in \G(\Sigma_e,\partial_e)$, where $\Sigma_e$ is obtained by removing an open once punctured disk from $\Sigma$ with the puncture being $e$ and $\partial_e$ is the new boundary component, that is well-defined up to composing with Dehn twists along $\partial_e$: $g_e$ is defined as the restriction of a representative of $g$ that is the identity on the once punctured disk $\Sigma\setminus\Sigma_e$.

\begin{Definition}\label{Def:rotnr}
Let $g$ be a mapping class of a surface $\Sigma$ with either a chosen boundary component that is fixed, or a fixed isolated planar end, or a fixed point, in all cases denoted by $m$.
We define the rotation number $r(g,m)$ of $g$ at $m$ in terms of the DTC $\omega$ as follows: $r(g,m)\coloneqq$
\[\left\{ \begin{array}{lr}\fd(g)+\Z&\text{if $m$ is a boundary $\partial$ and }\fd\colon \G(\Sigma,\partial)\to\R\\
\fd(g_e)+\Z&\text{if $m$ is an isolated planar end $e$ and }
\fd\colon \G(\Sigma_e,\partial_e)\to\R
\\
\fd((g_{(f,p)})_p)+\Z&\text{if $m$ is a fixed point $[(f,p)]$ and }
\fd\colon \G((\Sigma\setminus\{p\})_p,\partial_p)\to\R
\end{array}\right.\]
We say $g$ has $\emph{irrational rotation behavior}$ at $m$ if $r(g,m)\notin\Q/\Z\subset\R/\Z$.
\end{Definition}

With all of this setup, we easily check that the irrational rotations and wagon wheel maps do indeed exhibit irrational rotation behavior. The proofs of the following two propositions are essentially immediate from Definition \ref{Def:rotnr} and the calculation of the DTC as done in the proof of Theorem \ref{thm:allofRprecise}.

\begin{prop}\label{prop:irrbehforirrarots}Let $\Sigma$ be one of the surfaces from Theorem~\ref{thm:allofRprecise}\eqref{it:a} or one of said surfaces with a closed disk or a punctured closed disk glued to $\partial$. Let $g_r$ be the irrational rotation $r\in\R/\Z\setminus \Q/\Z$ on it, i.e.~the result of extending an irrational rotation with rotation number $r$ to $\Sigma$. Then $r(g_r, m)=r\not\in \Q/\Z$, where $m$ is the boundary $\partial$ (in the case the surface is as in Theorem~\ref{thm:allofRprecise}\eqref{it:a}), a fixed point of $g_r$ inside the disk glued to $\partial$ (in the case we glued a disk to $\partial$), or the puncture of the punctured disk that was glued to $\partial$ (in the case we glued a punctured disk to $\partial$). \qed 
\end{prop}

\begin{prop}\label{prop:irrbehforwagonwheels}Let $\Sigma$ be one of the surfaces from Theorem~\ref{thm:allofRprecise}\eqref{it:b} or one of said surfaces with a closed disk or a punctured closed disk glued to $\partial$ (for example, any of the surfaces from Section~\ref{subsec:constr}). Let $\phi$ be the wagon wheel map with rotation $\lambda\in\R\setminus \Q$ on it, i.e.~the result of extending the wagon wheel map from $\Sigma_P$ to $\Sigma$. Then $r(\phi, m)=\lambda+\Z\not\in \Q/\Z$, where $m$ is the boundary $\partial$ (in the case the surface is as in Theorem~\ref{thm:allofRprecise}\eqref{it:b}), a fixed point of $\phi$ inside the disk glued to $\partial$ (in the case we glued a disk to $\partial$), or the puncture of the punctured disk that was glued to $\partial$ (in the case we glued a punctured disk to $\partial$).\qed 
\end{prop}

\section{Other notions of positivity and total orders on mapping class groups}\label{sec:totalorders}

We briefly discuss other choices of positive elements  $P\subseteq \G(\Sigma)$ that define relations comparable to the ones considered in Section~\ref{sec:MCG}.
In particular, we mention that by considering well-orders on sufficiently large sets of homotopy classes of arcs, one can construct total left-orders on $\G(\Sigma)$.


In Section \ref{sec:MCG}, we found that any non-empty subset $\Gamma\subseteq 
\mathfrak{I}$ determines a relation by defining $P_\Gamma\subseteq \G(\Sigma)$ to be the maps $g$ which send all $I\in\Gamma$ to the right, i.e.~$g(I)\geq I$ 
and by defining $g\preceq_{\Gamma} f$ if $g^{-1}f\in P_\Gamma$. All these relations are comparable; in fact $P_\Gamma\subseteq T P_{\Gamma'}$ for all non-empty subsets $\Gamma, \Gamma'\subseteq \mathfrak{I}$.

We now go towards having not only left-invariant relations but left-invariant total orders. Everything that follows is in analogy to the Dehornoy order for the braid group on $n$ strands, which we will use as an illustrative example throughout the section.
By the well-ordering theorem, we can and do choose a well-order $\lessdot$ on any $\Gamma\subseteq 
\mathfrak{I}$. Recall that a well-order $\lessdot$ is a strict total order where any non-empty subset has a minimal element with respect to $\lessdot$. 
For example, in case of the braid group our surface is the $n$-punctured unit disk with the punctures lined up on the real line, our $\Gamma$ consists of the $n-1$ homotopy classes of arcs that are straight lines starting at $-i$ that cut the $n$-punctured disk into $n$ once punctured discs and we choose the well-ordering to be that an arc is smaller than another if its endpoint has smaller real part. 
 
We can now define
\[P_{\Gamma,\lessdot}\coloneqq \left\{g\in{\G(\Sigma)} \mid \exists I\in \Gamma: g(I)\neq I \text{ and }g(I_{min})\geq I_{min}\right\},\]
where $I_{min}$ is the smallest element with respect to $\lessdot$ in $\Gamma$ with $g(I_{min})\neq I_{min}$. In other words, an element $g$ is in $P_{\Gamma,\lessdot}$ if the first (with respect to $\lessdot$) $I\in\Gamma$ it moves is moved to the right. In the case of the braid group with the above choice of $\Gamma$ and $\lessdot$, $P_{\Gamma,\lessdot}$ is the positive cone of the (strict) Dehornoy order $\prec_{\mathrm{Deh}}$.

\begin{prop}\label{prop_mcgleftorder} If $\Gamma=
\mathfrak{I}$, then $P_{\Gamma,\lessdot}$ determines a total left-invariant order on $\G(\Sigma)$, i.e.~the relation $g\prec h \Leftrightarrow g^{-1}h\in P_{\Gamma,\lessdot}$ is a total order on $\G(\Sigma)$. 
\end{prop}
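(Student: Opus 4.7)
The plan is to verify the three standard conditions on the positive cone $P \coloneqq P_{\mathfrak{I},\lessdot}$: that $1 \notin P$, that $P$ is multiplicatively closed (giving transitivity of the induced strict relation), and that $\G \setminus \{1\} = P \sqcup P^{-1}$ (giving trichotomy). Left-invariance of the induced relation $\prec$ defined by $g \prec h \Leftrightarrow g^{-1}h \in P$ is immediate from the identity $g^{-1}h = (kg)^{-1}(kh)$ for every $k \in \G$. Also $1 \notin P$ is immediate since the definition of $P$ requires the existence of some $I \in \mathfrak{I}$ with $g(I) \neq I$, which fails for $g = 1$.

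The key topological input is the \emph{faithfulness} of the $\G$-action on $\mathfrak{I}$: every non-trivial $g \in \G$ moves some $I \in \mathfrak{I}$. For surfaces of finite type this follows from the Alexander method applied to a collection of arcs based at $x_0$; for surfaces of infinite type one can exhaust $\Sigma$ by finite-type subsurfaces containing $x_0$ and reduce to the finite-type case. Granted faithfulness, for every $g \neq 1$ the set $S_g \coloneqq \{I \in \mathfrak{I} \mid g(I) \neq I\}$ is non-empty, hence has a $\lessdot$-minimum $I_g$. Since $S_g = S_{g^{-1}}$, we have $I_g = I_{g^{-1}}$. By Lemma~\ref{lem:<=istotal} the order $\leq$ is total, so exactly one of $g(I_g) > I_g$ or $g(I_g) < I_g$ holds. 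In the former case $g \in P$; in the latter, applying $g^{-1}$ and using the $\G$-invariance of $\leq$ from Lemma~\ref{lem:<=istotal} yields $g^{-1}(I_g) > I_g$, so $g^{-1} \in P$. Exclusivity of $g \in P$ and $g^{-1} \in P$ is clear, so trichotomy holds.

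For multiplicative closure, let $g, h \in P$ with $\lessdot$-minimum moved arcs $I_g, I_h$, and without loss of generality assume either $I_g \lessdot I_h$ or $I_g = I_h$. For any $I \lessdot I_g$, both $g$ and $h$ fix $I$ (the latter since $I \lessdot I_g$ and $I_h$ is the $\lessdot$-minimum arc moved by $h$), so $gh$ fixes $I$. At $I_g$ itself: if $I_g \lessdot I_h$, then $h(I_g) = I_g$ so $(gh)(I_g) = g(I_g) > I_g$; if $I_g = I_h$, then $h(I_g) > I_g$, and applying $g$ via the $\G$-invariance of $\leq$ yields $(gh)(I_g) = g(h(I_g)) > g(I_g) > I_g$ by transitivity of $\leq$. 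In either case $I_g$ is the $\lessdot$-minimum arc moved by $gh$ and it is sent strictly to the right, so $gh \in P$.

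The main obstacle is thus the faithfulness of the $\G$-action on $\mathfrak{I}$; the rest is a formal consequence of Lemma~\ref{lem:<=istotal}, the well-ordering $\lessdot$, and the definition of $P_{\mathfrak{I},\lessdot}$.
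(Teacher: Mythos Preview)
Your proof is correct and follows essentially the same route as the paper's: verify that $P_{\mathfrak{I},\lessdot}$ is the positive cone of a strict total left-order by checking $P\cdot P\subset P$, $P\cap P^{-1}=\emptyset$, and $P\cup P^{-1}\cup\{1\}=\G$, with the last relying on faithfulness of the $\G$-action on $\mathfrak{I}$ via Alexander's method. Two small remarks: your ``without loss of generality'' in the multiplicative-closure step is not literally a symmetry (since $gh\neq hg$), so the case $I_h\lessdot I_g$ should be written out---it goes through identically using $g(I_h)=I_h$ and left-invariance of $\leq$; and the paper supplies a fuller justification of faithfulness (citing \cite{HernandezMoralesValdez19} and reducing to the case $\partial\Sigma=\partial$ via a capping argument), which you correctly flag as the main topological input but only sketch.
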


\begin{proof}
Recall that the choice of $P:=P_{\Gamma,\lessdot}$ defines a left-invariant relation $\prec$ on $\G(\Sigma)$ by $g\prec f$ if $g^{-1}f\in P$. To show that $\prec$ is a strict total order, we need to show that (1) $P\cdot P\subset P$ (transitivity), (2) $P\cap P^{-1}=\emptyset$ (asymmetry) and (3) $P\cup P^{-1}\cup \{[\mathrm{id}]\}=\G(\Sigma)$ (totality). 

To show (2), take $g\in P$ and let $I_{min}\in \Gamma$ be such that $g(I_{min})>I_{min}$ and $g(I)=I$ for all $I\in\Gamma$ with $I\lessdot I_{min}$. Since the relation $\leq$ on $\mathfrak{I}$ is left-invariant under the $\G(\Sigma)$ action, we have $g^{-1}(I_{min})<I_{min}$ and $g^{-1}(I)=I$ for all $I\in\Gamma$ with $I\lessdot I_{min}$; hence, $g^{-1}$ is not in $P$.

To show (1) let $f$ and $g$ be in $P$, we want to show that $g\circ f$ is in $P$. Let $I_{min}$ be the smallest element in $\Gamma$ that $f$ moves (to the right), and $J_{min}$ be the smallest element in $\Gamma$ that $g$ moves (to the right). If $I_{min}=J_{min}$ there is nothing to show; suppose $J_{min}\lessdot I_{min}$. All $K\lessdot I_{min}$ are fixed by $f$, in particular $J_{min}$ is fixed.
Hence, any arc smaller than $J_{min}$ is fixed by $g\circ f$ and $g\circ f(J_{min})>J_{min}$, which means $g\circ f\in P$.  Now suppose that $I_{min}\lessdot J_{min}$. We have $g(f(K))=g(K)=K$ for $K\lessdot I_{min}\lessdot J_{min}$ and (by left-invariance) $g(f(I_{min}))>g(I_{min})=I_{min}$, which means $g\circ f\in P$.

To show (3) we claim that every non-trivial element in $\G(\Sigma)$ must move some element in $\Gamma$; therefore, it is in $P$ or $P^{-1}$. This follows by Alexander's method, which works for surfaces of infinite type; see e.g.~\cite{HernandezMoralesValdez19}. We provide a somewhat detailed argument.
To see the claim, pick $\phi$ in $\G(\Sigma)$ with $\phi(I)=I$ for all $I\in\Gamma$. We argue that $\phi$ is isotopic to the identity. 

First note that since for all boundary components $\partial'$, $\phi$ in particular fixes all $I$ leaving from the chosen basepoint $x_{0} \in \partial$ and terminating on $\partial'$, $\phi$ must fix all boundary components of $\Sigma$ setwise (even pointwise for those boundary components that remain fixed in the definition of $\G(\Sigma)$). Hence, we can choose a representative of $\phi$ that fixes all boundary components pointwise and we may and do assume $\G(\Sigma)=\G(\Sigma,\partial \Sigma)$. 
Removing all boundary components except $\partial$, we find a new surface and a mapping class on it, and we note that $\phi$ is the trivial element if and only if this new mapping class on this new surface is (because $\phi$ can be reconstructed by considering $I$ starting on $\partial$ and ending on a boundary component of ones choice). Therefore, we may and do assume that $\partial\Sigma=\partial$.
Now, we glue to the boundary component $\partial$ of $\Sigma$ a punctured disk; call this new surface $\Sigma'$ and denote by $p$ the puncture replacing the boundary component $\partial$. We extend $\phi$ to a map $\phi'$ on $\Sigma'$ via the identity on the punctured disk. As in \cite[Proof of Lemma 4.6]{schaffer2022automorphisms}, and switching the point of view to thinking of $p$ as a marked point contained in $\Sigma'$ rather than an isolated end, note that, since $\phi'$ fixes the isotopy classes of all loops based at $p$, it must also fix all isotopy classes of simple closed curves on $\Sigma'$; therefore, $\phi'=[\mathrm{id}_{\Sigma'}]$. As the kernel of the inclusion of $\G(\Sigma)$ into the mapping class group of $\Sigma'$ consists of powers of $T$, we know that $\phi$ is a power of $T$, and, since only the trivial power of $T$ fixes the elements of $\mathfrak{I}$, we conclude $\phi=[\mathrm{id}_\Sigma]$.
\end{proof}

\begin{Remark} In \cite{calegari2004circular}, Theorem A, Calegari proves that the mapping class group of the disk with any compact, totally disconnected subset removed (the primary example being the Cantor set) is left-orderable, a case of Proposition~\ref{prop_mcgleftorder}. He constructs Thurston-type orders \cite{Short_OrderingsmappingclassgroupsafterThurston} analogous to those that exist for surfaces of finite type with boundary, and in particular uses the hyperbolic structure of the universal cover. It is understood by experts in big mapping class groups that the same method works to prove that all surfaces of infinite type with at least one compact boundary component are left-orderable. Our proof of Proposition \ref{prop_mcgleftorder} does not use hyperbolic geometry. Readers interested in orders on $\G(\Sigma)$ will also find the action on the circle for $\G(\Sigma)$ of surfaces that fix at least one isolated end constructed by Bavard and Walker in~\cite[Lemma 5.5.1]{bavard2018two} relevant. In fact, their construction implies that $\G(\Sigma)$ of surfaces that fix at least one isolated end are circularly orderable. We give an independent proof of this as it is an immediate corollary of Proposition~\ref{prop_mcgleftorder}.

\end{Remark}
\begin{corollary}[{\cite[Lemma~5.5.1]{bavard2018two}}]\label{cor:cord} The subgroup of every mapping class group given by all elements that fix a given isolated end is circularly orderable. 
\end{corollary}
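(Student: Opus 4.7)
The plan is to realize this subgroup as a quotient of one of the left-orderable groups supplied by Proposition~\ref{prop_mcgleftorder} by a central cofinal cyclic subgroup, and then invoke the classical fact that such a quotient inherits a left-invariant circular order.

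Let $H$ denote the subgroup of the given mapping class group of $\Sigma$ consisting of elements that fix a given isolated planar end $e$. The first step is to replace $e$ by a compact boundary component $\partial_e$ to obtain a surface $\Sigma_e$, and then appeal to the standard capping exact sequence
\[
1 \longrightarrow \langle T \rangle \longrightarrow \G(\Sigma_e, \partial_e) \longrightarrow H \longrightarrow 1,
\]
where $T$ is the Dehn twist about $\partial_e$, and where $\G(\Sigma_e, \partial_e)$ is taken to match the boundary-fixing conventions of the original mapping class group on all boundary components other than $\partial_e$. Surjectivity follows by isotoping a representative of an element of $H$ so that it is the identity in a small neighborhood of $e$ and then restricting to $\Sigma_e$; that the kernel is $\langle T \rangle \cong \Z$ reflects the standard fact that after collapsing $\partial_e$ back to the marked point $e$ a Dehn twist about a curve bounding a once-marked disc is trivial, together with the observation that $\fd(T)=1$ forces $T$ to have infinite order.

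By Proposition~\ref{prop_mcgleftorder} applied to $(\Sigma_e, \partial_e)$, the group $\G(\Sigma_e, \partial_e)$ admits a left-invariant total order $\prec$ whose positive cone is $P_{\mathfrak{I}, \lessdot}$ for some well order $\lessdot$ on $\mathfrak{I}$. The element $T$ is central, and I would check that $T$ is cofinal as follows. Let $I_0$ denote the $\lessdot$-smallest element of $\mathfrak{I}$. For any $g \in \G(\Sigma_e, \partial_e)$, Lemma~\ref{lem:fulltwistcomparison} together with left-invariance of $\leq$ under the $\G$-action implies that the sequence $\bigl(g^{-1}T^n(I_0)\bigr)_{n \in \Z}$ is strictly $\leq$-increasing, and Lemma~\ref{lem:manytwistincreasei} combined with Lemma~\ref{lemma:intersectionsign} shows that this sequence is unbounded both above and below with respect to $\leq$. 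In particular, for $n$ sufficiently large we have $g^{-1}T^n(I_0) > I_0$, so $g^{-1}T^n \in P_{\mathfrak{I}, \lessdot}$ and hence $T^n \succ g$; symmetrically $T^{-n} \prec g$ for $n$ sufficiently large.

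The proof then concludes by invoking the classical construction: given any left-ordered group $(G, \prec)$ with a central cofinal cyclic subgroup $\langle T \rangle$, each coset of $\langle T \rangle$ has a unique representative in the fundamental interval $\{h \in G \mid 1 \preceq h \prec T\}$, and the restriction of $\prec$ to this interval induces a canonical left-invariant circular order on $G/\langle T \rangle$. Applied to $\G(\Sigma_e, \partial_e)$ and its central cofinal subgroup $\langle T \rangle$, this yields the desired circular order on $H$. I expect the only step requiring genuine topological input to be the cofinality of $T$; the capping exact sequence and the linear-to-circular quotient construction are both standard.
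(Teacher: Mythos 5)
Your proof is correct and follows essentially the same route as the paper: both realize the subgroup fixing the isolated end as the quotient $\G(\Sigma_e,\partial_e)/\langle T\rangle$ via the capping exact sequence, apply Proposition~\ref{prop_mcgleftorder} to get a left-invariant total order on $\G(\Sigma_e,\partial_e)$, note that $T$ is central and cofinal for that order, and then pass the order to the quotient using the standard construction of a circular order from a left-order with a central cofinal cyclic subgroup. The only difference is one of emphasis---you spell out the cofinality check using Lemmas~\ref{lem:fulltwistcomparison}, \ref{lem:manytwistincreasei}, and \ref{lemma:intersectionsign} (which the paper simply asserts), while the paper spells out the quotient circular-order construction explicitly with a citation to \cite{ba2021circular}.
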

\begin{proof} Let $\Sigma$ be a surface with a specified end $e$. Let $G$ denote the subgroup of the mapping class group of $\Sigma$ given by all elements that fix the given isolated end $e$.
Consider the central extension $\Z\to \G(\Sigma',\partial)\to G$, where $\Sigma'$ denotes the surface obtained by replacing $e$ by a boundary component, i.e.~$\Sigma'\coloneqq\Sigma\setminus D^{\circ}_\epsilon$ (here $D^{\circ}_\epsilon$ denotes an open punctured disk representative of the end $e$), where the new boundary component is chosen as $\partial$. In particular, $\G(\Sigma',\partial)/\langle T\rangle\cong G$, where $T$ is the positive Dehn twist about $\partial$. Noting that $T$ is cofinal with respect to the total left-order $\prec$ with positive cone $P_{\Gamma,\lessdot}$ (which is a total left-order by Propostion~\ref{prop_mcgleftorder}), we get a cyclic left-order $c$ on $\G(\Sigma',\partial)/\langle T\rangle$ by defining $c(a_1,a_2,a_3)=0$ if the $a_i$ are not pairwise distinct and $c(a_1,a_2,a_3)=\mathrm{sign}(\sigma)$ otherwise, where $\sigma$ is a permutation defined as follows. Let $g_i$ be the unique element in the coset $a_i$ with $1\preceq g_i \prec T$ and $\sigma$ is the unique permutation such that $g_{\sigma(1)}\prec g_{\sigma(2)} \prec g_{\sigma(3)}$; see for instance \cite[Construction 2.2]{ba2021circular} for details.
\end{proof}

While the total left-orders that arise (e.g.~by setting $\Gamma=\mathfrak{I}$) might be the most interesting, all the relations defined by $g\prec f$ if $g^{-1}f\in P_{\Gamma,\lessdot}$, for any non-empty $\Gamma$ and any well-order $\lessdot$, are comparable. Indeed, one can check that each of them is comparable to the relation defined by $\{J\}$, where $J$ is the smallest element in $\Gamma$. Hence, they all can be used to characterize the DTC.
Concretely, we find the following variant of Proposition~\ref{prop:charFDTCforSurfaces}.

\begin{prop}\label{prop:charFDTCforSurfacestotal}
For every non-empty subset $\Gamma$ of $\mathfrak{I}$ and every well-order  $\lessdot$ on $\Gamma$, 
there exists a unique homogeneous quasimorphism $\fd\colon \G(\Sigma)\to \R$  with $\fd(T)=1$ such that
$\fd(P_{\Gamma,\lessdot})$ is bounded below.
Furthermore, $\fd$ is the same for all such non-empty subsets $\Gamma$ and well-orders $\lessdot$ and it satisfies $\fd(P_{\Gamma,\lessdot})\subseteq [0,\infty)$ for all of them. In fact, $\fd$ is the DTC of $\G(\Sigma)$. 
\end{prop}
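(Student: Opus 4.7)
The plan is to deduce Proposition~\ref{prop:charFDTCforSurfacestotal} from Proposition~\ref{prop:charFDTCforSurfaces} by showing that the left-invariant relation on $\G$ determined by $P_{\Gamma,\lessdot}$ is comparable (in the sense of Remark~\ref{rmk:setvsinvR}) to the relation determined by $P_J$, where $J$ is the $\lessdot$-minimum of $\Gamma$. The latter exists because $\lessdot$ is a well-order, and this is the one crucial use of well-ordedness (as opposed to merely a strict total order): it singles out a distinguished interval $J$ against which everything can be measured.

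First, I would verify the inclusion $P_{\Gamma,\lessdot}\subseteq P_J$. Given $g\in P_{\Gamma,\lessdot}$, let $I_{\min}\in \Gamma$ be the smallest element (with respect to $\lessdot$) moved by $g$. Since $J\lessdot I$ for every $I\in\Gamma\setminus\{J\}$, either $J=I_{\min}$ (in which case $g(J)>J$ by the definition of $P_{\Gamma,\lessdot}$) or $J\lessdot I_{\min}$ (in which case $g$ fixes $J$). In both cases $g(J)\geq J$, so $g\in P_J$. Next I would check $T\cdot P_J\subseteq P_{\Gamma,\lessdot}$: for $g\in P_J$ we have $g(J)\geq J$, so by Lemma~\ref{lem:fulltwistcomparison} we get $Tg(J)>J$. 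Since $J$ is the $\lessdot$-minimum of $\Gamma$, any element of $\Gamma$ strictly smaller than the smallest element moved by $Tg$ is vacuous, and $Tg$ does move $J$ (to the right), so $Tg\in P_{\Gamma,\lessdot}$. These two inclusions together give comparability with constant $K=1$.

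Given comparability, Remark~\ref{rmk:setvsinvR} tells us that the hypotheses of Proposition~\ref{prop:uniqunessviapositivityongroups}---property~\eqref{(T)}, transitivity, and the Archimedean property---transfer between the two relations, and that a homogeneous quasimorphism satisfies~\eqref{item:i} and~\eqref{item:ii} for one iff it does for the other. Since Proposition~\ref{prop:charFDTCforSurfaces} provides the DTC $\fd$ as the unique homogeneous quasimorphism with $\fd(T)=1$ such that $\fd(P_J)$ is bounded below, the same $\fd$ is the unique such quasimorphism for $P_{\Gamma,\lessdot}$. To upgrade ``bounded below'' to $\fd(P_{\Gamma,\lessdot})\subseteq[0,\infty)$, just combine the inclusion $P_{\Gamma,\lessdot}\subseteq P_J$ with the analogous sharper conclusion $\fd(P_J)\subseteq[0,\infty)$ already established in Proposition~\ref{prop:charFDTCforSurfaces}.

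The proof is essentially mechanical once the comparability step is in place, so there is no substantial obstacle. The only point requiring care is recognizing that a well-order (rather than a mere total order on $\Gamma$) is exactly what allows one to extract the minimum $J$; without this, the sets $P_{\Gamma,\lessdot}$ are not even well-defined. Everything else is a formal consequence of the algebraic framework of Section~\ref{sec:qmviafloors} and the topological input of Section~\ref{sec:annular} already used in the proof of Proposition~\ref{prop:charFDTCforSurfaces}.
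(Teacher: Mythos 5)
Your proposal is correct and follows essentially the same route as the paper: establish comparability of $P_{\Gamma,\lessdot}$ with a $P_I$-relation (the paper states this tersely; you supply the inclusions $P_{\Gamma,\lessdot}\subseteq P_J$ and $T\cdot P_J\subseteq P_{\Gamma,\lessdot}$ explicitly), invoke Proposition~\ref{prop:charFDTCforSurfaces}, and then upgrade to $\fd(P_{\Gamma,\lessdot})\subseteq[0,\infty)$ by observing that elements of $P_{\Gamma,\lessdot}$ lie in some $P_I$. The only cosmetic difference is that you fix $J$ to be the $\lessdot$-minimum of all of $\Gamma$ while the paper uses, for each $g$, the element $I_{\min}(g)$ it first moves; both yield the same conclusion.
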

\begin{proof}
All the relations $P_{\Gamma,\lessdot}$ are comparable to (one and hence all) the relations defined by non-empty $\Gamma\subset \mathfrak{I}$ as considered in Proposition~\ref{prop:charFDTCforSurfaces}. Hence, for all choices of $\Gamma$ and $\lessdot$ we have (by Proposition~\ref{prop:charFDTCforSurfaces}) that $\fd$ is the unique homogeneous quasimorphism with $\fd(T)=1$ such that
$\fd(P_{\Gamma,\lessdot})$ is bounded below. It remains to show that $\fd(P_{\Gamma,\lessdot})\geq 0$.
If $g\in P_{\Gamma,\lessdot}$, then there exist an $I\in \mathfrak{I}$ with $g(I)\geq I$.
Then $g\in P_{I}$; hence, $\fd(g)\in \fd(P_I)\subseteq [0,\infty)$.
\end{proof}
Proposition~\ref{prop:charFDTCforSurfacestotal} is a true generalization of Proposition~\ref{prop:CharPropForFDTC}---Malyutin's result for braids---since, for the choice of $\Gamma$ and $\lessdot$ as described at the beginning of this section, the corresponding relation $\prec$ is the Dehornoy order (under an identification of $\G(\Sigma)$ with $B_n$ that maps $T$ to $\Delta^2$).

 \bibliographystyle{alpha} \bibliography{peterbib}
\end{document}